\documentclass[a4paper, final]{amsart}

\usepackage[numbers, square]{natbib}
\usepackage[utf8]{inputenc}
\usepackage[T1]{fontenc}
\usepackage[english]{babel}
\usepackage{amsmath,amssymb,amsthm,amsfonts,amsopn}
\RequirePackage[colorlinks=true,citecolor=blue,urlcolor=blue]{hyperref}
\usepackage[author={Freddie}]{fixme}
\FXRegisterAuthor{mw}{amw}{Winkel}
\usepackage{graphicx, tikz, tikz-cd, enumerate}
\usetikzlibrary{arrows, decorations.pathreplacing}
\fxsetup{theme=color}
\fxusetargetlayout{colorcb}
\definecolor{oxfordblue}{RGB}{4,30,66}
\usepackage[shortlabels]{enumitem}
\setlist[enumerate]{label=(\roman*)}
\usetikzlibrary{shapes,backgrounds,positioning}

\theoremstyle{plain}
\newtheorem{thm}{Theorem}[section]
\newtheorem{lemma}[thm]{Lemma}
\newtheorem{prop}[thm]{Proposition}
\newtheorem{cor}[thm]{Corollary}

\theoremstyle{definition}
\newtheorem{defi}[thm]{Definition}

\newtheorem{rem}[thm]{Remark}
\newtheorem{nota}[thm]{Notation}

\newtheorem*{conjecture*}{Conjecture}

\newcommand{\tree}[1][t]{\boldsymbol{#1}}
\newcommand{\that}[1][t]{\hat{\boldsymbol{#1}}} 

\newcommand{\treesigma}[1][\sigma]{\boldsymbol{#1}}
\newcommand{\That}[1][T]{\widehat{#1}}
\newcommand{\Thatspace}[1][\T]{\widehat{\boldsymbol{#1}}} 
\newcommand{\T}{\mathbb{T}}
\DeclareMathOperator{\edge}{edge}

\DeclareMathOperator{\vertices}{vert}

\DeclareMathOperator{\insertable}{ins}
\DeclareMathOperator{\branchpoints}{bp}
\DeclareMathOperator{\id}{id}
\DeclareMathOperator{\bin}{bin}
\DeclareMathOperator{\intstruct}{\textsc{int}}
\DeclareMathOperator{\rank}{\textsc{rank}}

\newcommand{\insertablef}[1][\tree]{\insertable({\tree[#1]})}

\DeclareMathOperator{\tildei}{\tilde{\textit{\i}}}
\DeclareMathOperator{\crp}{CRP}
\DeclareMathOperator{\ocrp}{oCRP}
\DeclareMathOperator{\dirmult}{DM}

\DeclareMathOperator{\betabin}{BetaBin}


\newcommand\independent{\protect\mathpalette{\protect\independenT}{\perp}}
\def\independenT#1#2{\mathrel{\rlap{$#1#2$}\mkern2mu{#1#2}}}


\newcommand{\nin}{{n \in \mathbb{N}}}


\newcommand{\deq}{\stackrel{D}{=}}

\newcommand{\dcon}{\xrightarrow{D}}


\renewcommand{\SS}{\mathbb{S}}

\newcommand{\XX}{\mathbb{X}}
\newcommand{\YY}{\mathbb{Y}}


\renewcommand{\P}{\mathrm{P}}

\newcommand{\N}{\mathbb{N}}

\newcommand{\R}{\mathbb{R}}




\DeclareMathOperator{\Unif}{Unif}




\begin{document}

\title{A down-up chain with persistent labels on multifurcating trees}

\address{\hspace{-0.42cm}Frederik~S{\o}rensen\\ Department of Statistics\\ University of Oxford\\ 24--29 St Giles'\\ Oxford OX1 3LB, UK\\ Email: frederik.soerensen@seh.ox.ac.uk}             

\author{Frederik S{\o}rensen}    

\keywords{Markov chains on multifurcating trees, tree growth processes, down-up chain, intertwining, planar trees, R\'emy tree growth, trees with edge-weights, Aldous diffusion}

\subjclass[2010]{60C05, 60J80, 60J10}

\date{\today}

\begin{abstract}
  In this paper, we propose to study a general notion of a down-up Markov chain for multifurcating trees with $n$ labelled leaves.
  We study in detail down-up chains associated with the $(\alpha, \gamma)$-model of Chen et al. (2009), generalising and further developing previous work by Forman et al. (2018, 2020) in the binary special cases.
  The technique we deploy utilizes the construction of a growth process and a down-up Markov chain on trees with planar structure.
  Our construction ensures that natural projections of the down-up chain are Markov chains in their own right.
  We establish label dynamics that at the same time preserve the labelled alpha-gamma distribution and keep the branch points between the $k$ smallest labels for order $n^2$ time steps for all $k \geq 2$.
  We conjecture the existence of diffusive scaling limits generalising the ``Aldous diffusion’’ by Forman et al. (2018+) as a continuum-tree-valued process and the ``algebraic $\alpha$-Ford tree evolution’’ by L\"{o}hr et al. (2018+) and by Nussbaumer and Winter (2020) as a process in a space of algebraic trees.
\end{abstract}

\ \vspace{-24pt}

\maketitle

\section{Introduction}\label{sec:introduction}
Let $[n] := \{1, \ldots, n\}$.
An $[n]$-tree is a (non-planar) rooted, multifurcating tree with $\nin$ leaves labelled by $1, \ldots, n$, an additional vertex of degree $1$ called the root, and in which the degree of any other vertex, called a branch point, will be at least $3$.
We will denote the space of such trees by $\T_{[n]}$.
Occasionally, we will only allow binary trees, also called $n$-leaf cladograms, and will denote this space by $\T_{[n]}^{\bin}$.
The \textit{Aldous chain}~\cite{RefWorks:doc:5b4cbc14e4b04428cc72cf41,RefWorks:doc:5b4cbc43e4b0185a9132aee9} is a Markov chain on $\T_{[n]}^{\bin}$ with a transition kernel consisting of a down-step followed by an up-step defined as follows:
\begin{itemize}[leftmargin=1in]
  \item[\textit{Down-step}:] Delete a leaf uniformly at random and contract the parent branch point away.
  \item[\textit{Up-step}:] Insert a new leaf with the same label by selecting an edge uniformly at random, inserting a new branch point on that edge and attaching the new leaf onto that branch point.
\end{itemize}
It is natural to extend the above type of Markov chain by introducing alternative ways of deleting and inserting leaves as well as by extending it to the space of multifurcating trees.
We will refer to any Markov chain on $\T_{[n]}$ (or a subset thereof) as a \textit{down-up chain} if its transition kernel can be decomposed into a down-step where a random (but not necessarily uniformly distributed) leaf is deleted, and an up-step where a new leaf is inserted into a randomly picked edge or branch point.

Noting that the up-step described above is identical to one step of R\'{e}my's growth process~\cite{RefWorks:doc:5b71b380e4b06c0731a629f4}, a specific problem is to define new down-up chains where the up-step is governed by other growth processes from the literature.
For our purposes Ford's $\alpha$-model for binary trees~\cite{RefWorks:doc:5b76ce32e4b0820c421f301d}, extending R\'{e}my's growth process~\cite{RefWorks:doc:5b71b380e4b06c0731a629f4}, and the $(\alpha, \gamma)$-growth rule for multifurcating trees~\cite{RefWorks:doc:5b4cbb5fe4b02dc0c79270af}, of which both Ford's $\alpha$-model and Marchal's stable tree growth process~\cite{RefWorks:doc:5b6c561fe4b06c0731a5c558} are special cases, are of highest importance.
The $(\alpha, \gamma)$-growth process for $0 \leq \gamma \leq \alpha \leq 1$, is characterized by, for each $\nin$, obtaining an element of $\T_{[n+1]}$ from an element $\tree \in \T_{[n]}$ by inserting a new leaf labelled by $n+1$ into an \textit{insertable part} of $\tree$, denoted by $\insertablef$, where an insertable part $x \in \insertablef$ is picked with probability proportional to an associated weight, $w_x$, with
\begin{align}
\label{eq:alphagamma_weights}
    w_x
    = 
    \begin{cases}
      1 - \alpha & \text{if $x$ is a leaf edge,} \\
      \gamma & \text{if $x$ is an internal edge,} \\
      \left( c_x - 1 \right) \alpha - \gamma & \text{if $x$ is a branch point with $c_x$ children.} \\
    \end{cases}
  \end{align}

It is rather natural to construct a down-step such that the stationary distribution of the down-up chain is that induced by the growth process that governs the up-step.
To be more precise, let ${\left( T_n \right)}_{\nin}$ denote a growth process.
We are then looking to devise a down-step, such then when carrying out the down-step from $T_n$ we obtain $T_n^\downarrow$ with the property that $T_n^\downarrow \deq T_{n-1}$ for each $n \geq 2$.
Combinining the down-step with the up-step from the growth process yields a stationary down-up Markov chain, ${\left( T_n(m) \right)}_{m \in \N_0}$, if $T_n(0) \deq T_n$.
If the leaf labels are exchangeable, i.e.\ if for any permutation $\sigma$ of $[n]$ the probability of obtaining an $[n]$-tree, $\tree$, is the same as the probability of obtaining $\tree$ where leaf $i$ is labelled by $\sigma(i)$ for each $i \in [n]$, a down-step with a simple uniform deletion of the leaves will suffice.
However, in the absence of exchangeability the situation is more complicated.

In~\cite{forman2018projections, RefWorks:doc:5b4cbc93e4b07f5746e47014} a modified version of the Aldous chain, the $\alpha$-chain, was introduced, where the up-step is governed by Ford's $\alpha$-model.
For $\alpha = \frac{1}{2}$, Ford's $\alpha$-model is the same as R\'{e}my's growth process, and so the leaf labels are exchangeable, but this is not true for $0 < \alpha \neq \frac{1}{2} < 1$, so a more complicated down-step was introduced:
\begin{enumerate}
  \item Down-step (selection): Select leaf $i$ uniformly at random.
  \item Down-step (local search): Defining $a$ and $b$ to be the smallest leaf labels in the first and second subtree on the ancestral line from $i$, respectively, let $\tildei := \max\left\{ i, a, b \right\}$.
  \item Down-step (swap and delete): Swap $i$ and $\tildei$, delete leaf $\tildei$, and contract away the parent branch point.
  \item Down-step (relabel): Relabel the leaves using the increasing bijection $[n] \setminus \{\tildei\} \to [n-1]$ to obtain an element of $\T_{n-1}^{\bin}$.
  \item Up-step: Insert leaf $n$ according to Ford's $\alpha$-growth rule~\cite{RefWorks:doc:5b76ce32e4b0820c421f301d} (the binary specification of the $(\alpha, \gamma)$-growth rule obtained for $\gamma = \alpha$).
\end{enumerate}
In the exchangeable case where $\alpha = \frac{1}{2}$, the relabelling in step (iv) is unnecessary and one can therefore adjust (v) accordingly and just insert the deleted leaf again, which yields the uniform chain from~\cite{RefWorks:doc:5b4cbc93e4b07f5746e47014}.

Irrespective of the exchangeability of the leaf labels, the above down-step serves a key purpose in a broader context which we will now outline.
It is well known that the stationary distribution of the Aldous chain, the uniform distribution on the space of binary trees~\cite{RefWorks:doc:5b4cbc14e4b04428cc72cf41}, converges when suitably scaled to that of the Brownian Continuum Random Tree~\cite{RefWorks:doc:5b72ecc3e4b0bc0f31737130}, as the number of leaves tends to infinity.
Aldous and Schweinsberg~\cite{RefWorks:doc:5b4cbc14e4b04428cc72cf41,RefWorks:doc:5b4cbc43e4b0185a9132aee9} showed that the relaxation time of the Aldous chain is of order $n^2$.
Aldous conjectured that running the chain $n^2$ times faster than the number of leaves tend to infinity would result in a “diffusion on continuum trees”~\cite{aldousweb}.

In a sequence of papers~\cite{forman2019diffusions, RefWorks:doc:5b720899e4b0fd36f5bb93a6, RefWorks:doc:5b720899e4b0874a74e5a439, forman2018aldous} the authors have carried out a programme to contruct this “Aldous diffusion”.
Parts of this construction arise by studying projections of the uniform chain onto decorated trees.
To be more precise, consider for an $[n]$-tree the reduced subtree spanned by the leaves labelled by $1$ and $2$, where all vertices of degree $2$ have been contracted away, and consider how many of the $n$ leaves are sitting in subtrees of the branch point and on each of the edges of that reduced tree (for a more precise description of this, see Definition~\ref{def:decoratedtrees}).
This will, for a binary tree, yield three masses summing to $n$.
For each $m \in \N_0$, let $T_n(m)$ denote the $m$th step of the uniform chain on $\T_{[n]}$, let $Y_n^0(m), Y_n^1(m), Y_n^2(m)$ denote the proportion of these masses when projecting $T_n(m)$ to a decorated $[2]$-tree of mass $n$.
In~\cite{RefWorks:doc:5b4cbb92e4b0bc982fe42f3a} a notion of a Wright-Fisher diffusion with negative mutation rates was developed, and it was shown that
\begin{align}\label{eq:WrightFisher-diffusion}
  {\left( Y_n^0(\lfloor n^2 t \rfloor), Y_n^1(\lfloor n^2 t \rfloor ) , Y_n^2(\lfloor n^2 t \rfloor ) \right)}_{t \geq 0}
  \dcon
  {\left( Y^0(t), Y^1(t), Y^2(t) \right)}_{t \geq 0}
\end{align}
as $n \to \infty$, stopped the first time one of the last two coordinates hits 0, where $\dcon$ denotes convergence in distribution and $(Y^0, Y^1, Y^2)$ is a Wright-Fisher diffusion with mutation rates $\left( \frac{1}{2}, -\frac{1}{2}, -\frac{1}{2} \right)$.
If we delete leaves uniformly at random, we will end up deleting the branch point due to one of the masses associated with the leaf edges dropping to $0$.
In the limit, the Wright-Fisher process does not provide any guidance as to how to continue the process.
But with the down-step of the $\alpha$-chain we have a mechanism by which we can identify another branch point and continue the process.
The two key aspects that make this possible, are the label swapping as well as the relabelling of the remaining leaf labels.

The aim of this paper is to study and generalize down-up chains with label swapping to the space of multifurcating trees.
Specifically, we will study chains where the up-step is governed by the $(\alpha, \gamma)$-growth process~\cite{RefWorks:doc:5b4cbb5fe4b02dc0c79270af}, which also encompasses the prime special case of the stable tree growth~\cite{RefWorks:doc:5b6c561fe4b06c0731a5c558}.
Whilst any $(\alpha, \gamma)$-growth process with $0 < \gamma \leq \alpha < 1$ has a continuum random tree as its scaling limit, the stable tree growth is of particular motivational interest due to the universality of the stable continuum random tree~\cite{RefWorks:doc:5b6c561fe4b06c0731a5c558}, and the fact that the stable tree growth induces exchangeable leaf labels.
Hence a down-up chain with an up-step governed by the $(\alpha, \gamma)$-growth process will be analogous to the Aldous and uniform chains, in terms of devising a “diffusive limit on the continuum trees”.
To envisage the construction of such a process on continuum trees it is of key importance to devise a down-step that allows the construction beyond a disappearing branch point, when projecting it to the space of decorated trees.
This is the key purpose of this paper.

\begin{defi}%
  \label{def:alphagamma_chain}
  Fix $\nin$.
  The $(\alpha, \gamma)$-\textit{chain on} $\T_{[n]}$, {${\left( T_n(m) \right)}_{m \in \N_0}$}, is a Markov chain on $\T_{[n]}$ with a transition kernel characterized as follows:
  \begin{enumerate}
    \item Conditional on $T_n(m) = \tree$, select leaf $I = i$ uniformly at random.
    \item Let $c_v$ denote the number of children of $v$, the parent of $i$ in $\tree$.
      \begin{itemize}[leftmargin=.5cm]
        \item If $c_v = 2$, let $\tilde{I} = \max \{i, a, b\}$ where 
          \begin{align*}
            &a = \min
            \begin{Bmatrix}
              \text{leaf labels in the first spinal bush} \\ \text{on the ancestral line from leaf}\ i
            \end{Bmatrix}, \\
            &b = 
            \min
            \begin{Bmatrix}
              \text{leaf labels in the second spinal bush} \\ \text{on the ancestral line from leaf}\ i
            \end{Bmatrix}.
          \end{align*}
        \item If $c_v > 2$, let $i_1, \ldots, i_{c_v}$ denote the smallest leaf labels in the $c_v$ subtrees of $v$, enumerated in increasing order, and say that $i = i_j$ for some $j \in [c_v]$.
          Define the conditional distribution of $\tilde{I}$ by
          \begin{align*}
            \P \left( \tilde{I} = i_{j^\prime} \ \middle \vert \ T_n(m) = \tree, I = i \right)
            =
            \begin{cases}
              \frac{\alpha - \gamma}{(c_v - 1)\alpha - \gamma} & \text{if}\ j \leq 2, j^\prime = 3 \\[0.1cm]
              \frac{\alpha}{(c_v - 1)\alpha - \gamma} & \text{if}\ j \leq 2, 3 < j^\prime \leq c_v \\[0.1cm]
              \frac{\alpha}{(c_v - 1)\alpha - \gamma} & \text{if}\ 2 < j < j^\prime \leq c_v \\[0.15cm]
              \frac{(c_v - 1 - j)\alpha - \gamma}{(c_v - 1)\alpha - \gamma} & \text{if}\ 2 < j = j^\prime \leq c_v
            \end{cases}
          \end{align*}
      \end{itemize}
    \item Swap $i$ and $\tilde{I}$, delete $\tilde{I}$.
    \item Relabel $\{\tilde{I} + 1, \ldots, n\}$ by $\{\tilde{I}, \ldots, n-1\}$ using the increasing bijection.
    \item Perform an up-step according to the $(\alpha, \gamma)$-growth process.
  \end{enumerate}
\end{defi}
Formal definitions of the terminology and operations used in Definition~\ref{def:alphagamma_chain} can be found in Sections~\ref{sec:treegrowthprocesses} and~\ref{sec:downupchains}.
\begin{thm}\label{thm:nonplanarstationarity}
  Fix $\nin$.
  Let ${\left( T_n(m) \right)}_{m \in \N_0}$ denote the $(\alpha, \gamma)$-chain and let $T_n$ denote the $n$th step of the $(\alpha, \gamma)$-growth process.
  Then $T_n(m) \dcon T_n$ as $m \to \infty$, where $\dcon$ denotes convergence in distribution.
\end{thm}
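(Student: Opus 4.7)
The plan is to use standard finite Markov chain convergence theory. Since $\T_{[n]}$ is finite, it suffices to show that the chain is irreducible, aperiodic, and has the law of $T_n$ as its stationary distribution. Irreducibility and aperiodicity are routine for $0 < \gamma \leq \alpha < 1$ because all weights in~\eqref{eq:alphagamma_weights} are strictly positive: the up-step~(v) reaches any tree in $\T_{[n]}$ from any tree in $\T_{[n-1]}$ with positive probability, and the chain has positive probability of returning to its current state in one step (delete a leaf and reinsert it at the same location).

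Stationarity is the substantive claim. Step~(v) is, by construction, precisely one step of the $(\alpha, \gamma)$-growth process and therefore carries the law of $T_{n-1}$ to the law of $T_n$. It is therefore enough to show that steps~(i)--(iv) carry the law of $T_n$ to the law of $T_{n-1}$. The natural way to establish this is to couple with the growth process by writing $T_n = G_n(T_{n-1}, X_n)$, where $X_n$ is, conditionally on $T_{n-1}$, an insertable part of $T_{n-1}$ drawn from the weights~\eqref{eq:alphagamma_weights}, and leaf $n$ is inserted at $X_n$. Under this coupling, one verifies that for a uniformly chosen leaf $I \in [n]$ the conditional distribution of $\tilde{I}$ specified in~(ii), followed by the swap, deletion and relabelling in~(iii)--(iv), produces an $[n-1]$-tree whose law agrees with that of $T_{n-1}$.

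The main obstacle is the algebraic bookkeeping: checking that the four cases in~(ii), together with the $c_v = 2$ rule, deliver the correct marginal after summing over all $(I, \tilde{I})$ pairs consistent with a given $T_n$. The cleanest route, and the one foreshadowed in the abstract, is to pass through the planar setting. One lifts the $(\alpha, \gamma)$-growth to a growth process on ordered trees, in which the branch-point weight $(c_v - 1)\alpha - \gamma$ is resolved into weights attached to the ``gaps'' between consecutive children, and the corresponding down-step on ordered trees is the trivial ``delete the most recently inserted leaf.'' Sampling consistency is automatic in the ordered setting, and projecting back to $\T_{[n]}$ yields the four fractional weights in~(ii) as the marginal probabilities that the last-inserted leaf sits in a particular position inside the parent branch point, relative to the sibling containing leaf $I$. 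The swap in~(iii) then precisely encodes which child of $v$ plays the role of that last-inserted sibling; with this identity in hand, steps~(i)--(iv) carry the law of $T_n$ to the law of $T_{n-1}$ and stationarity follows, so that $T_n(m) \dcon T_n$ by finite Markov chain convergence.
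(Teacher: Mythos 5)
Your overall architecture matches the paper's: reduce to finite-chain convergence (irreducibility/aperiodicity plus stationarity), observe that the up-step carries the law of $T_{n-1}$ to that of $T_n$, and handle the down-step by lifting to an ordered/planar setting in which the branch-point weight $(c_v-1)\alpha-\gamma$ is resolved into per-gap weights ($\alpha$ in each interior gap, $\alpha-\gamma$ at the right end). This is exactly the semi-planar machinery of Sections 3--5, and your description of how the four conditional probabilities in step~(ii) arise as positional probabilities of the subtree containing $\tilde I$ relative to the subtree containing $I$ is the content of Proposition~\ref{prop:nonplanar_alphagamma_chain} (via the $(\alpha,\alpha-\gamma)$-$\ocrp$ description in Lemma~\ref{lemma:internalstructure}).

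However, there is a genuine gap at the point you dismiss as ``automatic.'' The down-step of the chain does \emph{not} delete the most recently inserted leaf: it deletes $\tilde I$, which is determined by a local search from a uniformly chosen leaf $I$ and is in general neither leaf $n$ nor the least label of the last-inserted subtree. Sampling consistency of the growth process tells you that deleting leaf $n$ recovers the law of $T_{n-1}$; it tells you nothing, by itself, about deleting $\tilde I$ after a swap. The substance of the proof is Lemma~\ref{lemma:condind}: one must characterize the event $E_{i,\tildei}=\{\tildei=\max\{i,a,b\}\ \text{in}\ \That_n\}$ as the conjunction of (a) leaf $\tildei$ being inserted at a specific location at time $\tildei$ and (b) leaves $\tildei+1,\dots,n$ avoiding a specific set of ``forbidden'' insertable parts thereafter, and then verify that in every case the total weight of the locations involved is exactly $1$. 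Only this yields the two facts the argument actually needs: $E_{i,\tildei}\independent\That_{\tildei-1}$, and that conditionally on $E_{i,\tildei}$ the swapped-and-deleted tree has the law of $\That_{n-1}$ (because the forbidden parts carry away precisely the weight that leaf $\tildei$ would have contributed, so the remaining insertions still follow the $(\alpha,\gamma)$-rule). Summing over the pairs $(i,\tildei)$ as in the proof of Theorem~\ref{thm:planarstationarity} then gives stationarity. Your proposal names the bookkeeping as the obstacle but does not supply this weight-accounting identity, which is where the proof could actually fail if the search rule in~(ii) were chosen differently; as written, the claim that ``sampling consistency is automatic in the ordered setting'' asserts the conclusion rather than proving it. (A minor further point: the theorem allows boundary parameters such as $\gamma=0$ or $\alpha=1$, where the chain is not irreducible on all of $\T_{[n]}$ and one must instead argue via a unique recurrent communicating class, as in the proof of Theorem~\ref{thm:planarstationarity}.)
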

The significance of Theorem~\ref{thm:nonplanarstationarity} is that the unique invariant distribution for the $(\alpha, \gamma)$-chain is exactly the distribution induced by the $(\alpha, \gamma)$-growth process, proving that the down-step does indeed counteract the up-step governing the $(\alpha, \gamma)$-growth process.

We now turn our focus to the investigation of projections of the $(\alpha, \gamma)$-chain.
Specifically, we will project the $(\alpha, \gamma)$-chain on $\T_{[n]}$ to the space of decorated $[k]$-trees of mass $n$, a special case of which was introduced prior to~\eqref{eq:WrightFisher-diffusion}.
We will show that the projected chain is Markovian and characterize its transition kernel without referencing the $(\alpha, \gamma)$-chain.
There are several well-known results on when a function of a Markov chain is again Markovian.
As in~\cite{forman2018projections}, we will utilize both the Kemeny-Snell criterion~\cite{MR0115196} and the intertwining criterion~\cite{MR624684}, and we will need to use them in conjunction with one another by considering an intermediary Markov chain.

In general, a decorated $[k]$-tree of mass $n$ is a $[k]$-tree $\tree[s] \in \T_{[k]}$, called the \textit{tree shape}, where all branch points and edges have an affiliated non-negative integer mass such that any leaf edge has mass at least $1$ and the sum of all masses is $n$.
An $[n]$-tree naturally gives rise to such a decorated tree, by letting the $[k]$-tree be the reduced subtree of the $[n]$-tree spanned by the leaves labelled by $[k]$ where all branch points of degree $2$ are contracted away.
Comparing this reduced subtree to the original $[n]$-tree, the mass associated with an insertable part, i.e.\ an edge or a branch point, now arise by counting all the leaves for which the insertable part is the first insertable part of the reduced $[k]$-tree that is reached on the ancestral line from the leaf.
We denote the surjective map that yields a decorated $[k]$-tree of mass $n$ from an $[n]$-tree by $\pi_{[k]}^{\bullet n} \colon \T_{[n]} \to \T_{[k]}^{\bullet n}$.
We will discuss this in more detail in Section~\ref{sec:MC_decorated}.

In the following, let $K_n$ denote the transition kernel of the $(\alpha, \gamma)$-chain from Definition~\ref{def:alphagamma_chain}.
Define the following natural Markov kernel from the space $\T_{[k]}^{\bullet n}$ of decorated $[k]$-trees of mass $n$ to $\T_{[n]}$:
\begin{align}
  \label{eq:markovkernel_decorated_nonplanar}
  \Pi_k^{\bullet n}(\tree^\bullet, \cdot) = \P \left( T_n \in \cdot \ \middle \vert \ \pi_{[k]}^{\bullet n} \left( T_n \right) = \tree^\bullet \right),
\end{align}
where $T_n$ is the $n$th step of the $(\alpha, \gamma)$-growth process.

In order to introduce a down-up chain on $\T_{[k]}^{\bullet n}$, let us abuse notation so that $K_n$ and $\Pi_k^{\bullet n}$ denote both the kernel and the corresponding Markovian matrix, whilst $\pi_{[k]}^{\bullet n}$ is both the projection map and the matrix $\pi_{[k]}^{\bullet n} = {\left( 1_{\pi_{[k]}^{\bullet n} ( \tree ) = \tree^\bullet} \right)}_{\tree \in \T_{[n]}, \tree^\bullet \in \T_{[k]}^{\bullet n}}$.
\begin{defi}[Decorated $(\alpha, \gamma)$-chain]%
  \label{def:decorated_alphagamma_chain_intro}
  Fix $n > k \in \N$.
  The \textit{decorated $(\alpha, \gamma)$-chain on} $\T_{[k]}^{\bullet n}$, is a Markov chain on $\T_{[k]}^{\bullet n}$ with transition kernel
  \begin{align}
    K_k^{\bullet n} := \Pi_k^{\bullet n} K_n \pi_{[k]}^{\bullet n}.
    \label{eq:transitionkernel_decorated_intro}
  \end{align}
\end{defi}
The decorated $(\alpha, \gamma)$-chain on $\T_{[k]}^{\bullet n}$ can be described in an autonomous way without referencing the transition kernel of the $(\alpha, \gamma)$-chain.
This is done in Proposition~\ref{prop:decorated_transition_kernel}.
By combining the Kemeny-Snell and intertwining criteria mentioned earlier we obtain another main result of this paper:
\begin{thm}\label{thm:projection_decorated_chain}
  Fix $n \geq 2$.
  Let ${\left( T_n(m) \right)}_{m \in \N_0}$ denote an $(\alpha, \gamma)$-chain.
  For $1 \leq k < n$, let $T_{[k]}^{\bullet n}(m) := \pi_{[k]}^{\bullet n} \left( T_n(m) \right)$, $m \in \N_0$, denote the projection of the chain onto decorated $[k]$-trees of mass $n$.
  If $T_n(0) \sim \Pi_n (\tree^\bullet, \cdot)$ for some $\tree^\bullet \in \T_{[k]}^{\bullet n}$, then ${\left( T_{[k]}^{\bullet n}( m ) \right)}_{m \in \N_0}$ is a decorated $(\alpha, \gamma)$-chain on $\T_{[k]}^{\bullet n}$ started from $\tree^\bullet$.
\end{thm}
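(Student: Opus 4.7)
My plan is to invoke the Rogers--Pitman intertwining criterion~\cite{MR624684}. Since $\Pi_k^{\bullet n}(\tree^\bullet,\cdot)$ is supported on the fiber $(\pi_{[k]}^{\bullet n})^{-1}(\tree^\bullet)$ and $K_k^{\bullet n}$ is defined as $\Pi_k^{\bullet n}K_n\pi_{[k]}^{\bullet n}$, the whole theorem reduces to verifying the one-step identity
\begin{equation*}
\Pi_k^{\bullet n}\,K_n \;=\; K_k^{\bullet n}\,\Pi_k^{\bullet n}.
\end{equation*}
Its content is that if $T_n(0)\sim\Pi_k^{\bullet n}(\tree^\bullet,\cdot)$, then after one step of the $(\alpha,\gamma)$-chain the conditional distribution of $T_n(1)$ given its decorated projection is again $\Pi_k^{\bullet n}(\cdot,\cdot)$ evaluated at that projection; iterating yields the Markov property of $(T_{[k]}^{\bullet n}(m))_{m\in\N_0}$ with kernel $K_k^{\bullet n}$ and initial state $\tree^\bullet$.

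To verify this intertwining I would follow the planar-tree route foreshadowed in the abstract, paralleling the binary-case strategy of~\cite{forman2018projections}. Concretely, I would lift the $(\alpha,\gamma)$-chain to a planar analogue on planar $[n]$-trees; by the sibling-symmetry of the $(\alpha,\gamma)$-growth weights~\eqref{eq:alphagamma_weights} and of the down-step of Definition~\ref{def:alphagamma_chain}, the forgetful projection to $\T_{[n]}$ should satisfy Kemeny--Snell strong lumpability~\cite{MR0115196}, so the induced dynamics on $\T_{[n]}$ is precisely $K_n$. The same lift produces a planar decorated chain on planar decorated $[k]$-trees of mass $n$ that is strongly lumpable to $K_k^{\bullet n}$. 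In the planar world, the projection from planar $[n]$-trees to planar decorated $[k]$-trees is itself strongly lumpable: the planar structure is rigid enough that one-step transitions on the decorated side depend only on the decorated state. Chasing these two lumpability diagrams together with the definition of $\Pi_k^{\bullet n}$ via the $(\alpha,\gamma)$-growth process yields the desired non-planar intertwining.

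The main obstacle is the planar Kemeny--Snell step: one must check, by direct case analysis, that for any two planar $[n]$-trees with the same planar decorated $[k]$-projection the one-step transition distribution on the planar decorated space agrees exactly. This splits across several cases depending on whether the selected leaf $I$ lies in the part of the tree visible to the decoration, whether $\tilde I\in[k]$, whether removing $\tilde I$ contracts a branch point appearing in the tree shape, whether the swap in step~(iii) is trivial, and where the subsequent up-step deposits the new leaf. The swap probabilities in Definition~\ref{def:alphagamma_chain} are engineered precisely so that, after summing over the uniform choice of $I$ and the swap target $\tilde I$, the induced mass flow between decorated parts depends only on the decorated state and on the $(\alpha,\gamma)$-weights; bookkeeping this across all cases, and confirming that the resulting probabilities do not remember labels beyond what the decoration encodes, is where the real work lies.
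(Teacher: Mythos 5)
Your reduction of the theorem to the one-step intertwining identity $\Pi_k^{\bullet n}K_n = K_k^{\bullet n}\Pi_k^{\bullet n}$ is correct, and lifting to a planar world is indeed the route the paper takes. But both of the Kemeny--Snell lumpability claims on which your verification rests are false, and this is a genuine gap rather than a detail. First, the forgetful projection from (semi-)planar $[n]$-trees to $\T_{[n]}$ is \emph{not} strongly lumpable for the relevant planar chain: the down-step's local search is directional (it looks at the subtree immediately to the left of leaf $i$, or to the right if $i$ is leftmost), so two planar trees with the same non-planar shape but different orderings $\sigma_v$ produce different laws for $\tilde I$ and hence different transition distributions downstairs. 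The asymmetric swap probabilities $\tfrac{\alpha-\gamma}{(c_v-1)\alpha-\gamma}$, $\tfrac{\alpha}{(c_v-1)\alpha-\gamma}$, \ldots\ in Definition~\ref{def:alphagamma_chain} are obtained precisely by \emph{averaging} the directional search over the random planar order sampled from the $(\alpha,\alpha-\gamma)$-$\ocrp$ law of Lemma~\ref{lemma:internalstructure}; they are not reproduced state-by-state on each fiber, which is what Kemeny--Snell requires. This step needs the intertwining criterion with the specific lifting kernel $\hat{\Pi}_n$ of~\eqref{eq:MarkovKernel}, i.e.\ one must show the conditional law of the planar structure given the non-planar tree is preserved by one down-up step (this is the content of Lemma~\ref{lemma:condind} and its generalisation, Proposition~\ref{prop:generalresult_downstep_stationary_lemma_theorem}).

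Second, the projection from planar $[n]$-trees to planar decorated $[k]$-trees is likewise not strongly lumpable. When the selected leaf $i\in[k]$ has $y_i=1$ and the search exits into the mass $y_v$ of the adjacent branch point, the mass $Y_{\text{new}}$ transferred onto the leaf edge is the size of the neighbouring subtree inside that branch point; two planar trees with identical decorated projections but different internal arrangements of those $y_v$ leaves (one big subtree versus $y_v$ singletons, say) give different values of $Y_{\text{new}}$, so the one-step law on the decorated side is not a function of the decorated state. The distributions $q_{\alpha,\alpha}(n_g,\cdot)$, $q_{\alpha,\alpha-\gamma}(y_v,\cdot)$, $q_{\gamma,\gamma}(y_{e_{v\downarrow}},\cdot)$ in Proposition~\ref{prop:decorated_transition_kernel} arise only after averaging over the conditional law of the internal structures given the decoration (the spatial Markov property, Lemma~\ref{lemma:spatialMarkovProperty}), which is again an intertwining statement, not a lumping. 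In the paper's argument Kemeny--Snell is used only for the genuinely lumpable step from \emph{collapsed} trees (which retain the label sets $B_x$) to decorated trees (which retain only their cardinalities), where it holds by a relabelling bijection (Lemma~\ref{lemma:kemeny-snell}); the two projections you propose to treat by lumpability each require the intertwining criterion, and establishing the required invariance of the conditional laws of planar orders and internal structures under the down-step is where the substance of the proof lies.
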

Theorem~\ref{thm:projection_decorated_chain} makes it possible to vary $k$ and obtain a consistent system of projected Markov chains all running in stationarity, provided that the $(\alpha, \gamma)$-chain itself is running in stationarity.
\begin{cor}[Consistency in stationarity]%
  \label{cor:consistency_stationarity}
  Fix $n \geq 2$.
  Let ${\left( T_n(m) \right)}_{m \in \N_0}$ denote an $(\alpha, \gamma)$-chain running in stationarity.
  For $1 \leq k < n$, let $T_{[k]}^{\bullet n}(m) := \pi_{[k]}^{\bullet n} \left( T_n(m) \right)$, $m \in \N_0$, denote the projection of the chain onto decorated $[k]$-trees of mass $n$.
  Then each of the ${\left( T_{[k]}^{\bullet n}(m) \right)}_{m \in \N_0}$ is a decorated $(\alpha ,\gamma)$-chain on $\T_{[k]}^{\bullet n}$ running in stationarity.
\end{cor}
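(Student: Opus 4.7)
The plan is to combine Theorem~\ref{thm:projection_decorated_chain} with Theorem~\ref{thm:nonplanarstationarity} via a conditioning argument. Running in stationarity means that $T_n(0)$ has the distribution $\mu$ of the $n$th step of the $(\alpha,\gamma)$-growth process, which Theorem~\ref{thm:nonplanarstationarity} identifies as the unique invariant distribution of $K_n$. By the definition~\eqref{eq:markovkernel_decorated_nonplanar} of $\Pi_k^{\bullet n}$, conditioning $T_n(0)$ on $\pi_{[k]}^{\bullet n}(T_n(0)) = \tree^\bullet$ produces a sample from $\Pi_k^{\bullet n}(\tree^\bullet,\cdot)$. Hence, conditionally on $T_{[k]}^{\bullet n}(0) = \tree^\bullet$, Theorem~\ref{thm:projection_decorated_chain} applies and yields that ${(T_{[k]}^{\bullet n}(m))}_{m \in \N_0}$ is a decorated $(\alpha,\gamma)$-chain started from $\tree^\bullet$.

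Integrating over $\tree^\bullet$ with respect to the marginal distribution of $T_{[k]}^{\bullet n}(0)$, namely the pushforward $\nu := \mu \pi_{[k]}^{\bullet n}$, then shows that ${(T_{[k]}^{\bullet n}(m))}_{m \in \N_0}$ is a decorated $(\alpha,\gamma)$-chain started from $\nu$. It only remains to verify that $\nu$ is invariant for $K_k^{\bullet n} = \Pi_k^{\bullet n} K_n \pi_{[k]}^{\bullet n}$. For this I would observe that $\Pi_k^{\bullet n}(\tree^\bullet,\cdot)$ is supported on the fibre ${(\pi_{[k]}^{\bullet n})}^{-1}(\tree^\bullet)$, so that $\pi_{[k]}^{\bullet n} \Pi_k^{\bullet n} = \mathrm{Id}$ as Markovian matrices on $\T_{[k]}^{\bullet n}$. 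Combining this with the stationarity $\mu K_n = \mu$ gives $\nu K_k^{\bullet n} = \mu \pi_{[k]}^{\bullet n} \Pi_k^{\bullet n} K_n \pi_{[k]}^{\bullet n} = \mu K_n \pi_{[k]}^{\bullet n} = \mu \pi_{[k]}^{\bullet n} = \nu$, as required.

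There is no real obstacle here: the corollary is essentially bookkeeping once Theorem~\ref{thm:projection_decorated_chain} and Theorem~\ref{thm:nonplanarstationarity} are in hand. The only step deserving a careful word is the elementary identity $\pi_{[k]}^{\bullet n} \Pi_k^{\bullet n} = \mathrm{Id}$, which is the mechanism by which the pushforward of the invariant distribution on $\T_{[n]}$ becomes the invariant distribution of the decorated chain on $\T_{[k]}^{\bullet n}$.
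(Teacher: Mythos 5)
Your proposal is correct and is essentially the argument the paper intends: the paper offers no separate proof of Corollary~\ref{cor:consistency_stationarity}, treating it as an immediate consequence of Theorem~\ref{thm:projection_decorated_chain} together with the stationarity of the full chain, which is exactly your conditioning-then-mixing argument. One small imprecision is worth fixing in the invariance computation. In the paper's matrix convention (distributions as row vectors, kernels acting on the right, so that $K_k^{\bullet n} = \Pi_k^{\bullet n} K_n \pi_{[k]}^{\bullet n}$ reads ``lift, step, project''), the identity that follows from $\Pi_k^{\bullet n}(\tree^\bullet,\cdot)$ being supported on the fibre over $\tree^\bullet$ is $\Pi_k^{\bullet n} \pi_{[k]}^{\bullet n} = \mathrm{Id}$ on $\T_{[k]}^{\bullet n}$ (lift then project), whereas the cancellation you perform in the display is $\mu\, \pi_{[k]}^{\bullet n} \Pi_k^{\bullet n} = \mu$ (project then lift, applied to $\mu$). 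The matrix $\pi_{[k]}^{\bullet n} \Pi_k^{\bullet n}$ is a $\T_{[n]} \times \T_{[n]}$ kernel that is \emph{not} the identity; the correct justification for the step you need is that $\Pi_k^{\bullet n}$ is by~\eqref{eq:markovkernel_decorated_nonplanar} the disintegration of $\mu$ along $\pi_{[k]}^{\bullet n}$, so the tower property gives $\mu\, \pi_{[k]}^{\bullet n} \Pi_k^{\bullet n} = \mu$. With that substitution the chain of equalities $\nu K_k^{\bullet n} = \mu K_n \pi_{[k]}^{\bullet n} = \mu \pi_{[k]}^{\bullet n} = \nu$ goes through and the corollary follows.
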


In the context of the $(\alpha, \gamma)$-chain projected to the decorated $[k]$-tree spanned by the leaves labelled by $[k]$, let us for each $n \geq k$ consider a decorated $(\alpha, \gamma)$-chain started from $\tree_n^\bullet \in \T_{[k]}^{\bullet n}$ all with tree shape $\tree[s] \in \T_{[k]}$.
Let $Y_n^x(m)$ denote the proportion of mass associated with the insertable part $x \in \insertablef[s]$ of the decorated tree at time $m$, stopped the first time $Y_n^e (\cdot)$ hits $0$ for any leaf edge $e$, and assume that ${\left( Y_n^x(0) \right)}_{x \in \insertablef[s]}$ converges in distribution as $n \to \infty$.
Then
\begin{align}
  {\left( {\left( Y_n^x ( \lfloor n^2 t \rfloor ) \right)}_{x \in \insertablef[s]} \right)}_{t \geq 0}
  \dcon {\left( {\left( Y^x(t) \right)}_{x \in \insertablef[s]} \right)}_{t \geq 0}
    \quad \text{as} \quad n \to \infty,
  \label{eq:WF_general}
\end{align}
where ${\left( Y^x \right)}_{x \in \insertablef[s]}$ is a Wright-Fisher diffusion with weights ${\left( \tilde{w}_x \right)}_{x \in \insertablef[s]}$, where $\tilde{w}_x = w_x - 1$ whenever $x$ is an external edge, $\tilde{w}_x = w_x$ otherwise, and $w_x$ is defined by~\eqref{eq:alphagamma_weights}.
This follows exactly as in~\cite{RefWorks:doc:5b4cbb92e4b0bc982fe42f3a}.

Let $0 < \gamma \leq \alpha < 1$.
Let $\T_n^\circ$ denote the set of rooted multifurcating trees with $n$ unlabelled leaves and consider the natural projection from $\T_{[n]}$ to $\T_n^\circ$.
Then the $(\alpha, \gamma)$-chain satisfies the Kemeny-Snell criterion, and the projected $\T_n^\circ$-valued Markov chain evolves by simply (i) selecting a uniform leaf, (ii) deleting it, and (iii) using the growth rule of the unlabelled $(\alpha, \gamma)$-Markov branching model~\cite{RefWorks:doc:5b4cbb5fe4b02dc0c79270af} to insert a new leaf.
\fxnote{Check enumerations.}
The stationarity of this can also be seen directly from the sampling consistency of this Markov branching model.
In Corollary 3 of~\cite{RefWorks:doc:5b4cbb5fe4b02dc0c79270af} it was shown that, when suitably represented as discrete $\R$-trees with edge lengths $n^{-\gamma}$, these stationary distributions converges weakly, in the Gromov-Hausdorff topology to the distribution of a continuum random tree $\mathcal{T}^{\alpha, \gamma}$.
By Theorem 11 of~\cite{haas2012} this can be strengthened to Gromov-Hausdorff-Prokhorov convergence of weighted $\R$-trees.
In the light of~\eqref{eq:WF_general} we make the following conjecture:
\begin{conjecture*}
  The unlabelled $(\alpha, \gamma)$-chain with $n^2$ steps per time unit and edges scaled by $n^\gamma$ has a scaling limit that is a path-continuous Markov process in the Gromov-Hausdorff-Prokhorov space of weighted $\R$-trees, whose stationary distribution is that of $\mathcal{T}^{\alpha, \gamma}$.
\end{conjecture*}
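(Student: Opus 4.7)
The plan is to adapt the strategy used by Forman et al.\ to construct the Aldous diffusion: first prove convergence of the decorated projections of Definition~\ref{def:decorated_alphagamma_chain_intro} to well-defined limit processes, then assemble these by a projective-limit argument into a single process in the Gromov--Hausdorff--Prokhorov space of weighted $\R$-trees. Throughout, I would run the unlabelled $(\alpha,\gamma)$-chain in stationarity, so that for each $k$ Corollary~\ref{cor:consistency_stationarity} applies and the projection onto $\T_{[k]}^{\bullet n}$ is itself Markovian with mass vector evolving according to \eqref{eq:WF_general}.

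The first step is to strengthen \eqref{eq:WF_general} from a statement valid only up to the first time a leaf-edge mass hits zero into one valid for all $t \geq 0$. When a leaf-edge mass vanishes in the discrete chain, the spanning tree on the first $k$ labels in the ambient $[n]$-tree changes shape, and the chain continues on a new shape in $\T_{[k]}$. In the scaling limit the candidate dynamics are a family of Wright--Fisher diffusions with weights derived from \eqref{eq:alphagamma_weights}, one per tree shape, glued together at the boundary by an entrance law matching the instantaneous projection of the ambient continuum tree onto the newly emerged shape. Identifying this entrance law and proving the strong Markov property across boundary hits is the main analytic obstacle, since one must show that the order in which branch points in $\mathcal{T}^{\alpha,\gamma}$ are \emph{discovered} by the first $k$ labels is consistent with the $(\alpha,\gamma)$-weights; this is the multifurcating analogue of the excursion-theoretic step carried out for the binary Aldous diffusion.

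Given this, the second step is to promote these projection-level limits to convergence of the unlabelled tree itself. The $(\alpha,\gamma)$-continuum random tree $\mathcal{T}^{\alpha,\gamma}$ is determined, up to a GHP-null set, by the joint law of its decorated $[k]$-projections as $k \to \infty$, by the sampling consistency of the $(\alpha,\gamma)$-Markov branching model. The same reconstruction at the process level pins down a single candidate limit whose $k$-projections agree with those from step one, provided tightness holds. Tightness in GHP can be reduced to control of total height and of $\varepsilon$-entropy of the tree, both of which are uniform in $n$ in stationarity by Corollary~3 of Chen et al., upgraded to GHP-convergence of weighted $\R$-trees via Theorem~11 of Haas et al.\ (2012).

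The final obstacle is path-continuity. A single step of the chain alters the tree by removing and reinserting a single leaf, producing, after scaling edges by $n^{-\gamma}$, a GHP-displacement of order $n^{-\gamma}$; the question is whether $\lfloor n^2 h \rfloor$ such steps can accumulate into a macroscopic jump as $h \downarrow 0$. The label-swap in step~(iii) of Definition~\ref{def:alphagamma_chain} is delicate here, as it can move the reference leaf $I$ deep into the tree, and one must show that its cumulative effect does not create persistent discontinuities in the underlying unlabelled shape. I would aim for an $L^2$-modulus estimate on the GHP-distance between the chain at times $m$ and $m+\lfloor n^2 h \rfloor$, mirroring the binary-case estimates of Forman et al.\ but handling each branch-point multiplicity separately to accommodate the multifurcating structure; stationarity then propagates the modulus from a single time to all time by the Markov property.
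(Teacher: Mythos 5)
The statement you are trying to prove is not proved in the paper: it is stated explicitly as a conjecture, and the body of the paper only supplies ingredients (the projected Markov chains of Theorem~\ref{thm:projection_decorated_chain}, the Wright--Fisher limit~\eqref{eq:WF_general}, and the Gromov--Hausdorff--Prokhorov convergence of the stationary laws) that are intended to make the conjecture plausible. Your proposal is a reasonable sketch of the programme one would have to carry out --- it is essentially the multifurcating analogue of the Forman--Pal--Rizzolo--Winkel construction of the Aldous diffusion --- but it is not a proof, and each of its three steps conceals a substantial open problem that you acknowledge only as an ``obstacle'' without resolving.

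Concretely: (1) Extending~\eqref{eq:WF_general} past the first time a leaf-edge mass hits zero requires identifying the entrance law for the Wright--Fisher diffusion on the new tree shape and proving the Markov property across the degeneration time; in the binary case this occupied several papers and relied on an explicit excursion-theoretic description via interval partitions, and no multifurcating analogue is constructed here or in your sketch. The assertion that the ``order in which branch points are discovered'' is consistent with the $(\alpha,\gamma)$-weights is precisely the content one would need to prove, not a known fact one can cite. (2) The projective-limit step requires a consistency statement for the \emph{limiting} processes as $k$ varies (not just for the prelimiting chains, which is Corollary~\ref{cor:consistency_stationarity}), together with a measurable reconstruction of a GHP-valued process from its decorated marginals; neither is formulated, and the reconstruction of a c\`adl\`ag or continuous GHP path from countably many finite-dimensional projections is not automatic from sampling consistency of the static laws. (3) The tightness and $L^2$-modulus estimates for path-continuity are asserted by analogy but not derived; in particular the claim that Corollary~3 of~\cite{RefWorks:doc:5b4cbb5fe4b02dc0c79270af} gives uniform-in-$n$ control of height and $\varepsilon$-entropy is a statement about the stationary law at a fixed time, and says nothing about the modulus of continuity of the process. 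Since the paper leaves the statement open and your argument does not close any of these gaps, the proposal should be presented as a strategy for future work rather than as a proof.
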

In the binary case where $\gamma = \alpha$, this conjecture strengthens results by L\"{o}hr et al.~\cite{RefWorks:doc:5b7200a0e4b06af4d4ef714a}, in the case $\alpha = \frac{1}{2}$, and by Nussbaumer and Winter~\cite{nussbaumer2020algebraic}, who use a state space of algebraic trees with a weaker topology that, in some sense, disregards the edge lengths. 

The main new technique of this paper is the \textit{semi-planar $(\alpha, \gamma)$-growth process} and a down-up chain on what we will call \textit{semi-planar trees}, which will be two auxiliary processes that we use to derive the results stated above.
Crucially, both the $(\alpha, \gamma)$-growth process and the $(\alpha, \gamma)$-chain arise as projections of their semi-planar counterpart. 
We define the space of semi-planar trees to be trees where branch points are equipped with a left-to-right ordering of its subtrees (as a planar tree), such that we cannot distinguish the order of the two leftmost subtrees, which we define to be the two subtrees of the branch point with the smallest leaf labels.
The processes we define on the space of semi-planar trees are furthermore of mathematical interest in their own right.
The semi-planar $(\alpha, \gamma)$-growth process is defined similarly to the $(\alpha, \gamma)$-growth process, but the weight $(c-1)\alpha - \gamma$ in a branch point with $c$ children is split up so that there is weight $\alpha$ to the left of any subtree not leftmost, and weight $\alpha - \gamma$ to the right of the rightmost subtree.
$(\alpha, \gamma)$-growth processes are known to have intimate links to random partitions induced by Chinese restaurant processes~\cite{MR883646, RefWorks:doc:5b644f76e4b0a3434e490fc8} and ordered Chinese restaurant processes~\cite{RefWorks:doc:5b6c5580e4b0a3935d3436d8}, see e.g.~\cite{MR2606082, RefWorks:doc:5b4cbb5fe4b02dc0c79270af, RefWorks:doc:5b6c5580e4b0a3935d3436d8}, and the semi-planar trees provide exactly the right structure to further hook onto this link efficiently.

Compared to a non-planar tree, the additional structure will allow us to define a local search (as in step (ii) of the $\alpha$-chain) that coincides with the local search from the $\alpha$-chain in the cases where the leaf $i$ selected for deletion in step (i) is located in a binary branch point.
The key aspect of the down-step is that projecting the $(\alpha, \gamma)$-chain onto the space of (non-planar) $[n]$-trees using the natural surjective projection yields exactly the $(\alpha, \gamma)$-chain of Definition~\ref{def:alphagamma_chain}, if the initial distribution is picked according to an appropriate Markov kernel similar to~\eqref{eq:markovkernel_decorated_nonplanar}.

The paper is structured as follows:

Section 2 contains a brief overview of notation and structures relied on throughout the exposition.
Most notable are urn schemes, (un)ordered Chinese Restaurant Processes, basic tree growth processes and a general setup for tree-valued down-up chains.
We will also cover notation for operations on trees such as deletion, insertion and swapping of leaves.
These fundamental notions will be updated to various tree spaces in later sections.

In Section 3 we develop the notion of semi-planar trees, construct and investigate properties of a semi-planar $(\alpha, \gamma)$-growth process, and in Section 4 we go on to defining the semi-planar down-up chain and prove that it has a suitable stationary distribution.

The projection of the semi-planar down-up chain onto non-planar trees is carried out in Section 5, and Theorem~\ref{thm:nonplanar_alphagamma_chain} yields that this is the $(\alpha, \gamma)$-chain of Definition~\ref{def:alphagamma_chain}.
This section outlines the methodology that will be used in later sections.

In Section 6, an in-depth description of the behaviour of the semi-planar down-up chain projected to the space of decorated trees is provided.

Section 7 provides a selection of key results characterizing the link between projections of the semi-planar down-up chain and the semi-planar down-up chain, and Section 8 pulls together the threads from Section 5, 6, and 7, and proves Theorem~\ref{thm:projection_decorated_chain}. \\

\textbf{Acknowledgements.}
This research is funded by the Lundbeck Foundation (R263-2017-3677) and the Engineering and Physical Sciences Research Council (1930434).
%

\section{Preliminaries}\label{sec:prelims}
\subsection{Urn schemes}
Urn schemes are intimately connected with Chinese Restaurant Processes and other exchangeable processes, and this connection will play a crucial role throughout this exposition.
We will here give a brief account of properties of the generalized \textit{P\'{o}lya urn}.
\begin{defi}[Generalized P\'{o}lya urn]
  Fix $k \in \N$ and $\mathbf{w} = {\left(w_j\right)}_{j \in [k]}$ where $w_j \geq 0$ for each $j \in [k]$ and $\sum_{j=1}^k w_j > 0$. 
  Let ${(X_n)}_\nin$ be a sequence of random variables taking values in $[k]$ such that
  \begin{align}
    \forall j \in [k] \colon \quad \P \left( X_{n+1} = j \ \middle \vert \ N_1^n = n_1, \ldots, N_k^n = n_k \right) = \frac{w_j + n_j}{\sum_{i \in [k]} w_i + n},
    \label{eq:polyaurn}
  \end{align}
  where $N_j^n = \sum_{j^\prime = 1}^n 1_{\{X_{j^\prime} = j\}}$ for each $j \in [k]$ and $n \in \N_0$.
  We call ${(X_n)}_\nin$ a \textit{generalized P\'{o}lya urn with initial weights $\mathbf{w}$}.
  \label{def:polyaurn}
\end{defi}
More loosely, we will use the term \textit{urn scheme} for any countable sequence of random variables governed by~\eqref{eq:polyaurn}.
Before summarizing some of the properties of the above urn scheme, let us briefly recall a few distributions related to urn schemes:
\begin{defi}[Dirichlet-multinomial distribution]
  Fix $k \in \N$, $\nin$, and $\mathbf{w} = {(w_j)}_{j \in [k]}$ where $w_j > 0$. 
  Let $\mathbf{N} = (N_1, \ldots, N_k)$ be a random variable taking values on $\{(n_1, \ldots, n_k) \in \N_0^k \mid \sum_{j=1}^k n_j = n\}$.
  If
  \begin{align}
    \P \left( \mathbf{N} = (n_1, \ldots, n_k) \right)
    = \frac{n!\ \Gamma \left( \sum_{j=1}^k w_j \right)}{\Gamma \left( n + \sum_{j=1}^k w_j \right)} \prod_{j = 1}^k \frac{\Gamma (n_j + w_j)}{n_j!\ \Gamma ( w_j)}
    \label{eq:dirmult}
  \end{align}
  we say that $\mathbf{N}$ has a \textit{Dirichlet-multinomial distribution with $n$ trials and weights $\mathbf{w}$}, and denote this by $\mathbf{N} \sim \dirmult^n(\mathbf{w})$.
  For $k = 2$, we will write $N_1 \sim \dirmult^n(w_1,w_2)$, since $N_2 = n - N_1$.
  This distribution is also known as the \textit{Beta-binomial distribution}, denoted by $\betabin^n(w_1, w_2)$.
\end{defi}
\noindent
We now summarize a couple of properties which will be of value later:
\begin{prop}
  Fix $k \in \N$ and ${\mathbf{w} = (w_1, \ldots, w_k)}$ where $w_j > 0$ for each $j \in [k]$, and let $X = {(X_n)}_\nin$ be a generalized P\'{o}lya urn with weights $\mathbf{w}$.
  Furthermore, for each $j \in [k]$ and $\nin$ let $N_j^n$ be as in Definition~\ref{def:polyaurn}.
  Then
  \begin{enumerate}
    \item $X$ is exchangeable,
    \item $(N_1^n, \ldots, N_k^n) \sim \dirmult^n(\mathbf{w})$,
    \item $N_j^n \sim \dirmult^n \left( w_j, \sum_{i \neq j} w_i \right) = \betabin^n \left( w_j, \sum_{i \neq j} w_i \right)$ for each $j \in [k]$,
    \item\label{prop:polyaurn_conditioning} $\P \left( X_1 = j \ \middle \vert \ N_1^n = n_1, \ldots, N_k^n = n_k \right) = \frac{n_j}{n}$ for any $j \in [k]$.
  \end{enumerate}
  \label{prop:polyaurn}
\end{prop}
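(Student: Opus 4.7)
The plan is to extract all four assertions from one explicit computation of the joint probability mass function of $(X_1, \ldots, X_n)$. Iterating~\eqref{eq:polyaurn} gives
\[
\P(X_1 = x_1, \ldots, X_n = x_n) = \prod_{m=1}^n \frac{w_{x_m} + N_{x_m}^{m-1}}{W + m - 1},
\]
where $W := \sum_{i=1}^k w_i$ and $(n_1, \ldots, n_k)$ are the counts induced by $(x_1, \ldots, x_n)$. For each category $j$, the numerator contributes exactly $n_j$ factors which, irrespective of the positions of the $j$'s in the sequence, appear in the order $w_j, w_j + 1, \ldots, w_j + n_j - 1$. Collecting numerator factors by category and recognising a rising factorial in the denominator yields
\[
\P(X_1 = x_1, \ldots, X_n = x_n) = \frac{\Gamma(W)}{\Gamma(W + n)} \prod_{j=1}^k \frac{\Gamma(w_j + n_j)}{\Gamma(w_j)},
\]
which depends on $(x_1, \ldots, x_n)$ only through $(n_1, \ldots, n_k)$. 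This symmetry gives (i) immediately, and summing over the $n!/\prod_j n_j!$ sequences with fixed counts recovers~\eqref{eq:dirmult}, which proves (ii).

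For (iii) I would use the standard merging argument. Fix $j$ and set $Y_m := 1_{\{X_m = j\}}$, so that $\sum_{m=1}^n Y_m = N_j^n$. Then~\eqref{eq:polyaurn} reads
\[
\P(Y_{n+1} = 1 \mid Y_1, \ldots, Y_n) = \frac{w_j + N_j^n}{W + n}, \qquad \P(Y_{n+1} = 0 \mid Y_1, \ldots, Y_n) = \frac{(W - w_j) + (n - N_j^n)}{W + n},
\]
so ${(Y_m)}_{m \in \N}$ is itself a generalised P\'olya urn on $\{0,1\}$ with weights $(w_j, W - w_j)$. Applying (ii) to this binary urn delivers the claimed Dirichlet-multinomial, equivalently Beta-binomial, distribution for $N_j^n$.

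Part (iv) is a short consequence of (i): by exchangeability, for each $m \in [n]$ the conditional law of $X_m$ given the count vector is the same, so
\[
\P(X_1 = j \mid N_1^n = n_1, \ldots, N_k^n = n_k) = \frac{1}{n} \sum_{m=1}^n \P(X_m = j \mid N_1^n = n_1, \ldots, N_k^n = n_k) = \frac{n_j}{n},
\]
where the last equality uses that $\sum_{m=1}^n 1_{\{X_m = j\}} = N_j^n$ is fixed by the conditioning. The only step that requires genuine care is the factor-collecting in the joint pmf computation; everything else is bookkeeping or a direct corollary of exchangeability, so I expect no serious obstacle.
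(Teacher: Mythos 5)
Your proof is correct. The paper itself offers no proof of this proposition --- it is stated as a summary of well-known properties of generalized P\'olya urns --- so there is nothing to compare against; your argument (explicit joint pmf showing dependence only on the counts, hence exchangeability and the Dirichlet-multinomial law; the two-colour merging for the marginal; and the averaging-over-positions identity $\sum_{m=1}^n \P(X_m = j \mid \mathbf{N}^n) = n_j$ for the conditional law of $X_1$) is the standard one and each step is sound.
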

\subsection{Chinese Restaurant Processes}\label{sec:chineserest}
We briefly recall the concept of Chinese Restaurant Processes.
\begin{defi}
  Fix $\alpha$ and $\theta$ such that either $0 \leq \alpha \leq 1$ and $\theta > -\alpha$, or $\alpha < 0$ and $\theta = l_\text{max}\alpha$ for some $l_\text{max} \in \N$.
  The seating plan for the customers in a \textit{Chinese Restaurant Process} with parameters $(\alpha, \theta)$, an $(\alpha,\theta)$-$\crp$, is as follows:
  \begin{itemize}
    \item The first customer sits at the first table.
    \item If the first $n$ customers are seated at $L$ tables (where $L \leq l_\text{max}$ in the case where $\alpha < 0$ and $\theta = l_\text{max} \alpha$) with $N_1, \ldots, N_L$ customers seated at each table, customer $n+1$ will sit at
      \begin{itemize}
        \item table $l$ with probability $\frac{N_l - \alpha}{n + \theta}$, for each $l \in [L]$, and
        \item at a new table, $L+1$, with probability $\frac{L\alpha + \theta}{n + \theta}$.
      \end{itemize}
  \end{itemize}
  \label{def:unordered_chinese_restaurants}
\end{defi}
The table-seating of the first $n$ customers naturally induces a partition of $[n]$ by saying that $j$ and $j^\prime$ belong to the same block of the partition if and only if customers $j$ and $j^\prime$ are seated at the same table in the Chinese Restaurant.
It is well-known that in an $(\alpha, \theta)$-$\crp$ the number of customers seated at the first table, $N_1$, after having seated $n$ customers in total, satisfies that $N_1 \sim \betabin^n(1-\alpha, \alpha + \theta)$.
For a more extensive account of this process, see~\cite{RefWorks:doc:5b644f76e4b0a3434e490fc8}. \\

\noindent
We can define an \textit{ordered Chinese Restaurant Process} with parameters $(\alpha, \theta)$ for $0 \leq \alpha \leq 1$ and $\theta \geq 0$, denoted by $\ocrp(\alpha, \theta)$, in a similar fashion.
The seating dynamic is the same but we now associate a left-right ordering to the tables.
For any number of tables $L \in \N$, conditional on customer $n+1$ sitting at a new table, $L+1$, place this table
\begin{itemize}
  \item to the left of the leftmost table with probability $\frac{\alpha}{L\alpha + \theta}$,
  \item between any pair of neighbouring tables with probability $\frac{\alpha}{L\alpha + \theta}$, and
  \item to the right of the rightmost table with probability $\frac{\theta}{L\alpha + \theta}$.
\end{itemize}
This construction not only gives rise to a random partition of $[n]$ but also a random permutation of $[L]$, $\Sigma$, corresponding to the ordering of the $L$ tables seating $n$ customers.
It holds that
\begin{align}
  \P \left( \Sigma = \sigma \right)
  = {\left( \frac{\theta}{\alpha} \right)}^{R(\sigma)} \frac{\Gamma\left( \frac{\theta}{\alpha} \right)}{\Gamma\left( \frac{\theta}{\alpha} + L \right)}
  \label{eq:ocrp_tableorder}
\end{align}
for every permutation $\sigma$ of $[L]$, where
\begin{align*}
  R(\sigma) = \# \left\{ j \in [L] \ \middle \vert \ \sigma(j) > \sigma(j^\prime)\ \text{for all}\ 1 \leq j^\prime < j \right\}
\end{align*}
is the number of record values in $\sigma$.
Whenever we wish to refer to the distribution of a random partition induced by an $(\alpha, \theta)$-$\crp$ or $(\alpha, \theta)$-$\ocrp$ with $n$ customers, we will denote these distributions by $\crp^n(\alpha, \theta)$ and $\ocrp^n(\alpha, \theta)$, respectively. \\

We have the following result:
\begin{prop}[Proposition 6 of~\cite{RefWorks:doc:5b6c5580e4b0a3935d3436d8}]\label{prop:decrementmatrix}
  For each $\nin$, let ${\mathbf{N}}_n = (N_1^n, \ldots, N_k^n)$ denote the vector of the number of customers seated in an $(\alpha, \theta)$-$\ocrp$ with $n$ customers, enumerated in the table order of appearance.
  It holds that the corresponding sequence ${({\mathbf{N}}_n)}_\nin$ is regenerative, i.e.\ that given the first part of $\mathbf{N}_n$ is $n_1$, then the remaining parts of $\mathbf{N}_n$ sum to $n - n_1$ and have the same distribution as $\mathbf{N}_{n - n_1}$.
  Furthermore, 
  \begin{align}
    \P \left( \mathbf{N}_n = (n_1, \ldots, n_k) \right)
    = \prod_{i=0}^{k-1} q_{\alpha, \theta} \left(  n - \sum_{i' = 1}^{i} n_{i'},  n_{i + 1} \right) 
    \label{eq:decrementmatrixdist}
  \end{align}
  where the \textit{decrement matrix}, $q_{\alpha, \theta}$, is given by
  \begin{align}
    q_{\alpha, \theta}(n,m) =
    \begin{pmatrix}
      n \\ m
    \end{pmatrix}
    \frac{(n - m)\alpha + m\theta}{n} \frac{\Gamma(m-\alpha)}{\Gamma(1-\alpha)} \frac{\Gamma(n-m+\theta)}{\Gamma(n+\theta)}.
    \label{eq:decrementmatrix}
  \end{align}
\end{prop}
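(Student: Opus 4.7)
The plan is to prove the two assertions in sequence: first the regenerative property, then the explicit distribution $\P(N_1^n = m) = q_{\alpha, \theta}(n, m)$, where $N_1^n$ is the size of the first part in the left-to-right order induced by the $\ocrp$. The product formula~\eqref{eq:decrementmatrixdist} then follows from iterated conditioning on the first part.

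For the regeneration, the crucial observation is that at step $n+1$ of the $(\alpha, \theta)$-$\ocrp$, a single action---the creation of a new table immediately to the left of the current leftmost---occurs with probability exactly $\alpha/(n+\theta)$, independent of the current configuration. Every other action either grows the current leftmost (with state-dependent probability $(N_1 - \alpha)/(n+\theta)$) or affects only the non-leftmost tables (with total probability $(n - N_1 + \theta)/(n+\theta)$, again independent of the finer non-leftmost structure). I would then show that, conditional on the set of customers seated at the first table having size $n_1$, the remaining $n - n_1$ customers, in their original arrival order and with their non-leftmost table assignments, evolve precisely as an independent $(\alpha, \theta)$-$\ocrp$ with $n - n_1$ customers. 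The key point is that each non-leftmost transition probability, once normalised by the total probability of a non-leftmost action at step $n+1$, coincides with the standard $(\alpha, \theta)$-$\ocrp$ rule applied to the restricted non-leftmost configuration. Consequently, the sizes of the non-leftmost tables in left-to-right order are distributed as $\mathbf{N}_{n - n_1}$.

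For the distribution of $N_1^n$, I would work with the Markov chain satisfied by the leftmost size under $\ocrp$ dynamics. At step $n+1$, given current value $m$, the size transitions to $m+1$ with probability $(m - \alpha)/(n + \theta)$, to $1$ with probability $\alpha/(n + \theta)$, and remains at $m$ with the remaining probability $(n - m + \theta)/(n + \theta)$. Writing $f_n(m) := \P(N_1^n = m)$ and assembling the corresponding Kolmogorov forward equation, I would verify by induction on $n$, with base case $f_1(1) = 1 = q_{\alpha, \theta}(1, 1)$, that $q_{\alpha, \theta}(n, m)$ solves the recursion; the required ratio manipulations reduce to Gamma-function identities of the form $\Gamma(x+1) = x \Gamma(x)$. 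Combining this with the regenerative property and iterating on the number of parts $k$ produces~\eqref{eq:decrementmatrixdist}.

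The main obstacle is the regeneration argument itself: one must decompose the full joint distribution of the $\ocrp$ trajectory into a leftmost component and a non-leftmost component, and verify that the latter genuinely satisfies the $(\alpha, \theta)$-$\ocrp$ rules on the reduced arrival sequence. A cleaner alternative is to invoke the stick-breaking representation of the underlying Pitman-Yor process: the leftmost interval corresponds to the first stick in an appropriately ordered construction, and the regenerative structure then follows at once from the independence of the remaining sticks, with the distribution of $N_1^n$ recovered by integrating a Binomial against the first Beta-distributed stick.
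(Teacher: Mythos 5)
First, a point of comparison: the paper does not prove this proposition at all --- it is quoted, with attribution, as Proposition 6 of the cited work of Pitman and Winkel --- so you are supplying an argument where the paper supplies only a citation. Your treatment of the marginal law of the first part is correct. You rightly read ``first part'' as the \emph{leftmost} table: with the paper's convention that the left end and each gap carry weight $\alpha$ while the right end carries weight $\theta$, the rightmost part does not have law $q_{\alpha,\theta}(n,\cdot)$ in general (only when $\theta=\alpha$), so the orientation matters. The leftmost size is indeed a Markov chain with the three transition probabilities you list, and $q_{\alpha,\theta}$ does solve the resulting forward equation: after cancelling the common Gamma factors the induction step reduces to the elementary identity $\binom{n}{m-1}\bigl((n-m+1)\alpha+(m-1)\theta\bigr)+\binom{n}{m}\bigl((n-m)\alpha+m\theta\bigr)=\tfrac{n}{n+1}\binom{n+1}{m}\bigl((n+1-m)\alpha+m\theta\bigr)$, so that part of your plan goes through.

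The genuine gap is in the regeneration step, exactly where you flagged the main obstacle. You claim that, conditional on the size of the final leftmost table, the remaining customers with their non-leftmost table assignments evolve as an autonomous $(\alpha,\theta)$-$\ocrp$ on $n-n_1$ customers, because each non-leftmost transition, normalised by the total probability $\tfrac{n-N_1+\theta}{n+\theta}$ of a non-leftmost action, obeys the $\ocrp$ rule on the restricted configuration. That normalisation computation is correct, but it does not give autonomy: whenever a new table is created at the left end (probability $\alpha/(n+\theta)$), the \emph{entire former leftmost table} is demoted into the non-leftmost configuration as a block of size $N_1^{\mathrm{old}}$. Those customers were seated by leftmost-specific probabilities and do not enter the restricted configuration one at a time under the $\ocrp$ rule, so the restricted configuration is not an $\ocrp$ run on the reduced arrival sequence, and the proposed decomposition of the trajectory into a leftmost and an independent non-leftmost component fails. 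The stick-breaking alternative does not repair this ``at once'' either: what you need there is the multiplicative regeneration of the \emph{left-to-right ordered} limit frequencies, which is essentially the content of the proposition being proved (and the leftmost frequency is not a single Beta stick when $\alpha>0$). A route that does close: having established $\P(N_1^n=m)=q_{\alpha,\theta}(n,m)$ by your recursion, prove the product formula~\eqref{eq:decrementmatrixdist} directly by induction on $n$, summing over the predecessors of a given composition under the seating rule (a part decremented by one, or a singleton part removed, with weights $n_j-1-\alpha$, $\alpha$, or $\theta$ accordingly); the regenerative property is then an immediate corollary of~\eqref{eq:decrementmatrixdist} rather than an input to it.
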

\noindent
For more information regarding $(\alpha, \theta)$-$\ocrp$ and their role in regenerative tree growth processes, see~\cite{RefWorks:doc:5b6c5580e4b0a3935d3436d8}.
\subsection{Trees}
Recall that $\T_A$ denotes the space of (non-planar) rooted, multifurcating trees with leaves labelled by the elements of $A$, and call an element of $\T_A$ an $A$-tree.
For each $j \in A$ we will refer to the leaf labelled by $j$ as \textit{leaf $j$}, whilst denoting the root vertex by $\emptyset$.
Let $\edge(\tree)$ and $\vertices(\tree)$ denote the set of edges and vertices of $\tree \in \T_A$, respectively.
\begin{nota} Fix a finite set $A$ and $\tree \in \T_A$.
  \begin{enumerate}
    \item Let $e \in \edge(\tree)$ be an edge between $u, v \in \vertices(\tree)$.
      If $u$ is closer to the root, $\emptyset$, than $v$, in the graph distance, we say that $u$ is the \textit{parent} of $v$, and that $v$ is the \textit{child} of $u$.
      For brevity, we will sometimes write $u \prec_{\tree} v$ if $u$ is the parent of $v$ in $\tree$.
      We will write $e_{uv}$ if there is any ambiguity about which edge we are discussing, and in cases where we do not wish to assign a name to the parent of $v$, we will simply denote the edge between $v$ and its parent by $e_{v \downarrow}$.
      For the unique edge connected to root we will write $e_{\emptyset \uparrow}$, and, to ease notation, we shall simply refer to a leaf edge by the label of the associated leaf.
    \item If $v_1, v_2 \in \vertices(\tree)$ have the same parent, we say that they are \textit{siblings}.
    \item We say that a vertex that does not have any children is a \textit{leaf}, and will refer to an edge connected to a leaf as a \textit{leaf edge} or an \textit{external edge}.
      It is worth noting that the root is not a leaf, and importantly, we do not consider the edge $e_{\emptyset \uparrow}$ external.
      We will refer to any vertex apart from the root that is not a leaf as a \textit{branch point}, and refer to the collection of these by $\branchpoints (\tree)$.
      To ease notation later on, we will define \textit{insertable parts} of the tree as $\insertablef[t] := \branchpoints(\tree) \cup \edge(\tree)$.
    \item The \textit{ancestral line} from a vertex $v \in \vertices(\tree)$ is the collection of vertices $v_0, \ldots, v_l$, and edges, $e_1, \ldots, e_l$, such that $v_0 = v$, $v_l = \emptyset$, whilst for all $j \in [l]$ it holds that $v_j$ is the parent of $v_{j-1}$ with $e_{j} = e_{v_{j-1}v_j}$.
  \end{enumerate}
\end{nota}

Now fix $\tree \in \T_A$ and some $i \in A$.
If $v_0, \ldots, v_l$ and ${\left( e_{v_{j-1}v_j} \right)}_{j \in [l]}$ denotes the ancestral line from leaf $i$, this naturally splits $\tree$ into the ancestral line and a collection of \textit{spinal bushes} each consisting of a collection of \textit{subtrees}.
To make this precise, note how for each $j \in [l-1]$ there is a collection of subtrees, ${\left( \tree_{j,j^\prime} \right)}_{j^\prime \in [c_{v_j} - 1]}$, rooted at $v_j$, such that each of them only contains descendants of $v_j$ that are not also descendants of $v_{j-1}$.
Here $c_{v_j}$ denotes the number of children of $v_j$.
We call this collection of subtrees a \textit{spinal bush}, and will refer to the spinal bush located at $v_j$ as the \textit{$j$th spinal bush on the ancestral line from leaf $i$} for each $j \in [l-1]$.
Note how the ancestral line from $i$ and the collection of the spinal bushes are disjoint apart from the root-vertices of the subtrees in the spinal bushes, and exhaust $\tree$, and are thus referred to as the \textit{spinal decomposition of $\tree$ from $i$}.

\subsection{Tree growth processes}%
\label{sec:treegrowthprocesses}
In order to define tree growth processes we first need to introduce various operations on trees:
\begin{defi}[Insertion of a leaf]
  \label{def:insertion_nonplanar}
  Let $A \subset \N$ be a finite set, fix $\tree[s] \in \T_A$, $j \in \N \setminus A$, and $x \in \insertablef[s]$.
  The operation of inserting leaf $j$ to $x$ in $\tree[s]$ is the map $\left( \tree[s], x, j \right) \overset{\varphi}{\mapsto} \tree \in \T_{A \cup \{j\}}$ where $\tree$ is defined as follows.
  \begin{itemize}
    \item If $x = e_{vw} \in \edge(\tree[s])$, $\tree$ is obtained by adding new vertices $v'$ and $j$ to  $\vertices ( \tree[s] )$, and replacing the edge $e_{vw}$ with the three edges $e_{vv'}, e_{v'w},$ and $e_{jv'}$.
    \item If $x = v \in \branchpoints( \tree[s] )$, $\tree$ is obtained by adding a new vertex, $j$, to $\vertices ( \tree[s] )$, and an edge, $e_{jv}$, to $\edge ( \tree[s] )$.
  \end{itemize}
\end{defi}
\begin{defi}[Deletion of a leaf]
  \label{def:leaf_deletion}
  Let $A$ be a finite set and fix $\tree \in \T_A$.
  The operation of deleting leaf $j \in A$ from $\tree$ is the map
  \begin{align*}
    \left( \tree, j \right) \overset{\mathring{\rho}}{\mapsto} \tree[s] \in \T_{A \setminus \{j\}}
  \end{align*}
  where $\tree[s]$ is obtained in the following way.
  Let $v$ be the parent of $j$ and let $c_v$ be the number of children of $v$.
  If $c_v \geq 3$, $\tree[s]$ is obtained by removing $j$ from $\vertices ( \tree )$ and $e_{jv}$ from $\edge (\tree)$.
  If $c_v = 2$, let $w$ be the sibling of $j$, $v'$ the parent of $v$, and obtain $\tree[s]$ by removing $j$ and $v$ from $\vertices (\tree)$ and $e_{wv}, e_{vv'}$, and $e_{jv}$ from $\edge (\tree)$ and subsequently adding $e_{wv'}$ to the latter.
  We denote by $\rho$ the map that, after applying $\mathring{\rho}$, subsequently relabels $A \setminus \{j\}$ by $[\# A - 1]$ using the increasing bijection.
\end{defi}
\fxnote{Make figure showing deletion and insertion.}
If we wish to delete all leaves of $\tree \in \T_A$ labelled by the elements of $B \subsetneq A$, we will denote this by $\mathring{\rho} (\tree, B)$ and $\rho (\tree, B)$, respectively, obtaining the resulting tree by successively applying the above definition to the elements of $B$.

Throughout this exposition we will need the following notion from~\cite{RefWorks:doc:5b6c5580e4b0a3935d3436d8}:
\begin{defi}[Tree growth process]\label{def:growthprocess}
  A Markov chain, ${\left( T_n \right)}_\nin$, is a \textit{tree growth process} if $T_n$ takes values in $\T_{[n]}$ and $\rho \left( T_{n+1}, n+1 \right) = T_n$ for each $\nin$. 
\end{defi}
Equivalently, one could define a tree growth process in terms of a sequence of Markov kernels, $G = {\left( G_n \right)}_\nin$, where $G_n$ is a Markov kernel from $\T_{[n]}$ to $\T_{n+1}$.
We will refer to $G$ as a \textit{growth rule}.
The main tree growth process of interest to us is the following:
\begin{defi}[$(\alpha, \gamma)$-growth process]\label{def:growthprocess_alphagamma}
  Fix $0 \leq \gamma \leq \alpha \leq 1$.
  Let $T_1$ be the unique element of $\T_1$.
  For any $\nin$ construct $T_{n+1}$ conditional on $T_n = \tree[s] \in \T_{[n]}$, by, for each $x \in \insertablef[s]$, setting
  \begin{align*}
    w_x
    = 
    \begin{cases}
      1 - \alpha & \text{if $x \in \edge \left( \tree[s] \right)$ is external,} \\
      \gamma & \text{if $x \in \edge \left( \tree[s] \right)$ is internal,} \\
      \left( c_x - 1 \right) \alpha - \gamma & \text{if $x \in \branchpoints (\tree[s])$ with $c_x$ children,} \\
    \end{cases}
  \end{align*}
  and then defining the conditional distribution of $T_{n+1}$ by
  \begin{align*}
    \P \left( T_{n+1} = \varphi \left( \tree[s], x, n+1 \right) \ \middle \vert \ T_n = \tree[s] \right)
    = \frac{w_x}{n - \alpha}. 
  \end{align*}
  The sequence ${\left( T_n \right)}_\nin$ is referred to as the $(\alpha, \gamma)$-\textit{growth process}.
\end{defi}
The $(\alpha, \gamma)$-growth process was first introduced in~\cite{RefWorks:doc:5b4cbb5fe4b02dc0c79270af}, and encompasses several other well-known growth processes.
For $\gamma = 1-\alpha$ and $\frac{1}{2} \leq \alpha < 1$ the above growth process is Marchal's stable tree growth~\cite{RefWorks:doc:5b6c561fe4b06c0731a5c558}.
For $0 \leq \gamma = \alpha \leq 1$ it is identical to Ford's $\alpha$-model~\cite{RefWorks:doc:5b76ce32e4b0820c421f301d}.
And, as mentioned previously, a special case of Ford's $\alpha$-model is R\'{e}my's uniform binary tree growth process~\cite{RefWorks:doc:5b71b380e4b06c0731a629f4}, which arises from choosing $\gamma = \alpha = \frac{1}{2}$.
One property of the latter growth process is that $T_n$ will be uniformly distributed on $\T_{[n]}^{\bin}$, the space of binary trees with $n$ leaves~\cite{RefWorks:doc:5b4cbc14e4b04428cc72cf41}, for each $\nin$.
We will refer to the growth rule governing the $(\alpha, \gamma)$-growth process as the $(\alpha, \gamma)$-growth rule.

\subsection{Down-up chains}%
\label{sec:downupchains}
Let us now introduce a general notion of down-up chain for trees that contains not only the Aldous down-up chain~\cite{RefWorks:doc:5b4cbc14e4b04428cc72cf41} but also the down-up chain proposed in~\cite{RefWorks:doc:5b4cbc93e4b07f5746e47014}.
The definition of down-up chains given here is in accordance with down-up and up-down chains on more general state spaces studied in detail in~\cite{MR2480792,MR2095623,MR2480787,MR2596654}.
\begin{defi}[Down-up chains]\label{downupchain_general}
  Fix $n \in \N$, $\tree \in \T_{[n]}$, some growth rule, $G$, and a \textit{transformation function} $f \colon \T_{[n]} \times [n] \to \T_{[n]} \times [n]$.
  The $(f, G)$-\textit{down-up chain, ${\left( T_n(m) \right)}_{m \in \N_0}$,  started at $\tree$} is a Markov chain on $\T_{[n]}$ with $T_n(0) = \tree$ and with the following transition kernel.
  For each $m \in \N_0$ obtain $T_n(m+1)$ from $T_n(m)$ by
  \begin{enumerate}
    \item\label{transformation_down} applying the transformation function, $f$, to $T_n(m)$ and a random variable $I \sim \Unif\left( [n] \right)$, thereby defining
      \begin{align*}
        \left( T_n'(m), \tilde{I} \right)
        = f \left( T_n(m), I \right),
      \end{align*}
    \item\label{delete_relabel} deleting the leaf labelled $\tilde{I}$ from $T_n'(m)$, and relabel $\{\tilde{I} + 1, \ldots, n\}$ by $\{\tilde{I}, \ldots, n-1\}$ using the increasing bijection, i.e.\ defining
      \begin{align*}
        T_n^\downarrow(m)
        = \rho\left( T_n'(m), \tilde{I} \right) \in \T_{[n-1]},
      \end{align*}
    \item\label{insertion} and inserting leaf $n$ to $T_n^{\downarrow}(m)$ according to $G_{n-1}$ to obtain $T_n(m+1)$.
  \end{enumerate}
  %
\end{defi}
We will only work with the above type of down-up chains, but there are examples (see e.g.~\cite{RefWorks:doc:5b4cbc93e4b07f5746e47014}) of down-up chains where there is no relabelling.
In that case, $\rho$ is replaced by $\mathring{\rho}$ in~\ref{delete_relabel}, and~\ref{insertion} is replaced with a step where the leaf $\tilde{I}$ is inserted directly into $T_n^\downarrow(m)$.
This is particularly useful when the growth rule induces exchangeable leaf labels, which is the case for R\'{e}my's uniform tree growth and Marchal's stable tree growth, but not for other $(\alpha, \gamma)$-growth processes.

Let us focus on the role of the transformation map $f$.
In the above we require $I$ to be uniform on $[n]$.
As in step~\ref{transformation_down}, say that $\left( \tree[s]', \tilde{I} \right) := f \left( \tree[s], I \right)$.
Even though $I$ is uniformly distributed on the leaves of $\tree[s]$, it will be dependent on $f$ whether or not $\tilde{I}$ is uniformly distributed on $\tree[s]'$.
This will allow us to make non-uniform deletions, which will be of vital importance.

In the basic cases, $f$ will simply be the identity function, in which case we will call the chain an $(\id,G)$-down-up chain and then specify $G$.
For our purposes, it will be important to use more complicated function, in which case we will characterize the output of $f$ explicitly in terms of operations on $\tree[s]$ and the value of $I$.
Furthermore, if $G$ is the $(\alpha, \gamma)$-growth rule we will denote the associated chains by $\left(f, (\alpha, \gamma)\right)$-down-up chain, and then specify $f$.

We note that the $(\id,G)$-down-up chain without relabelling, where $G$ is the \textit{uniform growth rule}, corresponds to Aldous' down-up chain~\cite{RefWorks:doc:5b4cbc14e4b04428cc72cf41}.
To prepare a generalization of down-up chains related to the $(\alpha, \gamma)$-growth process we need the following definition:
\begin{defi}[Label swapping]\label{def:labelswapping}
  Let $A$ be a finite set.
  The operation of \textit{swapping labels $i \in A$ and $j \in A$ in $\tree[s] \in \T_{A}$} is the map
  \begin{align*}
    \left( \tree[s], i, j \right) \stackrel{\tau}{\mapsto} \tree \in \T_A
  \end{align*}
  where, if we let $v_i$ and $v_j$ denote the parents of $i$ and $j$, respectively, $\tree$ is obtained from $\tree[s]$ by replacing $e_{iv_i}$ and $e_{jv_j}$ with $e_{iv_j}$ and $e_{jv_i}$, respectively, in $\edge (\tree[s])$.
\end{defi}
Let us discuss two examples in the binary setting studied in~\cite{forman2018projections} and~\cite{RefWorks:doc:5b4cbc93e4b07f5746e47014}.
To this end, let $\tree[s] \in \T_{[n]}^{\bin}$, and define a transformation function, $f$, by setting $f(\tree[s], i) = \left(\tau\left( \tree[s], i, \tildei \right), \tildei \right)$, where
\begin{align*}
  &a \colon = \min \left\{ \text{leaves in 1st subtree on the ancestral line from}\ i \ \text{in}\ \tree[s] \right\}, \nonumber \\
  &b \colon = \min \left\{ \text{leaves in 2nd subtree on the ancestral line from}\ i \ \text{in}\ \tree[s] \right\},
\end{align*}
and $\tildei \colon = \max\left\{ i, a, b \right\}$.
Then the $(f, G)$-down-up chain where $G$ is the uniform growth rule is exactly the \textit{uniform chain} on $\T_{[n]}^\text{bin}$ without relabelling described in~\cite{forman2018projections}. 
If instead $G$ is the $\alpha$-growth rule, then the $(f, G)$-down-up chain with relabelling is the \textit{$\alpha$-chain} of~\cite{RefWorks:doc:5b4cbc93e4b07f5746e47014}.
\begin{thm}[Theorem 1 of~\cite{RefWorks:doc:5b4cbc93e4b07f5746e47014}]
  Let ${\left( T_n(m) \right)}_{m \in \N_0}$ denote the uniform chain on $\T_{[n]}^{\bin}$ and let $T_n$ denote the $n$th step of the uniform growth process.
  Then the unique invariant distribution of the uniform chain is that of $T_n$, i.e.\ the uniform distribution on $\T_{[n]}^{\bin}$.
\end{thm}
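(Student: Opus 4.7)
The plan is to reduce the claim to standard properties of Rémy's growth process. The state space $\T_{[n]}^{\bin}$ is finite; the chain is irreducible (iterated down-up moves reach any tree, since Rémy's up-step can place a leaf on any prescribed edge) and aperiodic (self-loops arise whenever the swap is trivial and Rémy re-inserts the deleted leaf onto the same edge), so a unique invariant distribution exists and it suffices to show that $\pi := \Unif(\T_{[n]}^{\bin})$ is invariant.

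I would use two well-known properties of Rémy's growth. First, applying Rémy's up-step to $\Unif(\T_{[n-1]}^{\bin})$ produces $\pi$. Second, $\pi$ has the sampling-consistency property: for any $j \in [n]$, if $T \sim \pi$ then $\mathring{\rho}(T, j)$ is uniformly distributed on binary trees with label set $[n]\setminus\{j\}$, independent of $j$. Given these two properties, it suffices to show that when $T_n(m) \sim \pi$, the down-step output $(T_n^{\downarrow}(m), \tilde{I})$ has the property that, conditional on $\tilde{I} = j$, $T_n^{\downarrow}(m)$ is uniform on binary trees with label set $[n]\setminus\{j\}$. The central observation is that since $I$ is drawn uniformly from $[n]$ and $\pi$ is label-exchangeable, the \emph{position} of leaf $I$ in $T_n(m)$ is a uniformly random leaf position, independent of the tree shape; after the swap, that same position is now occupied by label $\tilde{I}$, so deleting it amounts to removing a uniformly chosen leaf position, and sampling consistency yields the required conditional uniformity of the remaining labelled tree. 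Re-inserting a leaf with label $\tilde{I}$ via Rémy then recovers $\pi$.

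The main obstacle is that $\tilde{I} = \max\{I, a, b\}$ is neither uniform on $[n]$ nor independent of the shape, so one must carefully track the joint conditional law of $(T_n^{\downarrow}(m), \tilde{I})$ without the biased marginal of $\tilde{I}$ interfering with the uniformity of $T_n^\downarrow(m)$ within each fibre. The cleanest way to handle this is to lift the dynamics to a richer state space of planar binary trees (in the same spirit as the semi-planar construction of Section~3 of the present paper), on which the local-search rule is canonical with respect to the left-right order and the chain preserves a natural uniform reference measure; projecting via the map that forgets the planar order transfers invariance back to $\pi$ through an intertwining argument. Alternatively, one can verify detailed balance $\pi(\tree_1) K_n(\tree_1, \tree_2) = \pi(\tree_2) K_n(\tree_2, \tree_1)$ directly, by bijecting the swap-and-insert triples that drive $\tree_1 \to \tree_2$ with those driving the reverse, using Rémy's reversibility as the backbone of the combinatorial matching.
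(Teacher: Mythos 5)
Your overall architecture is the right one and matches the strategy behind the cited proof: uniqueness via irreducibility on a finite state space, the up-step mapping $\Unif(\T_{[n-1]}^{\bin})$ to $\Unif(\T_{[n]}^{\bin})$, and the reduction of invariance to the claim that, started from the uniform distribution, the down-step output $T_n^{\downarrow}(m)$ is conditionally uniform given the deleted label $\tilde{I}=j$. The gap is that your ``central observation'' does not establish this claim. Exchangeability does give that the \emph{position} of leaf $I$ is a uniform leaf position, and hence that the unlabelled shape after deletion is correct; but the object you must control is the \emph{labelled} tree $\mathring{\rho}\left(\tau\left(T_n(m),I,\tilde{I}\right),\tilde{I}\right)$, which is not a restriction of $T_n(m)$ to a label subset --- the swap has moved label $I$ to the former position of $\tilde{I}$, a position that is correlated with the tree through the local search. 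Sampling consistency therefore does not apply to it. Moreover, your argument nowhere uses the specific form $\tilde{I}=\max\{I,a,b\}$: if it were valid it would prove conditional uniformity for an arbitrary swap target (e.g.\ $\tilde{I}=a$), which is false --- under such a rule label $I$ would end up preferentially at ``first-subtree-minimum'' positions and the conditional law of the remaining labelled tree would be biased. The entire content of the theorem is that the $\max$ rule is exactly the one for which this bias cancels.

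The two escape routes you sketch do not close the gap. Detailed balance is not available as stated: the label dynamics are directional (the largest of $\{I,a,b\}$ is always the one deleted and re-inserted as a fresh uniform insertion), and the cited proof does not proceed by reversibility. The lifting/intertwining route is indeed in the spirit of how this paper treats the general $(\alpha,\gamma)$ case, but the substance of that route is precisely the missing lemma: one must characterise the event $\{\tilde{I}=j\}$ in terms of the growth process --- as the event that leaf $j$ was inserted in one of a few designated locations of $T_{j-1}$ and that leaves $j+1,\dots,n$ avoided a set of ``forbidden'' insertion sites of total weight exactly $1$ --- and deduce from this both that the event is independent of $T_{j-1}$ and that, conditionally on it, the swap-and-delete output is distributed as $T_{n-1}$. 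That is Lemma~\ref{lemma:condind} here (Lemma 4 of the cited work); without it, or an equivalent computation, your proof does not go through.
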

%
%
\section{A semi-planar growth process}
\label{sec:semiplanar_alphagamma}
In order to introduce an appropriate down-step in the multifurcating case we start by introducing a refined version of the $(\alpha, \gamma)$-growth process.
In the following we will, for any $c \in \N$, let $\SS_{[c]}$ denote the space of permutations of $[c]$.
We will require the existence of an element of $\SS_\emptyset$, and we will use the convention that this element is denoted by $0$.
Additionally, we will use the notation that $\SS_{[c]} \ni \sigma = {(l_j)}_{j \in [c]}$ if $\sigma(j) = l_j$ for each $j \in [c]$.
Recall one of the many ways of defining a planar tree:
\begin{defi}[Planar Trees]
  Let $A$ be a finite set and fix $\tree \in \T_A$.
  Let $\treesigma^* = {\left( \sigma_v^* \right)}_{v \in \branchpoints(\tree)}$ be a collection of permutations, such that for each $v \in \branchpoints (\tree)$, $\sigma_v^* \in \SS_{[c_v]}$, where $c_v$ is the number of children of $v$.
  We call $\that^* = (\tree, \treesigma^*)$ a \textit{planar tree} with \textit{tree shape} $\tree$, and denote by $\Thatspace_A^*$ the space of planar trees with leaves labelled by $A$.
\end{defi}
Let us introduce some vocabulary for planar trees that will enable us to work efficiently with these objects.
Let $\hat{\tree}^* = \left( \tree, \treesigma^* \right) \in \Thatspace_A^*$ be given, fix $v \in \branchpoints(\tree)$, and let $\that_1^{*v}, \ldots, \that_{c_v}^{*v}$ denote the planar subtrees rooted at $v$, enumerated in the order of least elements of the associated sets of leaf labels.
We say that $\that_i^{*v}$ is to the \textit{right} of $\that_j^{*v}$ if $\sigma_v^*(i) > \sigma_v^*(j)$ for any $1 \leq i \neq j \leq c_v$, and use the term \textit{right neighbour} if $\sigma_v^*(i) = \sigma_v^*(j) + 1$.
From the above it is also clear what we mean by the terms \textit{rightmost}, \textit{left} and \textit{leftmost} which are defined in the obvious fashion. 

Any planar version of the $(\alpha, \gamma)$-growth process will have to have a total weight of $\alpha - \gamma$ in a branch point with two children, and then iteratively adding a weight of $\alpha$ for every subsequent leaf being inserted into that branch point.
Hence, in a branch point with $c$ children there are only $c-1$ natural weights to use.
One way to incorporate this dynamic, is to designate the original two subtrees of the branch point a special role, and only allow insertions to the right of those subtrees.
These two subtrees are, by definition of a growth process, easily identifiable as the two subtrees with the lowest leaf labels.
If we insist that these two subtrees are located leftmost in the branch point, we can limit our attention to the following state space:
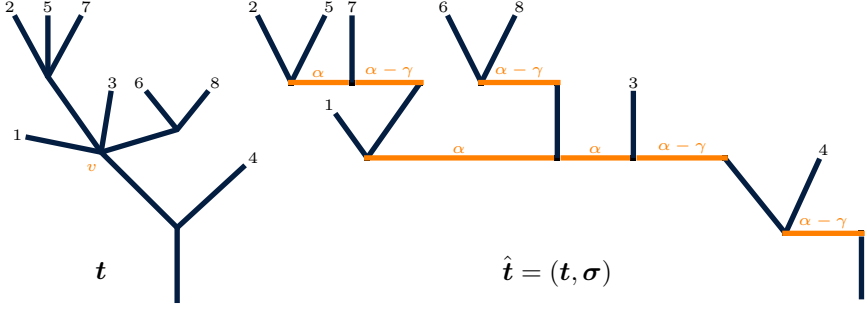
\begin{figure}[t]
  \centering
  \begin{tikzpicture}[
    bp/.style={rectangle, fill, inner sep=0pt, minimum width=2pt, minimum height=2pt},
    comment/.style={circle, draw, color=orange},
    leaf/.style={inner sep=1pt},
    weights/.style={auto, swap, orange, inner sep = 1pt},
    label/.style={text=oxfordblue},
    line/.style={-, line width = 2pt, color = oxfordblue},
    arrowline/.style={->, line width = 1pt, color = oxfordblue, >=stealth},
    scale=1
  ]
  \tikzstyle{every node}=[font=\tiny]

  %
  %

  \begin{scope}[xscale = -1]
    \node (0) at (0,0) {};
    \node (bp) at (0,1) {};
    \node[leaf] (4) at (-1,2) {$4$};
    \node (v) at (1,2) {};

    \node (bp2) at (0,2.3) {};
    \node[leaf] (8) at (-0.5, 3) {$8$};
    \node[leaf] (6) at (0.5, 3) {$6$};

    \node[leaf] (3) at (0.85,3) {$3$};

    \node (bp3) at (1.7,3) {};
    \node[leaf] (7) at (1.2, 4) {$7$};
    \node[leaf] (5) at (1.7, 4) {$5$};
    \node[leaf] (2) at (2.2, 4) {$2$};

    \node[leaf] (1) at (2.1,2.3) {$1$};

    \draw[line] (0.mid) to (bp.mid) to (4);
    \draw[line] (bp.mid) to (v.mid) to (bp2.mid) to (8);
    \draw[line] (bp2.mid) to (6);
    \draw[line] (v.mid) to (3);
    \draw[line] (v.mid) to (bp3.mid) to (7);
    \draw[line] (bp3.mid) to (5);
    \draw[line] (bp3.mid) to (2);
    \draw[line] (v.mid) to (1);

    \node[orange] at (v.south west) {$v$};

    \node at (1,0.5) {\normalsize $\tree$};
  \end{scope}
  %
  %

  \begin{scope}[shift = {(9,0)}, xscale = -1]

    \node (0) at (0,0) {};

    \node[bp] (bp_1) at (0,1) {};
    \node[bp] (bp_2) at (1,1) {};
    \node[leaf] (4) at (0.5,2.1) {$4$};

    \node[bp] (v_1) at (1.8,2) {};
    \node[bp] (v_2) at (3,2) {};
    \node[bp] (v_3) at (4,2) {};
    \node[bp] (v_4) at (6.5,2) {};

    \node[bp] (bp2_1) at (4,3) {};
    \node[bp] (bp2_2) at (5,3) {};
    \node[leaf] (8) at (4.5, 4) {$8$};
    \node[leaf] (6) at (5.5, 4) {$6$};

    \node[leaf] (3) at (3,3) {$3$};

    \node[bp] (bp3_1) at (5.8,3) {};
    \node[bp] (bp3_2) at (6.7,3) {};
    \node[bp] (bp3_3) at (7.5,3) {};
    \node[leaf] (7) at (6.7, 4) {$7$};
    \node[leaf] (5) at (7, 4) {$5$};
    \node[leaf] (2) at (8, 4) {$2$};

    \node[leaf] (1) at (7,2.7) {$1$};

    \draw[line] (0) to (bp_1);
    \draw[line] (bp_2.center) to (4);

    \draw[line] (bp_2.center) to (v_1.center);
    \draw[line] (v_2.center) to (3);
    \draw[line] (bp2_2.center) to (8);
    \draw[line] (bp2_2.center) to (6);

    \draw[line] (v_3.center) to (bp2_1.center);

    \draw[line] (v_4.center) to (bp3_1.center);
    \draw[line] (bp3_2.center) to (7);
    \draw[line] (bp3_3.center) to (5);
    \draw[line] (bp3_3.center) to (2);

    \draw[line] (v_4.center) to (1);

    \draw[line, orange] (bp_1.east) to node[weights] {$\alpha - \gamma$} (bp_2.west);
    \draw[line, orange] (v_1.east) to node[weights] {$\alpha - \gamma$} (v_2) to node[weights] {$\alpha$} (v_3) to node[weights] {$\alpha$} (v_4.west);
    \draw[line, orange] (bp2_1.east) to node[weights] {$\alpha - \gamma$} (bp2_2.west);
    \draw[line, orange] (bp3_1.east) to node[weights] {$\alpha - \gamma$} (bp3_2) to node[weights] {$\alpha$} (bp3_3.west);
    
    \node at (4,0.5) {\normalsize $\that = (\tree, \treesigma)$};
  \end{scope}
\end{tikzpicture}
  \caption{Illustration of a semi-planar tree with the allocation of weights within the branch points.
    The horizontal orange lines are expansions of the branch points illustrating the $c_v - 1$ insertable parts associated with branch point $v$.
    The only non-trivial element of $\treesigma$ is $\sigma_v$ characterized by $\sigma_v(1) = 2$ and $\sigma_v(2) = 1$, and the only other semi-planar tree with tree shape $\tree$, would come from swapping the position of the subtrees labelled by $\{3\}$ and $\{6,8\}$, such that $v$ would have the associated permutation $\sigma_v^\prime = (1,2)$.}
  \label{fig:semiplanar_element}
\end{figure}
\begin{defi}[Semi-planar Tree]\label{def:semiplanartree}
  Let $A$ be a finite set and fix $\tree \in \T_A$.
  Let $\treesigma = {\left( \sigma_v \right)}_{v \in \branchpoints(\tree)}$ be a collection of permutations, such that for each $v \in \branchpoints (\tree)$, $\sigma_v$ is a permutation of $\left[ c_v - 2 \right]$ where $c_v$ is the number of children of $v$.
  We call $\that = (\tree, \treesigma)$ a \textit{semi-planar tree}, and denote by $\Thatspace_A$ the space of semi-planar trees with leaves labelled by $A$.
\end{defi}
For an illustration of an element of $\Thatspace_{[8]}$, see Figure~\ref{fig:semiplanar_element}.
The space of semi-planar trees, $\Thatspace_A$, arises from $\Thatspace_A^*$ as follows:
\begin{enumerate}
  \item consider only the elements ${\that[s]}^* \in \Thatspace_A^*$ such that ${\that[s]}^* = (\tree[s], \treesigma^*)$ where for all $v \in \branchpoints(\tree[s])$ and $l \in \{1,2\}$ we have $\sigma_v^*(l) \in \{1, 2\}$, and
  \item identify $\that_1^* = (\tree, \treesigma^*)$ with $\that_2^* = (\tree, \treesigma^{**})$ if $\sigma_v^*(l) = \sigma_v^{**}(l)$ for all $v \in \branchpoints(\tree)$ and $3 \leq l \leq c_v$, where $c_v$ is the number of children of $v$.
\end{enumerate}
For each element $\that = (\tree, \treesigma) \in \Thatspace_A$ we will systematically pick a representative $\that^* = (\tree, \treesigma^*) \in \Thatspace_A^*$ by setting $\sigma_v^*(1) = 1$, $\sigma_v^*(2) = 2$ and $\sigma_v^*(l) = \sigma_v(l-2)$ for all $3 \leq l \leq c_v$ for each $v \in \branchpoints(\tree)$, where $c_v$ denotes the number of children of $v$.

It is clear that there is a surjective projection map from the space of semi-planar (or planar) trees to the space of trees, $\hat{\pi} \colon \Thatspace_A \to \T_A$ for any $A \subseteq \N$, defined by
\begin{align}\label{projectionmap}
  \hat{\pi} \left( \hat{\tree} \right) = \hat{\pi} \left( \tree, \treesigma \right) = \tree.
\end{align}
We will continue to use all the previous terminology such as edge set, set of branch points, and ancestral path for a semi-planar tree, $\that$, by which we will mean the corresponding versions in $\hat{\pi} ( \that )$. 
Before introducing a semi-planar version of the $(\alpha, \gamma)$-growth process, let us update the notion of inserting a leaf to the semi-planar setting, recalling that $\varphi$ denotes the insertion map of Definition~\ref{def:insertion_nonplanar}.
%
%
\begin{defi}[Insertion of a leaf in $\Thatspace_A$]\label{def:insertionplanar}
  Let $A \subset \N$ be a finite set, $\that[s] = \left( \tree[s], \treesigma \right) \in \Thatspace_A$, $j \in \N$, $x \in \insertablef[s]$ and $l \in \N_0$.
  The operation of inserting leaf $j$ to $x$ in location $l$ in $\tree[s]$ is the map $\left( \tree[s], x, l, j \right) \overset{\hat{\varphi}}{\mapsto} \that \in \Thatspace_{A \cup \{j\}}$ defined as follows. 
    \begin{itemize}
      \item If $x = e \in \edge ( \tree[s] )$, only $l = 0$ is allowed and $\that = \left( \varphi \left( \tree[s], e, j \right), \treesigma^\varphi \right)$ where $\sigma_{v'}^\varphi = 0$ if $v'$ is the parent of leaf $j$ in $\varphi ( \tree[s], e, j )$, and $\sigma_v^\varphi = \sigma_v$ for all other $v \in \branchpoints( \varphi ( \tree[s], e, j ) )$;
      \item If $x = v^\prime \in \branchpoints( \tree[s] )$ with $c \geq 2$ children, $l \in [c-1]$ is allowed and $\that = \left( \varphi \left( \tree[s], v^\prime, j \right), \treesigma^\varphi \right)$, where 
    \begin{align*}
      \sigma_{v^\prime}^\varphi ( l^\prime )
      =
      \begin{cases}
        \sigma_{v^\prime}(l^\prime) + 1 \left( \sigma_{v'}(l^\prime) \geq l \right) & \text{for all}\ l^\prime \in [c-2] \\
        l & \text{for}\ l^\prime = c-1,
      \end{cases}
    \end{align*}
    and $\sigma_v^\varphi = \sigma_v$ for all $v \in \branchpoints( \tree[s] ) \setminus \{v^\prime\}$.
    \end{itemize}
  \end{defi}
  Just as in the previous section, we will write $\hat{\varphi} ( \that[s], x, j, l )$ in both of the above cases, and then specify $x \in \edge (\tree[s])$ or $x \in \branchpoints(\tree[s])$ for the leaf insertion into the edge or vertex, respectively.
  Definition~\ref{def:insertionplanar} simply states, that whenever we insert a new leaf into a vertex, we can insert it to the right of the rightmost or in between any two children, with the exeption of the two leftmost, of that vertex.
  We are now ready to define a semi-planar version of the $(\alpha, \gamma)$-growth process:
\begin{defi}[Semi-planar $(\alpha, \gamma)$-growth process]\label{def:planaralphagamma}
  Fix $0 \leq \gamma \leq \alpha \leq 1$.
  Let $\That_1$ be the unique element of $\Thatspace_1$.
  For any $\nin$ construct $\That_{n+1}$ conditional on $\That_n = \that$, by setting 
    \begin{align}
        w_{x, l}
        =
        \begin{cases}
          1 - \alpha & \text{if}\ x \in \edge ( \that )\ \text{is external and}\ l = 0, \\
          \gamma & \text{if}\ x \in \edge ( \that )\ \text{is internal and}\ l = 0, \\
          \alpha & \text{if}\ x \in \branchpoints( \that )\ \text{with $c_x$ children and}\ l \in [c_x-2], \\
          \alpha - \gamma & \text{if}\ x \in \branchpoints( \that )\ \text{with $c_x$ children and}\ l = c_x-1, \\
        \end{cases}
        \label{weightsplanar}
    \end{align}
    and then defining the conditional distribution of $\That_{n+1}$ by
    \begin{align}
        \P \left( \That_{n+1} = \hat{\varphi} \left( \hat{\tree}, x, j, l \right) \Big| \ \That_n = \hat{\tree} \right)
        = \frac{w_{x,l}}{n - \alpha}
        \label{InsertionPlanarAlphaGamma}
    \end{align}
    for each $x \in \insertable \left( \that \right)$ and $l \in [c_x - 1]$ for $x \in \branchpoints (\that)$ and $l=0$ for $x \in \edge (\that)$.
    The sequence ${\left( \That_n \right)}_\nin$ is referred to as the \textit{semi-planar $(\alpha, \gamma)$-growth process}.
\end{defi}
The weights specified in~\eqref{weightsplanar} are such that the projection of the semi-planar $(\alpha, \gamma)$-growth process from $\Thatspace_{[n]}$ onto $\T_{[n]}$ exactly yields the (non-planar) $(\alpha, \gamma)$-growth process from Definition~\ref{def:growthprocess_alphagamma}.

In the semi-planar growth process, one can think of the additional structure of a branch point as a small comb tree rooted at the right end and with a leaf for every subtree, where naturally the two leftmost leaves connect to the same branch point, and where internal edges between branch points have weight $\alpha$ and the edge to the root has weight $\alpha - \gamma$ (see Figure~\ref{fig:semiplanar_element}).
In particular, we do not allow any insertions between or to the left of the two leftmost subtrees.
If $\That_n = \that \in \Thatspace_{[n]}$, it thus holds for any $v \in \branchpoints(\that)$ that if $i_1, \ldots, i_c$ denotes the least label in each of the subtrees rooted at $v$, enumerated in the order of least element, then $i_1$ and $i_2$ are always located in the two leftmost subtrees.

This is not the first definition of a binary tree growth process within the $(\alpha, \gamma)$-chain.
In~\cite{RefWorks:doc:5b4cbb5fe4b02dc0c79270af}, the authors use a colouring scheme of the edges to make the translation between the multifurcating and the binary setting, whereas we work with a semi-planar structure for the multifurcating trees.

Let us now investigate the left-to-right ordering of the subtrees in a branch point further.
For each $v \in \branchpoints(\that)$, let $i_1, \ldots, i_{c_v}$ denote the smallest leaf label in each of the $c_v$ subtrees rooted at $v$, enumerated in the order of least element, and let $\that_1, \ldots, \that_{c_v}$ denote the corresponding subtrees of $\that$.
Then, by definition of semi-planar trees, $\that_1$ and $\that_2$, are located leftmost in $v$.
The semi-planar $(\alpha, \gamma)$-growth process assigns a random location to each of the remaining $c_v - 2$ subtrees within the branch point, by inducing a permutation of $[c_v - 2]$, $\sigma_v$, associated with $v$.
Letting $\Sigma_v$ denote the random permutation induced by the growth process associated with $v$, and disregarding the two leftmost subtrees, we have that $\that_l$ is the $\Sigma_v(l - 2)$ leftmost subtree in $v$, for each $3 \leq l \leq c_v$. \\

\noindent
We can characterize $\Sigma_v$ in the following way:
\begin{lemma}
  \label{lemma:internalstructure}
  Let $\That_n$ be the $n$th step of the semi-planar $(\alpha, \gamma)$-growth process, and fix $\tree \in \T_{[n]}$.
  Then ${\left( \Sigma_v \right)}_{v \in \branchpoints(\tree)}$ is conditionally independent given $\hat{\pi} \left( \That_n \right) = \tree$, and 
  \begin{align}
    \forall v \in \branchpoints(\tree) \colon \P \left( \Sigma_v = \sigma \ \middle \vert \ \hat{\pi} \left( \That_n \right) = \tree \right) 
    = p^{c_v - 2}_{\alpha, \alpha - \gamma}( \sigma ),
    \label{eq:internalstructure}
  \end{align}
  where $p^{c_v - 2}_{\alpha, \alpha - \gamma}$ is the probability function of the random permutation of $[c_v-2]$ induced by the random order of the tables in an $(\alpha, \alpha - \gamma)$-$\ocrp$ with $c_v-2$ tables, described by~\eqref{eq:ocrp_tableorder}.
\end{lemma}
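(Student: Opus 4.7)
The plan is to decouple the semi-planar $(\alpha,\gamma)$-growth process into its projection onto the non-planar shape and the sequence of positional choices made whenever a new child is added to a branch point, and to identify the latter with the arrival of tables in an $(\alpha,\alpha-\gamma)$-$\ocrp$. The total weight at a branch point $v$ with $c$ children in~\eqref{weightsplanar} is $(c-2)\alpha + (\alpha-\gamma) = (c-1)\alpha-\gamma$, which equals the weight assigned to $v$ in the non-planar rule of Definition~\ref{def:growthprocess_alphagamma}. Hence $\hat{\pi}(\That_n)$ is the $(\alpha,\gamma)$-growth process, and conditioning on $\hat{\pi}(\That_n)=\tree$ reduces to conditioning on the deterministic sequence of insertable parts chosen at each step; since the positional choices do not influence the projection, this conditioning leaves them unbiased.

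Fix $v\in\branchpoints(\tree)$ (the case $c_v=2$ is vacuous, since $\Sigma_v$ lies in the singleton $\SS_\emptyset=\{0\}$), and assume $c_v\geq 3$. The two leftmost subtrees of $v$ are precisely those present when $v$ was created by splitting an edge, because all subsequently added leaves carry strictly larger labels. The remaining $c_v-2$ subtrees arise, one at a time, from insertion events of the form $x_m=v$ in Definition~\ref{def:insertionplanar}; enumerate them by birth order $j=1,\ldots,c_v-2$. At the birth of the $j$-th such subtree, $v$ has $j+1$ existing children and $j$ available positions, and by~\eqref{weightsplanar}, conditional on $x_m=v$, the new subtree is placed at position $l\in[j-1]$ with probability $\alpha/(j\alpha-\gamma)$ and at the rightmost position $l=j$ with probability $(\alpha-\gamma)/(j\alpha-\gamma)$. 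This coincides exactly with the probability that the $j$-th table in an $(\alpha,\alpha-\gamma)$-$\ocrp$ is inserted at each of its $j$ slots (each non-rightmost slot carrying weight $\alpha$ and the rightmost carrying $\theta=\alpha-\gamma$).

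The induced left-to-right permutation $\Sigma_v$ therefore has the same distribution as the table-order permutation of an $(\alpha,\alpha-\gamma)$-$\ocrp$ with $c_v-2$ tables, yielding~\eqref{eq:internalstructure} via~\eqref{eq:ocrp_tableorder}. The conditional independence of $(\Sigma_v)_{v\in\branchpoints(\tree)}$ follows at once, since each positional draw uses independent randomness and touches only a single branch point. The only delicate point I expect is formalising that conditioning on $\hat{\pi}(\That_n)=\tree$ genuinely leaves the within-branch-point choices undisturbed; I would handle this by factorising $\P(\That_n=\that)$ as a product over $m=1,\ldots,n-1$ of (probability of choosing insertable part $x_m$) times (conditional probability of position $l_m$ given $x_m$), noting that the first factor depends only on the non-planar projection.
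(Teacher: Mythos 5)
Your proposal is correct and follows essentially the same route as the paper's proof: both identify the sequential placement of the $c_v-2$ non-leftmost subtrees of $v$ (each non-rightmost slot carrying weight $\alpha$, the rightmost carrying $\alpha-\gamma$) with the table-arrival dynamics of an $(\alpha,\alpha-\gamma)$-$\ocrp$, and derive the conditional independence from the independence of the insertion steps. Your explicit factorisation of $\P(\That_n=\that)$ into shape choices and positional choices is a slightly more careful justification of the conditioning step than the paper gives, but it is the same argument.
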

\begin{proof}
  The assertion that ${\left( \Sigma_v \right)}_{v \in \branchpoints{\tree}}$ is conditionally independent given $\hat{\pi} \left( \That_n \right) = \tree$ is a consequence of the independence of the leaf insertions.
  Now fix $v \in \branchpoints (\tree)$, and observe how the subtrees rooted at $v$ are analogous to the tables in an ordered Chinese Restaurant.
  Firstly, note that there is nothing to prove for $c_v = 2$, so fix $c_v = 3$.
  By construction the two subtrees with the smallest labels will be located leftmost in $v$ and there will be a single subtree to the right of them.
  That subtree will have weight $\alpha - \gamma$ to the right and $\alpha$ to the left.
  This corresponds exactly to the $(\alpha, \alpha - \gamma)$-oCRP with $1$ table.
  For any other $c_v > 3$, let us disregard the two leftmost subtrees, only focus on the remaining $c_v-2$, and consider how these have been placed by the growth process.
  Each insertion corresponds to a new customer in the ordered Chinese Restaurant analogy, and each subtree corresponds to a table.
  Amongst these subtrees, there will be weight $\alpha$ to the left of the leftmost subtree as well as in between any neighbouring pair.
  Furthermore there will be weight $\alpha - \gamma$ to the right of the rightmost subtree.
  This is exactly the same weight specification as in an $(\alpha, \alpha - \gamma)$-oCRP with $c_v-2$ tables, which finishes the proof.
\end{proof}
Lemma~\ref{lemma:internalstructure} naturally characterizes a conditional distribution on $\Thatspace_{[n]}$.
Fix some $\tree \in \T_{[n]}$ and, for each $v \in \branchpoints(\tree)$, let $c_v$ denote the number of children of $v$.
Then
\begin{align}
  \label{eq:condprop_planargivenshape}
  \P \left( \That_n = \that \ \bigg\vert \ \hat{\pi} \left( \That_n \right) = \tree \right) 
  &= \prod_{v \in \branchpoints(\tree)} \P \left( \Sigma_v = \sigma_v \ \bigg\vert \ \hat{\pi} \left( \That_n \right) = \tree \right) \\
  &= \prod_{v \in \branchpoints(\tree)} p^{c_v-2}_{\alpha, \alpha - \gamma} \left( \sigma_v \right) \nonumber.
\end{align}
%
%
\section{A semi-planar down-up chain}
The motivation for introducing the semi-planar version of the $(\alpha, \gamma)$-growth model was fundamentally to get an ordering of the subtrees within a multifurcating branch point.
This was done to enable a generalization of the $\alpha$-chain described in the introduction, by generalizing the local search from a designated leaf to a multifurcating setting where there might be more than one subtree to search in the first branch point on the ancestral line.

To avoid any ambiguity, we will update the notions of swapping and deleting leaves, respectively, before defining a semi-planar down-up chain.
The sole reason for defining the label swapping map $\hat{\tau}$ on the space of planar trees, $\Thatspace_A^*$, rather than directly on $\Thatspace_A$, is that certain label swaps will bring us outside of the latter state space.
However, the definition can easily be applied to elements of $\Thatspace_A$ by using the identification described just after Definition~\ref{def:semiplanartree}.
See Figure~\ref{fig:labelswapping} for an illustration.
Recall that $\tau$ denotes the label swapping map on $\T_{[n]}$ from Definition~\ref{def:labelswapping}.
\begin{defi}[Label swapping in $\Thatspace_A^*$]\label{def:labelswappingplanar}
  Let $A$ be a finite set and fix $\that^* = (\tree, \treesigma^*) \in \Thatspace_A^*$.
  The operation of \textit{swapping labels} is a map $\left( \that^*, i, j \right) \stackrel{\hat{\tau}}{\mapsto} \left( \tau(\tree, i, j), \treesigma^{* \tau} \right)$ for any pair $i,j \in [n]$ that satisfies that if $v \in \branchpoints (\tree)$ is the parent of $i$, and $i_1, \ldots, i_{c_v}$ denotes the smallest leaf label of the subtrees of $v$ in $\tree$, enumerated in increasing order, then $j \in \{i_1, \ldots, i_{c_v}\}$ if $c_v > 2$ or $j \in \{i_1, i_2, b\}$ if $c_v = 2$ where $b$ is the smallest leaf label in the second spinal bush on the ancestral line from $i$. 
  If $c_v > 2$, $i = i_{l_i}$ and $j = i_{l_j}$ some $l_i, l_j \in [c_v]$, then for each $l \in [c_v]$ let
  \begin{align*}
    \sigma_v^{* \tau} (l)
    = \begin{cases}
      \sigma_v^* (l_i) & \text{if}\ l = l_j, \\
      \sigma_v^* (l+1) & \text{if}\ l_i \leq l < l_j, \\
      \sigma_v^* (l-1) & \text{if}\ l_j < l \leq l_i, \\
      \sigma_v^* (l) & \text{otherwise},
    \end{cases}
  \end{align*}
  and $\sigma_{v^\prime}^{* \tau} = \sigma_{v^\prime}^*$ for all other $v^\prime \in \branchpoints (\tree)$.
  If $c_v \leq 2$, let $w$ denote the grandparent of $i$, and define $\sigma_w^*$ analogously to the above, with the slight alteration that $l_i$ and $l_j$ denotes the ranks of the subtrees containing $i$ and $j$, respectively, in $w$, in the order of least elements.
\end{defi}
The above definition seems technical, but it simply states that all subtrees in $v_i$ and $v_j$ keep the positions, with the subtree containing $j$ in $v_j$ taking the position of the subtree containing $i$ in $v_i$ and vice versa.
  The label swapping will be used prior to deleting a leaf in the semi-planar down-up chain defined in this section.
  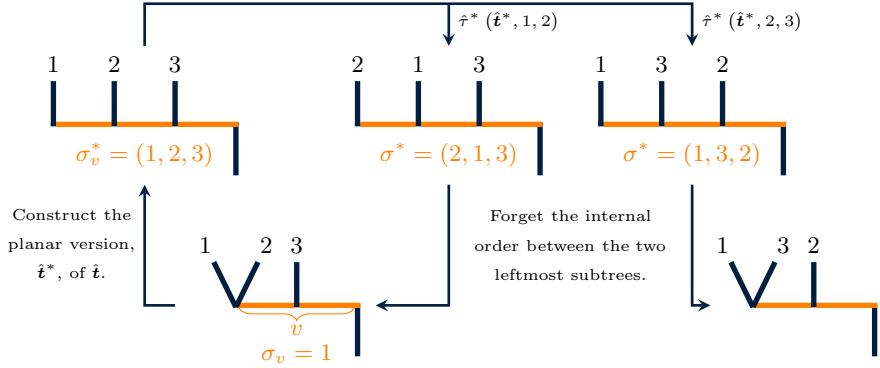
\begin{figure}[t]
    \centering
    \begin{tikzpicture}[%
    bp/.style={rectangle, draw, color=oxfordblue, inner sep=0pt, minimum width=1.6pt},
    comment/.style={circle, draw, color=orange},
    label/.style={text=oxfordblue},
    line/.style={-, line width = 2pt, color = oxfordblue},
    arrowline/.style={->, line width = 1pt, color = oxfordblue, >=stealth},
    scale=0.8
  ]
  \tikzstyle{every node}=[font=\small]

  \begin{scope}[shift = {(1,0)}, xscale = -1]
    \node (0) at (0,0) {};
    \node[bp] (bp_start) at (0,1) {};
    \node[bp] (bp_mid) at (1, 1) {};
    \node[bp] (bp_end) at (2, 1) {};
    \node (1) at (2.5, 2) {$1$};
    \node (2) at (1.5, 2) {$2$};
    \node (3) at (1, 2) {$3$};

    \draw[line] (0) to (bp_start);
    \draw[line, color = orange] (bp_start.east) to (bp_mid) to (bp_end.east);
    \draw[line] (bp_mid.south) to (3);
    \draw[line] (bp_end.south) to (1);
    \draw[line] (bp_end.south) to (2);

    \draw [decorate,decoration={brace,amplitude=5pt}, orange] (bp_start.south west) to (bp_end.south east);
    \node [orange] at (1, 0.6) {$v$};
    \node [orange] at (1, 0.2) {$\sigma_v = 1$};
  \end{scope}
  \begin{scope}[shift = {(-1,3)}, xscale = -1]
    \node (0) at (0,0) {};
    \node[bp] (bp_start) at (0,1) {};
    \node[bp] (bp_mid) at (1, 1) {};
    \node[bp] (bp_end) at (2, 1) {};
    \node[bp] (bp_end2) at (3, 1) {};
    \node (1) at (3, 2) {$1$};
    \node (2) at (2, 2) {$2$};
    \node (3) at (1, 2) {$3$};

    \draw[line] (0) to (bp_start);
    \draw[line, color = orange] (bp_start.east) to (bp_mid) to (bp_end) to (bp_end2);
    \draw[line] (bp_mid.south) to (3);
    \draw[line] (bp_end2.south) to (1);
    \draw[line] (bp_end.south) to (2);

    \node [orange] at (1.5, 0.5) {$\sigma_v^* = (1,2,3)$};
  \end{scope}
  \begin{scope}[shift = {(4,3)}, xscale = -1]
    \node (0) at (0,0) {};
    \node[bp] (bp_start) at (0,1) {};
    \node[bp] (bp_mid) at (1, 1) {};
    \node[bp] (bp_end) at (2, 1) {};
    \node[bp] (bp_end2) at (3, 1) {};
    \node (2) at (3, 2) {$2$};
    \node (1) at (2, 2) {$1$};
    \node (3) at (1, 2) {$3$};

    \draw[line] (0) to (bp_start);
    \draw[line, color = orange] (bp_start.east) to (bp_mid) to (bp_end) to (bp_end2);
    \draw[line] (bp_mid.south) to (3);
    \draw[line] (bp_end2.south) to (2);
    \draw[line] (bp_end.south) to (1);

    \node [orange] at (1.5, 0.5) {$\sigma^* = (2,1,3)$};
  \end{scope}
  \begin{scope}[shift = {(8,3)}, xscale = -1]
    \node (0) at (0,0) {};
    \node[bp] (bp_start) at (0,1) {};
    \node[bp] (bp_mid) at (1, 1) {};
    \node[bp] (bp_end) at (2, 1) {};
    \node[bp] (bp_end2) at (3, 1) {};
    \node (1) at (3, 2) {$1$};
    \node (3) at (2, 2) {$3$};
    \node (2) at (1, 2) {$2$};

    \draw[line] (0) to (bp_start);
    \draw[line, color = orange] (bp_start.east) to (bp_mid) to (bp_end) to (bp_end2);
    \draw[line] (bp_mid.south) to (2);
    \draw[line] (bp_end2.south) to (1);
    \draw[line] (bp_end.south) to (3);

    \node [orange] at (1.5, 0.5) {$\sigma^* = (1,3,2)$};
  \end{scope}
  \begin{scope}[shift = {(9.5,0)}, xscale = -1]
    \node (0) at (0,0) {};
    \node[bp] (bp_start) at (0,1) {};
    \node[bp] (bp_mid) at (1, 1) {};
    \node[bp] (bp_end) at (2, 1) {};
    \node (1) at (2.5, 2) {$1$};
    \node (3) at (1.5, 2) {$3$};
    \node (2) at (1, 2) {$2$};

    \draw[line] (0) to (bp_start);
    \draw[line, color = orange] (bp_start.east) to (bp_mid) to (bp_end.east);
    \draw[line] (bp_mid.south) to (2);
    \draw[line] (bp_end.south) to (1);
    \draw[line] (bp_end.south) to (3);
  \end{scope}

  \draw[arrowline] (-2, 1) to (-2.5, 1) to (-2.5, 3); 
  \draw[arrowline] (-2.5, 5.3) to (-2.5, 6) to (6.5, 6) to (6.5, 5.3); 
  \draw[arrowline] (2.5, 6) to (2.5, 5.3); 
  \draw[arrowline] (2.5, 3) to (2.5, 1) to (1.3, 1); 
  \draw[arrowline] (6.5, 3) to (6.5, 1) to (6.8, 1); 

  \node [anchor = west] (tau1) at (2.5, 5.7) {\tiny $\hat{\tau}^* \left( \that^*, 1, 2 \right)$};
  \node [anchor = west] (tau2) at (6.5, 5.7) {\tiny $\hat{\tau}^* \left( \that^*, 2, 3 \right)$};
  \node [align = center] at (4.5, 2) {\tiny Forget the internal \\ \tiny order between the two \\ \tiny leftmost subtrees.};
  \node [align = center, anchor = east] at (-2.5, 2) {\tiny Construct the \\ \tiny planar version, \\ \tiny $\that^*$, of $\that$.};
\end{tikzpicture}    
    \caption{Swapping labels in $\Thatspace_{[3]}^*$.
      The orange “edges” is an illustrative expansion of the branch point $v$ so that the left-to-right ordering of the subtrees of $v$ becomes apparent.
      Thus, all semi-planar and planar trees depicted above have the same tree shape, but have different permutations associated to $v$.
      Label swapping in $\that$ (bottom left) occurs by constructing the planar version, $\that^*$ (top left), and then swapping the labels in the planar version.
      Swapping labels $1$ and $2$ yields a planar tree corresponding to a semi-planar tree since the two subtrees with the lowest labels are located leftmost.
      This is not the case if we swap labels $2$ and $3$, and thus the bottom right is not a semi-planar tree, i.e.\ not an element of $\Thatspace_{[3]}$.
    }
    \label{fig:labelswapping}
  \end{figure}
  \fxnote{Look at Matthias comments.}
  Swaps like this will occur in the down-up chain, but the subsequent deletion of a leaf will make sure we end up with a semi-planar tree again.
%
\begin{defi}[Deletion of a leaf in $\Thatspace_A^*$]\label{deletionplanaraug}
    Let $A$ be a finite set and fix $\that^* \in \Thatspace_A^*$.
    The operation of deleting leaf $j \in A$ from $\that^*$ is the map
    \begin{align}
        \left( \that^*, j \right)
        = \left( \tree, \treesigma^*, j \right)
        \overset{\mathring{\hat{\rho}}^*}{\mapsto}
        \left( \mathring{\rho} \left( \tree, j \right), \treesigma^{* \rho} \right)
        \in \Thatspace_{A \setminus \{j\}}^*
        \label{eq:deletionmapplanaraug}
    \end{align}
    where $\sigma_u^{* \rho} = \sigma_u^*$ for all $u \neq v$, $v$ is the parent of $j$ in $\that$, and, if $c_v > 2$,
    \begin{align}
      \label{eq:deletionpermutation}
      \sigma_v^{* \rho} (i)
      =
      \sigma_v^*(i) - 1 \left( \sigma_v^*(i) > \sigma_v^* (i_j) \right)
    \end{align}
    for all $i \in [c_v - 1]$, where $c_v$ is the number of children of $v$ in $\that$, and $i_j$ is the rank of $j$ amongst the smallest leaf labels of the subtrees rooted at $v$ in $\that$.
    As in Definition~\ref{def:leaf_deletion}, we denote by $\hat{\rho}^*$ the map that, after applying $\mathring{\hat{\rho}}^*$, subsequently relabels $A \setminus \{j\}$ by $[\# A - 1]$ using the increasing bijection.
\end{defi}
Of course, \eqref{eq:deletionpermutation} is not needed in the case where $c_v = 2$ since the branch point will completely disappear upon the deletion of leaf $j$, and the corresponding permutation will thus not be included in $\treesigma^{* \rho}$.
The above definition states, that if $\that_{l}^*$ and $\that_{r}^*$ are the neighbouring subtrees to the left and right, respectively, of leaf $j$ in $\that^*$ before deletion, then $\that_l^*$ is the left neighbour of $\that_r^*$ after the deletion.
\begin{defi}[Semi-planar $(\alpha, \gamma)$-chain]\label{def:planaralphagammachain}
  Fix $n \geq 2$.
  The \textit{semi-planar $(\alpha, \gamma)$-chain} on $\Thatspace_{[n]}$ is an $(f, (\alpha, \gamma))$-down-up chain where $f \colon \Thatspace_{[n]} \times [n] \to \Thatspace_{[n]}^* \times [n]$ is defined by setting $f(\that, i) = \left( \hat{\tau}^* \left( \that, i, \tildei \right), \tildei \right)$, with $\tildei = \max \left\{ i, a, b \right\}$, where we by $l_i$ denote the rank of $i$ amongst the least leaf labels of the $c_v$ subtrees of the parent $v$ of $i$ in $\that$, set
  \begin{align*}
    &a = \min
    \begin{Bmatrix}
      \text{leaf labels in the first spinal bush} \\ \text{on the ancestral line from leaf}\ i
    \end{Bmatrix},
  \end{align*}
  and
  \begin{align*}
    &b = 
    \begin{cases}
      \min
      \begin{Bmatrix}
        \text{leaf labels in the second spinal bush} \\ \text{on the ancestral line from leaf}\ i
      \end{Bmatrix}
      & \text{if $c_v = 2$}, \\
      \min
      \begin{Bmatrix}
        \text{leaf labels in the first subtree not} \\ \text{containing $a$ to the right of leaf $i$}
      \end{Bmatrix}
      & \text{if $c_v > 2$, $l_i \leq 2$}, \\
      \min
      \begin{Bmatrix}
        \text{leaf labels in the first subtree not} \\ \text{containing $a$ to the left of leaf $i$}
      \end{Bmatrix}
      & \text{if $c_v > 2$, $l_i > 2$}, \\
    \end{cases}
  \end{align*}
  with the convention that $b = 0$ if $c_v = 2$ and the ancestral line from leaf $i$ has only one spinal bush.
\end{defi}
Firstly, recall that we can apply $\hat{\tau}$, and hence $f$, to elements of $\Thatspace_{[n]}$ (instead of $\Thatspace_{[n]}^*$) by using the canonical way of translating a semi-planar tree to a planar tree described just after Definition~\ref{def:semiplanartree}. 
However, it is not clear a priori that carrying out the down-step of the above chain started from a semi-planar tree again yields one such.
This is needed for the above to be well-defined as a Markov chain on $\Thatspace_{[n]}$.
The following result shows that this is indeed the case:
\begin{lemma}
  Let $A$ be a finite set and fix $\that \in \Thatspace_A$.
For any $i \in A$ it holds that $\mathring{\hat{\rho}}^* \left( \hat{\tau}^* (\that, i, \tildei), \tildei \right) \in \Thatspace_{A \setminus \{\tildei\}}$, where $\tildei$ is defined as in Definition~\ref{def:planaralphagammachain}.
  \label{lemma:downstepsemiplanaroutput}
\end{lemma}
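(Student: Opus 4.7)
The plan is to verify directly that after the planar swap and planar deletion, the resulting planar tree still satisfies the semi-planar condition: at every branch point the two leftmost subtrees are precisely the two subtrees with the smallest subtree-minima. I organize the case analysis by the number $c_v$ of children of the parent $v$ of $i$ in $\that$, and when $c_v > 2$ by the rank $l_i$ of $i$ among the minima $i_1 < \cdots < i_{c_v}$ of $v$'s subtrees.

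The first step is to observe that only a small set of branch points can be affected: $v$ itself; the grandparent $v'$ of $i$ when $c_v = 2$ (in which case $v$ contracts away during the deletion); and every branch point $w$ strictly between $v$ and the original leaf $\tildei$. At each such intermediate $w$ the swap replaces the single occurrence of $\tildei$ by $i$ inside one subtree of $w$; because $\tildei$ is the minimum of that subtree and $i < \tildei$, the new minimum there is $i$, still the strict minimum at $w$, and no other subtree of $w$ is touched. The semi-planar condition at $w$ is therefore preserved for free, and the verification reduces to $v$ (and, if $c_v = 2$, to $v'$).

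For $c_v = 2$, the $v$-subtree contracts to the sibling-subtree of $i$ (with minimum $a$). The three sub-cases $\tildei \in \{i,a,b\}$ dictated by $\tildei = \max\{i,a,b\}$ are checked individually: in each, the two positions at $v'$ that previously carried the two smallest minima end up carrying the two smallest minima of the modified tree. For instance, when $\tildei = b$, the former $v$-position of $v'$ inherits minimum $a$ while the position that held $b$ in the second spinal bush inherits minimum $i$; since $b$ is the minimum of the second spinal bush and $b > a, i$, all remaining mins at $v'$ strictly exceed both $a$ and $i$, so the pair $\{a, i\}$ at the two formerly leftmost positions is precisely the pair of the two smallest minima at $v'$ in the new tree. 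The sub-cases $\tildei \in \{i, a\}$ are analogous and in fact require no relocation of the minima.

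The main obstacle is the case $c_v > 2$ with $l_i \le 2$. Working in the systematic planar representative of $\that$ — in which the subtrees of $v$ with minima $i_1$ and $i_2$, call them $S_1$ and $S_2$, occupy positions $1$ and $2$ — Definition~\ref{def:planaralphagammachain}'s prescription that $b$ is the minimum of the first subtree to the right of $i$ not containing $a$ forces this subtree to be the one at position $3$ of $v$, call it $S_{l_b}$. Then $\tildei = i_{l_b}$, and after the swap, position $l_i$ of $v$ holds the single leaf labelled $i_{l_b}$ while position $3$ holds $S_{l_b}$ with new minimum $i_{l_i}$. The deletion of the leaf $i_{l_b}$ together with the planar renumbering from Definition~\ref{deletionplanaraug} removes position $l_i$ and slides old position $3$ into new position $2$; the other of the former positions $\{1,2\}$ becomes new position $1$. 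New positions $1$ and $2$ therefore carry minima $\{i_{3 - l_i}, i_{l_i}\} = \{i_1, i_2\}$, the two smallest remaining minima at $v$, establishing the semi-planar condition at $v$. The easier case $l_i \ge 3$ proceeds exactly as in the earlier observation: the swap and deletion act only among positions $\ge 3$, so $S_1$ and $S_2$ stay put at positions $1$ and $2$. Assembling the cases completes the proof.
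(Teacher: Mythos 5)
Your proof is correct and follows essentially the same route as the paper's: identify that only the parent $v$ (and the grandparent $v'$ when $c_v=2$, plus the branch points on the path down to the original position of $\tildei$, where replacing the minimum $\tildei$ by $i\le\tildei$ changes nothing) can be affected, then verify the semi-planar condition case by case according to $c_v$ and the rank of $i$ at $v$. Your bookkeeping of planar positions in the case $c_v>2$, $l_i\le 2$ is in fact somewhat more explicit than the paper's, but the argument is the same.
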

\begin{proof}
  We have to check that for every branch point in $\mathring{\hat{\rho}} \left( \hat{\tau}^* (\that, i, \tildei), \tildei \right)$, the two subtrees with the smallest leaf labels, respectively, are located leftmost.
  From the definition of the semi-planar $(\alpha, \gamma)$-chain it is clear that the only two branch points in $\that$ where any changes can take place are the parent $v$ and potentially the grandparent $v'$ of leaf $i$ in $\that$, since all other branch points will remain unchanged as $i \leq \tildei$ and $\tildei$ is the smallest leaf in its subtree.
  We now split up into cases based on the location of leaf $i$ in $v \in \branchpoints (\that^*)$, where $\that^*$ is the canonical representative of $\that$ in $\Thatspace_A^*$ described just after Definition~\ref{def:semiplanartree}.

  If $i$ is not one of the two leftmost subtrees in $v$, there is no option of swapping with the smallest leaf label in either of the two leftmost subtrees, as they are both smaller than $i$ by definition of a semi-planar tree.

  Now consider the case where $i$ is placed leftmost in $v$, with $v$ having more than two children.
  In $\that^*$, $i$ will then be placed leftmost or second leftmost in $v$.
  In either case $\tildei = b$ will be the smallest label in the third leftmost subtree of $v$ in $\that^*$.
  $\hat{\tau}^* (\that, i, \tildei) \in \Thatspace_A^*$ will not correspond to an element of $\Thatspace_A$, but consider the ordering of the subtrees of $v$ in this planar tree.
  $i$ will have retained its rank ($1$ or $2$) amongst the least labels of the subtrees of $v$, and will now be located in the third leftmost subtree.
  Upon the deletion of $\tildei$ in $\hat{\tau}^* (\that, i, \tildei)$ the two leftmost subtrees will again be the ones containing the two smallest of the least leaf labels of the subtrees of $v$.


  The last case is when $i$ is placed leftmost in $v$, with $v$ only having two children.
  Here, $b$ will always be found in $v'$, the parent of $v$, and no matter which swap occurs, the branch point $v$ will disappear upon deletion of $\tildei$.
  Hence we only need to check that the new ordering of subtrees in $v'$ is acceptable.
  If $\tildei \in \{a, i\}$ there is nothing to prove, as this means that the two subtrees with the lowest labels in $v'$ will not be affected by the swapping of $i$ and $\tildei$ and subsequent deletion of $\tildei$.
  If $\tildei = b$, this means that $b$ was the smallest leaf label (not in a subtree of $v$) in $v'$ and that $a$ was even smaller.
  Hence, by definition of the $(\alpha, \gamma)$-growth rule, the smallest labels in the two leftmost subtrees of $v'$ after swapping and deleting, respectively, will be $a$ and $i$.
  
  In all cases we conclude that $\mathring{\hat{\rho}}^* \left(  \hat{\tau} (\that, i, \tildei), \tildei \right) \in \Thatspace_{A \setminus \{\tildei\}}$.
\end{proof}
We will approach the proof of Theorem~\ref{thm:planarstationarity} via the following technical result.
The statement of the lemma as well as the proof technique is an adaptation of Lemma $4$ and the proof of Theorem $1$ in~\cite{RefWorks:doc:5b4cbc93e4b07f5746e47014}, designed to work in the case where $\alpha = \gamma = \frac{1}{2}$.
It is possible to prove that the stationary distribution of a down-up with the transition kernel described by Proposition~\ref{prop:nonplanar_alphagamma_chain} is the distribution of the the $n$th step of the $(\alpha, \gamma)$-growth rule without any reference to a semi-planar structure, thus making this lemma obsolete, in the case where $\gamma = 1 - \alpha$ for $\alpha \in [0,1]$.
However, this falls outside the scope of this exposition.
\begin{lemma}\label{lemma:condind}
  Fix $\nin$ and let ${\left( \That_m \right)}_{1 \leq m \leq  n}$ be the first $n$ steps of the semi-planar $(\alpha, \gamma)$-growth process.
    Fix $1 \leq i \leq \tildei \leq n$ and define
    \begin{align}
      E_{i, \tildei} = \left\{ \tildei = \max\left\{i, a, b\right\} \text{in}\ \That_n \right\},
        \label{eq:condindlemma}
    \end{align}
    where $a$ and $b$ are as in Definition~\ref{def:planaralphagammachain}.
    Then $E_{i, \tildei} \independent \That_{\tildei - 1}$ and, conditional on the event $E_{i, \tildei}$, $\hat{\rho}^* \left( \hat{\tau}^* \left( \That_n, i, \tildei \right), \tildei \right)$ and $\That_{n-1}$ have the same distribution.
\end{lemma}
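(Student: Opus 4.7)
The plan is to proceed by induction on $n - \tildei \geq 0$, leveraging the sequential structure of the semi-planar $(\alpha, \gamma)$-growth process. The key observation throughout is that, since every leaf inserted after step $\tildei$ carries a label strictly larger than $\tildei$, such an insertion cannot reduce the minima $a$ or $b$ defining the event $E_{i, \tildei}$; only insertions that restructure the local topology around leaf $i$ can interact with the event.

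For the base case $n = \tildei$, I condition on $\That_{\tildei - 1}$ and enumerate the insertion positions $(x, l)$ of leaf $\tildei$ that produce an $E_{i, \tildei}$-satisfying $\That_{\tildei}$. A short case analysis on whether the parent $v$ of $i$ in $\That_{\tildei - 1}$ is binary or multifurcating, and on the semi-planar rank $l_i$ of $i$ among the children of $v$, identifies the admissible insertions as: (i) insertion of $\tildei$ onto the leaf edge $e_{i \downarrow}$, producing $\tildei = a$ with weight $1 - \alpha$; and (ii) insertion of $\tildei$ onto a specific internal edge or into a specific semi-planar slot near $v$, producing $\tildei = b$ with weight $\gamma$, $\alpha$, or $\alpha - \gamma$ according to the slot. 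A direct calculation with the weights in~\eqref{weightsplanar} shows that the total admissible weight is the same local function in every configuration of $\That_{\tildei - 1}$ and, after normalization by $\tildei - 1 - \alpha$, yields a probability independent of $\That_{\tildei - 1}$. In every admissible case the operation $\hat{\rho}^*(\hat{\tau}^*(\cdot, i, \tildei), \tildei)$ exactly inverts the insertion, recovering $\That_{\tildei - 1}$, which gives the distributional claim in the base case.

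For the inductive step, fix $n > \tildei$ and assume the lemma for $n - 1$. Condition on $\That_{n-1}$ and write $\That_n = \hat{\varphi}(\That_{n-1}, X_n, l_n, n)$, where $(X_n, l_n)$ is drawn by the growth rule. The central ingredient is a commutation identity: on the event $E_{i, \tildei}$ in $\That_n$,
\begin{align*}
  \hat{\rho}^*\bigl(\hat{\tau}^*(\hat{\varphi}(\That_{n-1}, X_n, l_n, n), i, \tildei), \tildei\bigr) = \hat{\varphi}\bigl(\hat{\rho}^*(\hat{\tau}^*(\That_{n-1}, i, \tildei), \tildei), X_n', l_n', n-1\bigr),
\end{align*}
where $(X_n', l_n')$ is the image of $(X_n, l_n)$ under the swap-delete-relabel. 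The identity is verified by direct inspection using Definitions~\ref{def:insertionplanar}, \ref{def:labelswappingplanar} and \ref{deletionplanaraug}; the insertions of leaf $n$ that require particular care are those onto $e_{i \downarrow}$ (which automatically destroy $E_{i, \tildei}$ because they force $a = n > \tildei$) and those that restructure the neighborhood of $v$. Combined with the induction hypothesis $\hat{\rho}^*(\hat{\tau}^*(\That_{n-1}, i, \tildei), \tildei) \deq \That_{n-2}$ conditional on the corresponding event, and the Markov property of the growth process, this commutation yields the distributional claim for $\That_n$. The independence claim follows in parallel: the induction hypothesis gives independence of $E_{i, \tildei}$ in $\That_{n-1}$ from $\That_{\tildei - 1}$, and summing the commutation identity against the growth weights for the insertion of leaf $n$ preserves this independence.

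The main obstacle is the verification of the commutation identity, especially the bookkeeping of the semi-planar permutations $\sigma_v^*$ at branch points close to $v$: the coordinates $(X_n', l_n')$ differ from $(X_n, l_n)$ in a way dictated by Definitions~\ref{def:insertionplanar} and \ref{deletionplanaraug}, and must be checked against the left-hand side, which performs the insertion of $n$ before removing $\tildei$. Once this combinatorial bookkeeping is in place, the induction closes cleanly.
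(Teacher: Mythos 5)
Your proposal is correct and rests on the same core facts as the paper's own proof: the characterization of $E_{i, \tildei}$ via the admissible insertion slots for leaf $\tildei$ and the forbidden slots (of total weight exactly $1$) for leaves $\tildei + 1, \ldots, n$, together with the observation that the surviving insertions still follow the $(\alpha,\gamma)$-growth rule on the reduced tree. The paper packages this as a single global characterization with a telescoping product and a direct conditional reconstruction from $\That_{\tildei-1}$, whereas you unroll the same bookkeeping into an induction on $n - \tildei$ with a one-step commutation identity; this is a reorganization of the identical argument rather than a genuinely different route.
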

    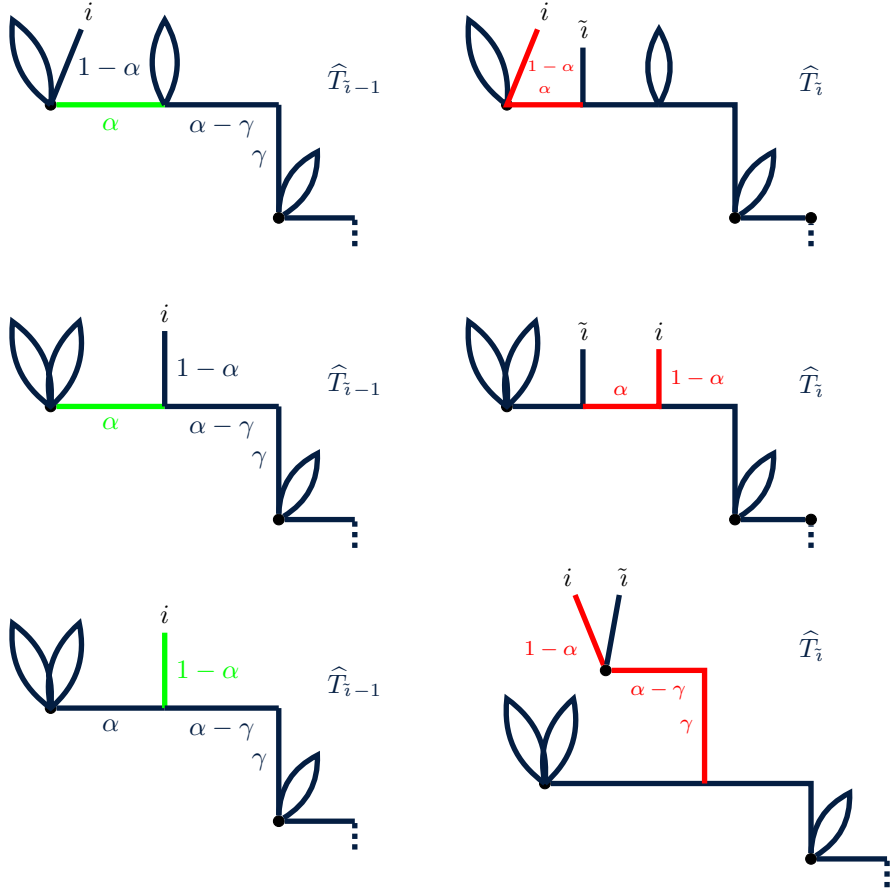
\begin{figure}[t]
      \centering
      \begin{tikzpicture}[%
        table/.style={circle, draw, minimum width=0.7cm},
        bp/.style={circle, draw, fill=black, inner sep=0pt, minimum width=4pt},
        comment/.style={circle, draw, color=red},
        label/.style={text=oxfordblue},
        line/.style={-, line width = 2pt, color = oxfordblue},
        scale=1
    ]

    \begin{scope}[xscale = -1]
      \node (0) at (0,0) {};
      \node (1bp_start) at (0,0.5) {};
      \node [bp] (1bp_end) at (1,0.5) {};
      \node (tree1) at (0.5,1.5) {};
      \node (bp_start) at (1,2) {};
      \node (bp_mid2) at (2.5,2) {};
      \node[anchor=south] (2) at (2.5,3) {};
      \node [bp] (bp_end) at (4,2) {};
      \node [anchor = south] (1) at (4.5,3) {};
      \node [anchor = south] (i) at (3.5,3) {$i$};

      \draw[line, dotted] (0) to (1bp_start.center);
      \draw[line] (1bp_start.center) to (1bp_end);

      \draw[line] (bp_start.center) to node[auto, outer sep = 1pt] {$\alpha-\gamma$} (bp_mid2.center);
      \draw[line, green] (bp_mid2.center) to node[auto, outer sep = 2pt] {$\alpha$} (bp_end);

      \draw[line, bend left] (bp_mid2.center) to (2.center) to (bp_mid2.center);
      \draw[line] (1bp_end) [bend left] to (tree1) [bend left] to (1bp_end);
      \draw[line, bend left] (bp_end.center) to (1.center) to (bp_end.center);
      \draw[line] (bp_end.center) to node[auto, swap, anchor = west] {$1 - \alpha$} (i);

      \draw[line] (1bp_end) to node[auto] {$\gamma$} (bp_start.center);

      \node[label, anchor = south] (T_itilde) at (0,2) {$\That_{\tildei-1}$};

      \begin{scope}[shift = {(-6,0)}]
        \node (0) at (0,0) {};
        \node [bp] (1bp_start) at (0,0.5) {};
        \node [bp] (1bp_end) at (1,0.5) {};
        \node (tree1) at (0.5,1.5) {};
        \node (bp_start) at (1,2) {};
        \node (bp_mid1) at (2,2) {};
        \node (2) at (2,3) {};
        \node (bp_mid2) at (3,2) {};
        \node (itilde) at (3,3) {$\tildei$};
        \node [bp] (bp_end) at (4,2) {};
        \node [anchor = south] (1) at (4.5,3) {};
        \node [anchor = south] (i) at (3.5,3) {$i$};

        \draw[line, dotted] (0) to (1bp_start);
        \draw[line] (1bp_start) to (1bp_end) to (bp_start.center) to (bp_mid1.center);
        \draw[line] (itilde) to (bp_mid2.center) to (bp_mid1.center);
        \draw[line, bend left] (bp_mid1.center) to (2.center) to (bp_mid1.center);
        \draw[line, bend left] (1bp_end) to (tree1) to (1bp_end);
        \draw[line] (bp_end.center) [bend left] to (1.center) [bend left] to (bp_end.center);
        \draw[line, red] (i) to node[auto, swap, anchor = west, inner sep = 1pt] {\tiny $1 - \alpha$} (bp_end.center) to node[auto] {\tiny $\alpha$} (bp_mid2.center);

        \node[label, anchor = south] (T_itilde) at (0,2) {$\That_{\tildei}$};
      \end{scope}

      \begin{scope}[shift = {(0,-4)}]
        \node (0) at (0,0) {};
        \node (1bp_start) at (0,0.5) {};
        \node [bp] (1bp_end) at (1,0.5) {};
        \node (tree1) at (0.5,1.5) {};
        \node (bp_start) at (1,2) {};
        \node (bp_mid2) at (2.5,2) {};
        \node[anchor=south] (i) at (2.5,3) {$i$};
        \node [bp] (bp_end) at (4,2) {};
        \node [anchor = south] (1) at (4.5,3) {};
        \node [anchor = south east] (2) at (3.5,3) {};

        \draw[line, dotted] (0) to (1bp_start.center);
        \draw[line] (1bp_start.center) to (1bp_end);

        \draw[line] (1bp_end) to node[auto] {$\gamma$} (bp_start.center);

        \draw[line, green] (bp_mid2.center) to node[auto] {$\alpha$} (bp_end);

        \draw[line] (bp_mid2.center) to node[auto, swap] {$1-\alpha$} (i);
        \draw[line] (1bp_end) [bend left] to (tree1) [bend left] to (1bp_end);
        \draw[line] (bp_end.center) [bend left] to (1.center) [bend left] to (bp_end.center) [bend left] to (2.center) [bend left] to (bp_end.center);

        \draw[line] (bp_start.center) to node[auto] {$\alpha - \gamma$} (bp_mid2.center);

        \node[label, anchor = south] (T_itilde) at (0,2) {$\That_{\tildei-1}$};
      \end{scope}

      \begin{scope}[shift = {(-6,-4)}]
        \node (0) at (0,0) {};
        \node [bp] (1bp_start) at (0,0.5) {};
        \node [bp] (1bp_end) at (1,0.5) {};
        \node (tree1) at (0.5,1.5) {};
        \node (bp_start) at (1,2) {};
        \node (bp_mid1) at (2,2) {};
        \node (i) at (2,3) {$i$};
        \node (bp_mid2) at (3,2) {};
        \node (itilde) at (3,3) {$\tildei$};
        \node [bp] (bp_end) at (4,2) {};
        \node [anchor = south] (1) at (4.5,3) {};
        \node [anchor = south east] (2) at (3.5,3) {};

        \draw[line, dotted] (0) to (1bp_start);
        \draw[line] (1bp_start) to (1bp_end) to (bp_start.center) to (bp_mid1.center);
        \draw[line] (bp_end) to (bp_mid2.center) to (itilde);
        \draw[line, red] (bp_mid2.center) to node[auto] {\footnotesize $\alpha$} (bp_mid1.center) to node[auto, swap] {\footnotesize $1-\alpha$} (i);
        \draw[line] (1bp_end) [bend left] to (tree1) [bend left] to (1bp_end);
        \draw[line] (bp_end.center) [bend left] to (1.center) [bend left] to (bp_end.center) [bend left] to (2.center) [bend left] to (bp_end.center);

        \node[label, anchor = south] (T_itilde) at (0,2) {$\That_{\tildei}$};
      \end{scope}

      \begin{scope}[shift = {(0,-8)}]
        \node (0) at (0,0) {};
        \node (1bp_start) at (0,0.5) {};
        \node [bp] (1bp_end) at (1,0.5) {};
        \node (tree1) at (0.5,1.5) {};
        \node (bp_start) at (1,2) {};
        \node (bp_mid2) at (2.5,2) {};
        \node[anchor=south] (i) at (2.5,3) {$i$};
        \node [bp] (bp_end) at (4,2) {};
        \node [anchor = south] (1) at (4.5,3) {};
        \node [anchor = south east] (2) at (3.5,3) {};

        \draw[line, dotted] (0) to (1bp_start.center);
        \draw[line] (1bp_start.center) to (1bp_end);

        \draw[line] (1bp_end) to node[auto] {$\gamma$} (bp_start.center);

        \draw[line] (bp_mid2.center) to node[auto] {$\alpha$} (bp_end);

        \draw[line, green] (bp_mid2.center) to node[auto, swap] {$1-\alpha$} (i);
        \draw[line] (1bp_end) [bend left] to (tree1) [bend left] to (1bp_end);
        \draw[line] (bp_end.center) [bend left] to (1.center) [bend left] to (bp_end.center) [bend left] to (2.center) [bend left] to (bp_end.center);

        \draw[line] (bp_start.center) to node[auto] {$\alpha - \gamma$} (bp_mid2.center);

        \node[label, anchor = south] (T_itilde) at (0,2) {$\That_{\tildei-1}$};
      \end{scope}

      \begin{scope}[shift = {(-6,-7.5)}]
        \node (0) at (-1,-1) {};
        \node (0bp_start) at (-1, -0.5) {};
        \node [bp] (0bp_end) at (0, -0.5) {};
        \node (tree1) at (-0.5,0.5) {};
        \node (1bp_start) at (0,0.5) {};
        \node (1bp_mid) at (1.4,0.5) {};
        \node [bp] (1bp_end) at (3.5,0.5) {};
        \node [anchor = south east] (tree2) at (3,1.5) {};
        \node [anchor = south] (tree3) at (4,1.5) {};

        \node (bp_start) at (1.4,2) {};
        \node [bp] (bp_end) at (2.7,2) {};
        \node [anchor = south] (1) at (3.2,3) {$i$};
        \node [anchor = south east] (2) at (2.3,3) {$\tildei$};

        \draw[line, dotted] (0) to (0bp_start.center);
        \draw[line] (0bp_start.center) to (0bp_end) to (1bp_start.center) to (1bp_mid.center) to (1bp_end);
        \draw[line] (0bp_end) [bend left] to (tree1) [bend left] to (0bp_end);
        \draw[line] (1bp_end.center) [bend left] to (tree3.center) [bend left] to (1bp_end.center);
        \draw[line] (1bp_end.center) [bend left] to (tree2.center) [bend left] to (1bp_end.center);
        \draw[line, red] (1bp_mid.center) to node[auto] {\footnotesize $\gamma$} (bp_start.center) to node[auto] {\footnotesize $\alpha - \gamma$} (bp_end) to node[auto] {\footnotesize $1-\alpha$} (1);
        \draw[line] (bp_end) to (2);

        \node[label, anchor = south] (T_itilde) at (0,2) {$\That_{\tildei}$};
      \end{scope} 
    \end{scope}
\end{tikzpicture}
      \caption[fig:ForbiddenEdges_1]{Illustration of three of the characterizations of the event $E_{i, \tildei}$ in the case where $i < \tildei$ and $i$ is located in a multifurcating branch point, from the proof of Lemma~\ref{lemma:condind}.
        The horizontal lines in each illustration correspond to the expansion of branch points described just after Definition~\ref{def:planaralphagamma}.
      Each row is an insertion scenario, where the left column is an illustration of $\That_{\tildei - 1}$ where the insertion location of $\tildei$ is colored green, while the right column is an illustration of $\That_{\tildei}$ where the edges colored in red are the ones where insertions of $\tildei + 1, \ldots, n$ are disallowed.}
      \label{fig:ForbiddenEdges_1}
    \end{figure}
\begin{proof}
    We will prove the first assertion by splitting into two cases.
    The overall proof technique will be to translate $E_{i, \tildei}$ to restrictions regarding the growth process and to use this characterization to obtain the result.

    \textbf{Case A:} $i < \tildei$. In this case, $E_{i, \tildei}$ is equivalent to one set of the following disjoint events, some of them illustrated in Figure~\ref{fig:ForbiddenEdges_1}, where we will denote the parent branch point of $i$ in $\That_{\tildei - 1}$ by $v$ and the number of children of $v$ in $\That_{\tildei - 1}$ by $c_v$.
        \begin{itemize}
          \item if $c_v = 2$ then either
            \begin{enumerate}
              \item $\tildei$ is inserted into $v$ (as right neighbour of $i$), and
              \item $\tildei + 1, \ldots, n$ are \textit{not} inserted on the leaf edge $i$ or as right neighbour $i$ in $v$ in $\That_{\tildei}, \ldots, \That_{n-1}$, respectively,
            \end{enumerate}
            or
            \begin{enumerate}
              \item $\tildei$ is inserted on the parent edge of $i$, $e_{v\downarrow}$, and
              \item $\tildei + 1, \ldots, n$ are $\textit{not}$ inserted on the leaf edge $i$, into the branch point $v$, nor on the edge $e_{v\downarrow}$ in $\That_{\tildei}, \ldots, \That_{n-1}$, respectively;
            \end{enumerate}
          \item if $c_v > 2$ and $i$ is leftmost in $v$, then
            \begin{enumerate}
              \item $\tildei$ is inserted as the right neighbour of $i$ in $v$, and
              \item $\tildei + 1, \ldots, n$ are \textit{not} inserted on leaf edge $i$ or between $i$ and the subtree containing $\tildei$ in the branch point $v$ in $\That_{\tildei}, \ldots, \That_{n-1}$, respectively.
            \end{enumerate}
            This situation is depicted in the top row of Figure~\ref{fig:ForbiddenEdges_1}.
          \item if $c_v > 2$ and $i$ is not leftmost in $v$, then
            \begin{enumerate}
              \item $\tildei$ is as the left neighbour of $i$ in $v$, and
              \item $\tildei + 1, \ldots, n$ are \textit{not} inserted on leaf edge $i$ or between $i$ and the subtree containing $\tildei$ in the branch point $v$ in $\That_{\tildei}, \ldots, \That_{n-1}$, respectively.
            \end{enumerate}
            This situation is depicted in the middle row of Figure~\ref{fig:ForbiddenEdges_1}.
        \end{itemize}
        In addition to the above, there is, regardless of which of the above scenarios we are in, the option to
            \begin{enumerate}
              \item insert $\tildei$ on the leaf edge $i$ in $\That_{\tildei - 1}$, and
              \item \textit{not} insert $\tildei + 1, \ldots, n$ on the first two edges nor in the first branch point on the ancestral line from leaf $i$ in $\That_{\tildei}, \ldots, \That_{n-1}$, respectively,
            \end{enumerate}
            which is illustrated in the bottom row of Figure~\ref{fig:ForbiddenEdges_1}. \smallskip

          \textbf{Case B:} $i = \tildei$.
          By the same reasoning as earlier, $E_{i, \tildei}$ is equivalent to
            \begin{itemize}
              \item \textit{not} inserting $\tildei + 1, \ldots, n$ on the first two edges, nor in the parent branch point of $i$, on the ancestral line from $i$ in $\That_{\tildei}, \ldots, \That_{n-1}$, if $i = \tildei$ was inserted into an edge of $\That_{\tildei - 1}$,
            \end{itemize}
            or
            \begin{itemize}
              \item \textit{not} inserting $\tildei + 1, \ldots, n$ on the leaf edge $i = \tildei$ or as the left neighbour of $i = \tildei$ in the first branch point on the ancestral line from $i$ in $\That_{\tildei}, \ldots \That_{n-1}$, respectively, if $i = \tildei$ is inserted into a branch point of $\That_{\tildei - 1}$.
            \end{itemize}
            We have now characterized the event $E_{i,\tildei}$ in terms of making or not making insertions, respectively, to specific edges with weights summing to $1$ in all cases.
            This implies that the probability of making or not making these insertions, respectively, is the same in all cases, and resolving the telescoping products, we find
    \begin{align*}
      &\P \left( \left\{ \That_{\tildei - 1} = \hat{\rho}^* ( \that, [n] \setminus [\tildei - 1] ) \right\} \cap E_{i, \tildei} \right) \\
      =\
      &\begin{cases}
        \frac{1}{n - 1 - \alpha} \P \left( \That_{\tildei - 1} = \hat{\rho}^* \left( \that, [n] \setminus [\tildei - 1] \right) \right) & \text{if $1 \leq i < \tildei \leq n$} \\[0.2cm]
        \frac{\tildei - 2 - \alpha}{n - 1 - \alpha} \P \left( \That_{\tildei - 1} = \hat{\rho}^* \left( \that, [n] \setminus [\tildei - 1] \right) \right) & \text{if $2 \leq i = \tildei \leq n$}
      \end{cases}
    \end{align*}
    for all $\that \in \Thatspace_{[n]}$, showing that $\That_{\tildei - 1} \independent E_{i, \tildei}$ for all fixed $1 \leq i \leq \tildei \leq n$. \\

    For the second assertion we start by noting that
    \begin{align*}
      \P \left( \That_{\tildei - 1} \in \cdot \ \middle \vert \ E_{i,\tildei} \right) = \P \left( \That_{\tildei - 1} \in \cdot \right)
    \end{align*}
    due to the independence proven above.
    Once again using the characterization of $E_{i, \tildei}$, we note that we can construct $\hat{\rho}^* \left( \hat{\tau}^* \left( \That_n, i, \tildei \right), \tildei \right)$ from $\That_{\tildei - 1}$ under the conditional law of $E_{i, \tildei}$:
    The leaf $\tildei$ is never attached and so the ``forbidden edges'', characterized above and coloured red in Figure~\ref{fig:ForbiddenEdges_1}, do not exist.
    But every leaf that was previously labelled $\tildei+1, \ldots, n$ will still be attached according to the $(\alpha, \gamma)$-growth rule since the total weight of the forbidden edges are exactly $1$ in all cases.
    This finishes the proof.
\end{proof}
\begin{thm}%
\label{thm:planarstationarity}
  Fix $\nin$.
  Let ${\left( \That_n(m) \right)}_{m \in \N_0}$ denote the semi-planar $(\alpha, \gamma)$-chain and let $\That_n$ denote the $n$th step of the semi-planar $(\alpha, \gamma)$-growth process.
  Then it holds that $\That_n(m) \dcon \That_n$ for $m \to \infty$, where $\dcon$ denotes convergence in distribution.
  Furthermore, it holds that the projection of $\That_n$ to the space of (non-planar) $[n]$-trees has the same distribution as the $n$th step of the $(\alpha, \gamma)$-growth process.
\end{thm}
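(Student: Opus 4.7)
The plan is to establish invariance of the distribution of $\That_n$ for the semi-planar $(\alpha,\gamma)$-chain, and then deduce convergence in distribution from the standard ergodic theorem for finite-state Markov chains once we check irreducibility and aperiodicity. The projection statement will be handled separately by induction on $n$.

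For invariance, I would suppose $\That_n(0) \sim \That_n$ and show $\That_n(1) \sim \That_n$. Decompose over $(I, \tilde{I}) = (i, \tilde{i})$: since $I \sim \Unif([n])$ is independent of $\That_n(0)$, and $\tilde{I}$ is a deterministic function of $(\That_n(0), I)$, the collection of events $\{I = i\} \cap E_{i, \tilde{i}}$ for $1 \leq i \leq \tilde{i} \leq n$ partitions the probability space. On each such event, the down-result is
\[
  \That_n^\downarrow(0) = \hat{\rho}^*\!\left( \hat{\tau}^*(\That_n(0), i, \tilde{i}), \tilde{i}\right),
\]
and by Lemma~\ref{lemma:condind} this has the same conditional distribution as $\That_{n-1}$. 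Summing over $(i, \tilde{i})$ via the total probability formula yields $\That_n^\downarrow(0) \deq \That_{n-1}$. The up-step then applies the $(\alpha,\gamma)$-growth rule from $n-1$ to $n$, and by the definition of the semi-planar growth process this produces a random tree distributed as $\That_n$. Hence the law of $\That_n$ is invariant.

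Next I would verify that the chain is irreducible and aperiodic on the finite set $\Thatspace_{[n]}$: irreducibility follows from the fact that any semi-planar tree can be reduced to a caterpillar (and from there to any target tree) by repeatedly selecting the largest leaf and reinserting it appropriately, using that the $(\alpha,\gamma)$-growth rule assigns positive probability to every insertable part-location under $0<\gamma\leq\alpha<1$ (the boundary cases need a small separate argument). Aperiodicity comes from positive self-loop probability: choosing $I=n$ and reinserting at the parent location of $n$ returns to the same state with positive probability. Together with uniqueness of the stationary distribution for a finite irreducible aperiodic chain, this yields $\That_n(m) \dcon \That_n$ as $m \to \infty$.

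For the projection statement, I would induct on $n$: the base case is trivial since $\Thatspace_1$ and $\T_{[1]}$ are both singletons. For the induction step, condition on $\hat{\pi}(\That_n) = \tree$. The semi-planar growth rule inserts leaf $n+1$ at $(x, l)$ with probability $w_{x,l}/(n-\alpha)$. Summing over $l$ and observing that for each branch point $x$ with $c_x$ children, $\sum_{l=1}^{c_x-1} w_{x,l} = (c_x-2)\alpha + (\alpha-\gamma) = (c_x-1)\alpha - \gamma$, whereas leaf and internal edges contribute $1-\alpha$ and $\gamma$ respectively, recovers exactly the weights of the $(\alpha,\gamma)$-growth rule from Definition~\ref{def:growthprocess_alphagamma}. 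Crucially this sum depends only on $\tree = \hat{\pi}(\That_n)$ and not on $\treesigma$, so the projected process is Markov, and by induction it coincides in law with the $(\alpha,\gamma)$-growth process.

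The main technical obstacle is clean bookkeeping of the partition by $(i,\tilde{i})$ in the invariance argument: one must verify that for each fixed $i$, the events $E_{i,\tilde{i}}$ indeed exhaust the probability space as $\tilde{i}$ varies, which requires the case analysis already present in the proof of Lemma~\ref{lemma:condind} (binary versus multifurcating parent of $i$, and location of $i$ within the branch point). Once this is absorbed, the invariance identity reduces to a telescoping application of Lemma~\ref{lemma:condind} and the definition of the growth rule.
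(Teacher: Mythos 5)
Your proposal is correct and follows essentially the same route as the paper: invariance is obtained by decomposing over $(I,\tilde I)=(i,\tilde i)$ and applying Lemma~\ref{lemma:condind} on each event $\{I=i\}\cap E_{i,\tilde i}$, after which convergence follows from finiteness of the state space and uniqueness of the recurrent class, and the projection claim follows from the weight specification of the semi-planar growth rule. You are somewhat more explicit than the paper about aperiodicity and about summing the weights $w_{x,l}$ over $l$ to recover the non-planar $(\alpha,\gamma)$-weights, but these are exactly the facts the paper's proof relies on.
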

\begin{proof}
    Fix $n \in \N$.
    That the semi-planar $(\alpha, \gamma)$-chain on $\Thatspace_{[n]}$ has a unique stationary distribution is easily seen, since the state space is finite and there is a unique, recurrent communicating class for
    \begin{itemize}
        \item $\alpha = 1$ and all $\gamma \in (0,1)$ (only insertions in branch points or internal edges),
        \item $\gamma = 0$ and all $\alpha \in (0,1)$ (only insertions in branch points or leaf edges),
        \item $\gamma = \alpha$ and all $\alpha \in [0,1]$ (binary trees),
        \item $\alpha = 1$ and $\gamma = 0$ (only insertions in branch points),
    \end{itemize}
    whilst the Markov chain is irreducible otherwise.
    Thus we are done if we can show that $\That_n(1) \deq \That_n(0) := \That_n$, which we will do by utilizing Lemma~\ref{lemma:condind}.
    Let $I$ be a uniform random variable on $[n]$, independent of everything else, let $f$ denote the transformation function from Definition~\ref{def:planaralphagammachain} and note that for all $\hat{\tree} \in \Thatspace_{n-1}$ 
    \begin{align*}
      &\P \left( \hat{\rho}^* \left( f \left(\That_n, I \right) \right) = \hat{\tree} \right) \\
      =\quad &\sum_{1 \leq i \leq \tildei \leq n} \P \left( \hat{\rho}^* \left( \hat{\tau}^* \left( \That_n, i, \tildei \right), \tildei \right) = \hat{\tree} \ \middle \vert \ E_{i, \tildei} \cap \{ I = i \} \right) \P \left( E_{i, \tildei} \cap \{ I = i \} \right) \\
      =\quad &\sum_{1 \leq i \leq \tildei \leq n} \P \left( \That_{n-1} = \hat{\tree} \right) \P \left( E_{i, \tildei} \cap \{ I = i \} \right) \\
      =\quad &\P \left( \That_{n-1} = \hat{\tree} \right)
    \end{align*}
    where we have used the independence of $I$, the definition of $f$ as well as Lemma~\ref{lemma:condind}.
    This shows that $\hat{\rho}^* \left( f \left( \That_n, I \right) \right) \deq \That_{n-1}$, and since $\That_n(1)$ is constructed from the former using the same growth rule used to construct $\That_n$ from the latter, this finishes our proof.
\end{proof}
%
%
\section{Projecting to a non-planar down-up chain}
In the previous section we constructed a semi-planar down-up chain on $\Thatspace_{[n]}$ with the distribution of $\That_n$ as its stationary distribution, where $\That_n$ denotes the $n$th step of the semi-planar $(\alpha, \gamma)$-growth process of Definition~\ref{def:planaralphagamma}.
Recalling that $\hat{\pi} \colon \Thatspace_{[n]} \to \T_{[n]}$ denotes the projection map from the space of semi-planar trees to that of non-planar trees, we have already noted that $\hat{\pi} ( \That_n ) \deq T_n$, where the latter denotes the $n$th step of the $(\alpha, \gamma)$-growth process of Definition~\ref{def:growthprocess_alphagamma}.
Now let us construct a non-planar down-up chain on $\T_{[n]}$ from the semi-planar version.

Fix $\nin$.
Let $\hat{K}_n$ denote the transition kernel of the semi-planar $(\alpha, \gamma)$-chain on $\Thatspace_{[n]}$ and define a $\hat{\pi}$-induced Markov kernel from $\T_{[n]}$ to $\Thatspace_{[n]}$ by
\begin{align}
  \hat{\Pi}_n \left( \tree, \left\{ \that \right\} \right)
  := \P \left( \That_n = \that \ \middle \vert \ \hat{\pi} \left( \That_n \right) = \tree \right)
  = \prod_{v \in \branchpoints(\tree)} p^{c_v-2}_{\alpha, \alpha - \gamma} \left( \sigma_v \right),
  \label{eq:MarkovKernel}
\end{align}
for each $\that \in \Thatspace_{[n]}$ and $\tree \in \T_{[n]}$, where $c_v$ denotes the number of children of a branch point $v \in \branchpoints (\tree)$ and $\sigma_v$ is the permutation of $[c_v - 2]$ associated with the same branch point in $\that$.
The latter expression stems from Lemma~\ref{lemma:internalstructure} and~\eqref{eq:condprop_planargivenshape}.
Let us abuse notation, so that $\hat{\Pi}_n$ also denotes the induced Markovian matrix $\hat{\Pi}_n := {\left( \hat{\Pi}_n \left( \tree, \left\{ \that \right\} \right) \right)}_{\tree \in \T_{[n]}, \that \in \Thatspace_{[n]}}$, whilst $\hat{\pi} := {\left( 1_{\hat{\pi}(\that) = \tree} \right)}_{\that \in \Thatspace_{[n]}, \tree \in \T}$ denotes the matrix linking a semi-planar tree to its non-planar counterpart.
We now give an alternative definition of the $(\alpha, \gamma)$-chain on $\T_{[n]}$ which we identify with Definition~\ref{def:alphagamma_chain} in Proposition~\ref{prop:nonplanar_alphagamma_chain}:
\begin{defi}[$(\alpha, \gamma)$-chain]
  \label{def:alpha_gamma_chain}
  Fix $\nin$.
  The $(\alpha, \gamma)$-\textit{chain on} $\T_{[n]}$ is a Markov chain on $\T_{[n]}$ with transition kernel
  \begin{align}\label{eq:nonplanartransitionmatrix}
    K_n := \hat{\Pi}_n \hat{K}_n \hat{\pi}.
  \end{align}
\end{defi}
The transition kernel $K_n$ is defined so that the diagram in Figure~\ref{fig:commutativediagram} commutes, and a more detailed explanation of the transitions of the $(\alpha, \gamma)$-chain, ${\left( T_n(m) \right)}_{m \in \N_0}$, is consequently the following:
\begin{enumerate}
  \item Fix $m \in \N_0$.
    If $T_n(m) = \tree \in \T_{[n]}$, pick a semi-planar tree, $\that \in \Thatspace_{[n]}$, according to $\hat{\Pi}_n \left( \tree, \cdot \right)$ defined in~\eqref{eq:MarkovKernel}.
  \item Make one transition of the semi-planar $(\alpha, \gamma)$-chain started from $\that$, to obtain $\that^\prime$.
  \item Project $\that'$ to the space of non-planar trees, i.e.\ set $T_n(m+1) = \hat{\pi} (\that^\prime)$.
\end{enumerate}
The following result ensures that the two ways of defining the $(\alpha, \gamma)$-chain (Definitions~\ref{def:alphagamma_chain} and~\ref{def:alphagamma_chain}) are compatible.
\begin{prop}%
\label{prop:nonplanar_alphagamma_chain}
Definition~\ref{def:alphagamma_chain} and Definition~\ref{def:alpha_gamma_chain} are equivalent.
\end{prop}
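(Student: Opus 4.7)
The plan is to unpack Definition~\ref{def:alpha_gamma_chain} and match the resulting transition probabilities against the explicit rule of Definition~\ref{def:alphagamma_chain}. A transition of $K_n = \hat{\Pi}_n \hat{K}_n \hat{\pi}$ from $\tree \in \T_{[n]}$ proceeds by sampling a semi-planar lift $\that \sim \hat{\Pi}_n(\tree, \cdot)$, performing one step of the semi-planar $(\alpha, \gamma)$-chain of Definition~\ref{def:planaralphagammachain}, and projecting via $\hat{\pi}$. The semi-planar up-step projects onto the non-planar $(\alpha,\gamma)$-growth rule by construction (the weights in~\eqref{weightsplanar} aggregate to those in Definition~\ref{def:growthprocess_alphagamma}), so the proposition reduces to showing the down-step projects correctly. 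Since swap-delete-relabel is equivariant under $\hat{\pi}$ and the uniform choice of $I \in [n]$ is identical in both definitions, the task further reduces to matching the conditional distribution of $\tilde{I}$ given $I = i$ and $T_n(m) = \tree$.

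I would split by $c_v$, the number of children of the parent $v$ of leaf $i$ in $\tree$. For $c_v = 2$ the semi-planar structure at $v$ is trivial (the permutation $\sigma_v$ indexes the empty set), and Definition~\ref{def:planaralphagammachain} reduces verbatim to the $c_v = 2$ clause of Definition~\ref{def:alphagamma_chain}. For $c_v > 2$, Lemma~\ref{lemma:internalstructure} yields that, conditional on $T_n(m) = \tree$, the permutation $\Sigma_v$ is distributed as $p^{c_v - 2}_{\alpha, \alpha - \gamma}$ independently of the other branch-point permutations (and of $I$). Because the semi-planar local search depends on $\that$ only through $\Sigma_v$, the conditional law of $\tilde{I}$ is obtained by integrating the deterministic rule $\sigma_v \mapsto \tilde{I}(\sigma_v, i)$ of Definition~\ref{def:planaralphagammachain} against $p^{c_v - 2}_{\alpha, \alpha - \gamma}$.

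When $i = i_j$ with $j \leq 2$, the semi-planar rule returns $\tilde{I}$ equal to the minimum label of the subtree occupying the leftmost of the shifted positions $3, \ldots, c_v$, so the required computation is $\P(\Sigma_v^{-1}(1) = r)$ for each $r \in [c_v - 2]$. This follows from a short telescoping of the one-step $(\alpha, \alpha-\gamma)$-$\ocrp$ insertion probabilities. When $j > 2$, the rule returns $\tilde{I} = i_j$ (when the semi-planar left neighbour of $i$ has rank smaller than $j$) or $\tilde{I} = i_k$ for the rank $k > j$ of that neighbour; the marginal probabilities are then obtained by summing over possible positions of rank $j$ in $\Sigma_v$, which may be handled either by the same telescoping argument or by invoking the regenerative decomposition of Proposition~\ref{prop:decrementmatrix}. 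The main obstacle is the bookkeeping in this $j > 2$ case, since one has to track rank $k$ conditional on the position of rank $j$ across all admissible placements, but once assembled the resulting expressions collapse to the weights prescribed by Definition~\ref{def:alphagamma_chain}, proving the claimed equivalence.
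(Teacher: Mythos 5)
Your proposal is correct and follows essentially the same route as the paper's proof: reduce to the down-step (the up-step commutes with $\hat{\pi}$ by construction of the weights), invoke Lemma~\ref{lemma:internalstructure} to identify the conditional law of $\Sigma_v$ given the tree shape as the table order of an $(\alpha,\alpha-\gamma)$-$\ocrp$ with $c_v-2$ tables, and translate the local search into leftmost/adjacency events for that $\ocrp$, split into the same cases $c_v=2$, $j\leq 2$, and $j>2$. The paper likewise stops at identifying these $\ocrp$ events rather than explicitly verifying the numerical weights of Definition~\ref{def:alphagamma_chain}, so your level of detail matches its proof.
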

\begin{proof}
  Before we consider the down-step itself, note that it does not make any difference if we perform the up-step for the semi-planar $(\alpha, \gamma)$-chain and then project to the non-planar trees, or if we project to the non-planar trees immediately after the down-step, and then perform the up-step by using the non-planar $(\alpha, \gamma)$-growth process, see the brief discussion following Definition~\ref{def:planaralphagamma}.
  This ensures, that we in the following only have to argue that the down-step outlined in Steps 1 and 2 is the correct characterization. 

  So fix $\tree \in \T_{[n]}$ and sample $\That$ from $\hat{\Pi}_n \left( \tree, \cdot \right) = \P \left( \That_n \in \cdot \ \middle \vert \ \hat{\pi} \left( \That_n \right) = \tree \right)$.

  If $i$ is in a binary branch point in $\tree$, the same is the case in $\That$.
  Hence the characterization of $\tilde{I}$ is trivial, as it is exactly the same here as in Definition~\ref{def:planaralphagammachain}.

  If $i$ is in a multifurcating branch point with $c_v > 2$ children in $\tree$, we need to split up into cases based on the location of $i$ within the branch point in $\That$.
  So let $v$ denote the parent branch point of $i$, let $i_1, \ldots, i_{c_v}$ denote the smallest leaf labels in the $c_v$ subtrees of $v$, enumerated in increasing order, and say that $i = i_j$ for some $j \in [c_v]$.
  This is all deterministic based on $\tree$ and $i$.
  Furthermore, recall the definition of $a$ and $b$ from Definition~\ref{def:planaralphagammachain}.

  If $i$ is placed leftmost in $v$ in $\That$, i.e.\ if $j \in [2]$, then the $b$ of Definition~\ref{def:planaralphagamma} is the smallest leaf to the right of the two leftmost subtrees, which will always be the largest of $i, a$ and $b$, implying that $\tildei = \max \{i, a, b\} = b$.
  Hence, conditional on $T_n(m) = \tree$ and $I = i$, the probability of $\tilde{I} = i_{j'}$ for some $2 < j' \leq c_v$ is the probability that $\That$ has the subtree containing $i_{j'}$ placed to the immediate right of $i$ in $v$.
  Considering how these subtrees were formed by the growth rule in the ordered Chinese Restaurant analogy outlined in Lemma~\ref{lemma:internalstructure}, this is the probability having table $j' - 2$, enumerated in the order of least element, placed leftmost in an $(\alpha, \alpha - \gamma)$-$\ocrp$ with $c_v - 2$ tables.

  If $i$ is not places leftmost in $v$ in $\That$, i.e.\ if $2 < j \leq c_v$, $b$ is the smallest leaf in the tree to the left of $i$ in $v$.
  In this case we always find that $\tildei = \max \{i, b\}$ as $a$ by definition is equal to $i_1$.
  Consequently, conditional on $T_n(m) = \tree$ and $I = i$, the probability of $\tilde{I} = i = i_j$ stems from the probability that $\That$ has one of the subtrees containing $i_1, \ldots, i_{j-1}$, respectively, placed to the immediate left of $i$ in $v$.
  Hence in the ordered Chinese Restaurant analogy, this corresponds to not opening a new table to the immediate left of table $j-2$, enumerated in the order of least element, in an $(\alpha, \alpha - \gamma)$-$\ocrp$ with $c_v - 2$ tables.
  Similarly the probability of getting $\tilde{I} = i_{j'}$ for any $j < j' \leq c_v$ corresponds to having table $j'-2$ to the immediate left of table $j-2$ in the same ordered Chinese Restaurant.
  This defines the conditional distribution of $\tilde{I}$ outlined above.
\end{proof}
\begin{figure}[t]
  \centering
  \begin{tikzcd}
    \Thatspace_{[n]} \arrow[r, "\hat{K}_n"] & \Thatspace_{[n]} \arrow[d, "\hat{\pi}"] \\
    \T_{[n]} \arrow[u, "\hat{\Pi}_n"] \arrow[r, red, "K_n" red] & \T_{[n]}
  \end{tikzcd}
  \caption{Commutative diagram to illustrate the construction of the transition kernel for the $(\alpha, \gamma)$-chain.}\label{fig:commutativediagram}
\end{figure}
The following result now follows immediately from Theorem~\ref{thm:planarstationarity}.
\begin{prop}\label{prop:stationaryMCnonplanar}
  Fix $\nin$ and let $T_n$ denote the $n$th step of the $(\alpha, \gamma)$-growth process.
  The $(\alpha, \gamma)$-chain on $\T_{[n]}$ started from $T_n$ is stationary.
\end{prop}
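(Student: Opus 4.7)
The plan is to exploit the commutative diagram in Figure~\ref{fig:commutativediagram} together with Theorem~\ref{thm:planarstationarity}. Denote by $\mu$ the law of $T_n$ on $\T_{[n]}$ and by $\hat{\mu}$ the law of $\That_n$ on $\Thatspace_{[n]}$, viewed as row vectors. By the definition~\eqref{eq:nonplanartransitionmatrix}, what has to be verified is the identity
\begin{align*}
  \mu \, \hat{\Pi}_n \, \hat{K}_n \, \hat{\pi} = \mu.
\end{align*}

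The first step is to show $\mu \, \hat{\Pi}_n = \hat{\mu}$. This is essentially a tower-property observation: by definition~\eqref{eq:MarkovKernel}, $\hat{\Pi}_n(\tree, \cdot)$ is the conditional distribution of $\That_n$ given $\hat{\pi}(\That_n) = \tree$, which by Lemma~\ref{lemma:internalstructure} and~\eqref{eq:condprop_planargivenshape} is a genuine Markov kernel. Since $\hat{\pi}(\That_n) \deq T_n \sim \mu$, summing out the conditioning variable recovers the joint law, and the marginal on $\Thatspace_{[n]}$ is exactly $\hat{\mu}$. The second step is to apply Theorem~\ref{thm:planarstationarity}: it asserts that $\hat{\mu}$ is invariant for the semi-planar $(\alpha, \gamma)$-chain, so $\hat{\mu} \, \hat{K}_n = \hat{\mu}$. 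The third step is to project back: $\hat{\mu} \, \hat{\pi}$ is the pushforward of the law of $\That_n$ under $\hat{\pi}$, and Theorem~\ref{thm:planarstationarity} also records that this pushforward coincides with the distribution of the $n$th step of the (non-planar) $(\alpha, \gamma)$-growth process, namely $\mu$. Chaining the three equalities yields $\mu K_n = \mu$.

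There is essentially no obstacle here once Definition~\ref{def:alpha_gamma_chain}, Lemma~\ref{lemma:internalstructure}, and Theorem~\ref{thm:planarstationarity} are in place; the statement is a direct consequence of the factorization of $K_n$ through the semi-planar level. The only point worth checking explicitly is that $\mu \, \hat{\Pi}_n = \hat{\mu}$, which is just the identity
\begin{align*}
  \sum_{\tree \in \T_{[n]}} \P(T_n = \tree)\, \P\bigl(\That_n = \that \bigm| \hat{\pi}(\That_n) = \tree\bigr) = \P(\That_n = \that), \qquad \that \in \Thatspace_{[n]},
\end{align*}
and similarly $\hat{\mu} \, \hat{\pi} = \mu$ follows from $\sum_{\that \in \hat{\pi}^{-1}(\tree)} \P(\That_n = \that) = \P(T_n = \tree)$. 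Assembling these three observations gives a short proof.
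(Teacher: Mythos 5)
Your proof is correct and follows essentially the same route as the paper: push the law of $T_n$ up to $\Thatspace_{[n]}$ via $\hat{\Pi}_n$ to recover the law of $\That_n$, invoke Theorem~\ref{thm:planarstationarity} for invariance under $\hat{K}_n$, and project back down via $\hat{\pi}$. The only difference is cosmetic — you phrase the three steps as matrix identities $\mu\,\hat{\Pi}_n = \hat{\mu}$, $\hat{\mu}\,\hat{K}_n = \hat{\mu}$, $\hat{\mu}\,\hat{\pi} = \mu$ rather than as distributional equalities of random variables.
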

\begin{proof}
  First note that the distribution of $T_n$ is pushed forward to the distribution of $\That_n$ by $\hat{\Pi}_n$, where $\hat{\Pi}_n$ is defined by~\eqref{eq:MarkovKernel} and $\That_n$ denotes the $n$th step of the semi-planar $(\alpha, \gamma)$-growth process.
  By Theorem~\ref{thm:planarstationarity} the semi-planar $(\alpha, \gamma)$-chain started from $\That_n$ is stationary, so if ${\left( \That_n(m) \right)}_{m \in \N_0}$ denotes the semi-planar $(\alpha, \gamma)$-chain started from $\That_n$, then $\That_n(1) \deq \That_n(0) \deq \That_n$.
  Lastly, by construction of the semi-planar $(\alpha, \gamma)$-growth process, we have that $\hat{\pi} ( \That_n ) \deq T_n$, and so $T_n(1) \deq T_n(0) \deq T_n$.
\end{proof}
Proposition~\ref{prop:stationaryMCnonplanar} is a fairly easy result to obtain due to the construction of the $(\alpha, \gamma)$-chain on $\T_{[n]}$ and the results regarding the semi-planar $(\alpha, \gamma)$-chain at our current disposal.
However, the link between the two down-up chains is somewhat deeper than what this result eludes to.
\begin{thm}\label{thm:nonplanar_alphagamma_chain}
  Fix $\nin$.
  Let ${\left( \That_n(m) \right)}_{m \in \N_0}$ be a semi-planar $(\alpha, \gamma)$-chain, and let $T_n(m) := \hat{\pi} \left( \That_n(m) \right)$ denote the projection of the chain onto $\T_{[n]}$.
  If $\That_n(0) \sim \hat{\Pi}_n(\tree, \cdot)$ for some $\tree \in \T_{[n]}$, then ${\left(T_n(m)\right)}_{m \in \N_0}$ is the $(\alpha, \gamma)$-chain started from $\tree$ with stationary distribution being that of $T_n$, the $n$th step of the $(\alpha, \gamma)$-growth process.
\end{thm}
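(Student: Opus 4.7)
The plan is to apply the Rogers--Pitman intertwining criterion \cite{MR624684} with link kernel $\hat{\Pi}_n$ from $\T_{[n]}$ to $\Thatspace_{[n]}$ and projection $\hat{\pi}$ in the opposite direction. The compatibility condition $\hat{\Pi}_n(\tree, \hat{\pi}^{-1}(\tree)) = 1$ is immediate from \eqref{eq:MarkovKernel}, so the only substantial step is to establish the intertwining identity
\[
\hat{\Pi}_n \hat{K}_n = K_n \hat{\Pi}_n.
\]
Using the defining relation $K_n = \hat{\Pi}_n \hat{K}_n \hat{\pi}$, this identity is equivalent to the following claim: if $\That(0) \sim \hat{\Pi}_n(\tree, \cdot)$ and $\That(1)$ is one step of the semi-planar $(\alpha, \gamma)$-chain later, then the conditional law of $\That(1)$ given $\hat{\pi}(\That(1))$ is again the product-of-oCRPs measure of Lemma~\ref{lemma:internalstructure}.

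To verify this, I would track, conditionally on $\hat{\pi}(\That(1))$, how each permutation $\sigma_v$ of $\That(1)$ is generated. Branch points untouched by the down-up transition retain the oCRP-distributed permutations inherited from $\That(0)$ and remain conditionally independent of everything else. The only possibly modified branch points are the parent $v$ (and, in the degenerate case that $v$ is binary and collapses, the grandparent $v'$) of the leaf selected for swapping and deletion, together with the branch point receiving the new leaf $n$ in the up-step. For the up-step, Lemma~\ref{lemma:internalstructure} applied to the semi-planar $(\alpha, \gamma)$-growth rule immediately gives the desired $(\alpha, \alpha-\gamma)$-oCRP conditional distribution at the branch point where insertion occurs. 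For the down-step, I would invoke the regenerative decomposition of the $(\alpha, \alpha-\gamma)$-oCRP from Proposition~\ref{prop:decrementmatrix}, together with the observation that the swap map of Definition~\ref{def:planaralphagammachain} is engineered so that after deletion the two leftmost children of the affected branch point are still precisely those bearing the two smallest leaf labels, ensuring the oCRP structure persists on the remaining subtrees.

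The principal obstacle is the case analysis in the down-step: the scenario in which $v$ is binary (so $v$ vanishes and the effect moves to $v'$) must be separated from the multifurcating cases, which further split according to whether the selected leaf $i$ is in one of the two leftmost subtrees of $v$ or strictly to their right. In each case one has to verify, using the explicit form of the swap-and-delete rule, that the surviving permutation is $(\alpha, \alpha-\gamma)$-oCRP distributed conditional on the new non-planar shape. Once the intertwining is in hand, Rogers--Pitman yields that $T_n(m) := \hat{\pi}(\That_n(m))$ is Markov with kernel $K_n$ started from $\tree$; Proposition~\ref{prop:nonplanar_alphagamma_chain} identifies this kernel with the one in Definition~\ref{def:alphagamma_chain}, and the stationarity statement is Proposition~\ref{prop:stationaryMCnonplanar}.
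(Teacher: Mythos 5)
Your strategy is sound and is, at bottom, the same engine the paper uses: Theorem~\ref{thm:nonplanar_alphagamma_chain} is obtained there as the case $k=n$ of Theorem~\ref{thm:decorated_chain_projection_theorem}, whose substantive ingredient is precisely an intertwining of $\hat{K}_n$ with a conditional-law link (Lemma~\ref{lemma:intertwining}); for $k=n$ the collapsed and decorated spaces reduce to $\T_{[n]}$, the Kemeny--Snell stage (Lemma~\ref{lemma:kemeny-snell}) becomes vacuous, and what survives is exactly your identity $\hat{\Pi}_n \hat{K}_n = K_n \hat{\Pi}_n$. Working directly at $k=n$ buys you something real: you can dispense with collapsed trees, internal structures and the auxiliary growth processes, and track only the permutations $\Sigma_v$. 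Your identification of the affected branch points (the parent $v$, the grandparent when $v$ is binary, and the insertion site of leaf $n$), of the required case split, and of the closing appeal to Proposition~\ref{prop:nonplanar_alphagamma_chain} and Proposition~\ref{prop:stationaryMCnonplanar} all match the paper's logic.

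Two cautions on the step you rightly call the principal obstacle. First, Proposition~\ref{prop:decrementmatrix} concerns the \emph{sizes} of the oCRP tables in order of appearance, whereas the object you must control is the \emph{table order} $\Sigma_v$, whose law is \eqref{eq:ocrp_tableorder} via Lemma~\ref{lemma:internalstructure}; the regenerative decomposition you cite is not the relevant tool. Second, what you need is strictly stronger than the swap-and-delete invariance of Lemma~\ref{lemma:condind}: you must show that conditionally on $I=i$, $\tilde{I}=\tildei$ \emph{and both the initial and final non-planar shapes}, the surviving permutation at $v$ is again $p^{c_v-3}_{\alpha,\alpha-\gamma}$-distributed. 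This conditional refinement does not follow formally from the unconditional statement, and in the paper it is exactly the content of Lemma~\ref{lemma:intertwining}, proved by re-running the forbidden-weights argument for growth processes started from arbitrarily weighted trees (Proposition~\ref{prop:generalresult_downstep_stationary_lemma_theorem}, Corollary~\ref{cor:korder_downupchain_stationary_lemma_theorem}). In your direct route it can alternatively be done by an explicit finite computation with \eqref{eq:ocrp_tableorder}, checking in each branch of your case analysis that the joint probability of a given $\Sigma_v$-configuration factors as the transition probability of Definition~\ref{def:alphagamma_chain}(ii) times the oCRP law of the reduced permutation. Either way, that verification must be carried out; as written your outline asserts it rather than proves it.
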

Theorem~\ref{thm:nonplanar_alphagamma_chain} is substantially deeper result than both Propositions~\ref{prop:nonplanar_alphagamma_chain} and~\ref{prop:stationaryMCnonplanar}, but will defer the proof as we have yet to develop the methodology to prove it, and as it is a special case of Proposition~\ref{prop:decorated_transition_kernel} and Theorem~\ref{thm:decorated_chain_projection_theorem}.
%
%
\section{A Markov Chain on decorated trees}%
\label{sec:MC_decorated}
So far we have introduced a down-up Markov chain on semi-planar trees, and shown that there is a corresponding Markov chain on the space of (non-planar) trees.
We now continue our endeavour to define down-up chains on various spaces of trees that arise as projections of $[n]$-trees.
To make this precise, we introduce the notion of decorated trees, as well as an intermediary known as collapsed trees, which arise from affiliating every element of $\insertablef$, the collection of edges and branch points of the tree $\tree \in \T_{[n]}$, with an integer or a set of integers, respectively.
\begin{defi}[Decorated Trees]\label{def:decoratedtrees}
  Let $A$ be a finite set, and fix some $\# A \leq n \in \N$.
  Let $\tree[s] \in \T_A$ be given, and let $\mathbf{y} = {\left( y_x \right)}_{x \in \insertablef[s]}$ be a collection of non-negative integers such that $\sum_x y_x = n$ where $y_x \geq 1$ for every external $x \in \edge (\tree[s])$.
  We call $\tree^\bullet = (\tree[s], \mathbf{y})$ a \textit{decorated $A$-tree of mass $n$}, and we denote the space of such trees by $\T_A^{\bullet n}$, and will refer to $\tree[s]$ as the \textit{tree shape} of $\tree$.
\end{defi}
For any finite sets $A \subseteq B$ with $\#B =: n$ we can easily obtain a decorated $A$-tree of mass $n$ from any $\tree \in \T_B$ by projection onto the subtree spanned by the leaves with labels in $A$.
\fxnote{Matthias: How does $\tree[s] \subseteq \tree$ notation match with e.g.~\cite{RefWorks:doc:5b6c561fe4b06c0731a5c558}? Relate to the literature in general.}
Let us explain this more formally.
In the remainder of this exposition we will use the notation, $\tree[s] \subseteq \tree \in \T_B$, to mean that there exists a set $A \subseteq B$ such that $\rho (\tree, B \setminus A) = \tree[s]$.
Consequently, we will insist that $\vertices(\tree[s]) \subseteq \vertices(\tree)$ in the usual sense, and hence it is meaningful to talk about a branch point of $\tree[s]$ being on the ancestral line from a leaf in $\tree$.
Note however that, unless we are in the degenerate situation where $A = B$ or where all leaves labelled by $B \setminus A$ have a parent branch point of $\tree[s]$ in $\tree$, we will have $\edge(\tree[s]) \nsubseteq \edge(\tree[t])$.

So fix $\tree \in \T_B$, and define $\tree[s] := \mathring{\rho} \left( \tree, B \setminus A \right)$, where we recall that the latter means that we delete all leaves with labels of $B$ not in $A$ from $\tree$, see Definition~\ref{def:leaf_deletion}.
Recalling that $u \prec_{\tree} v$ denotes that $u$ is the parent of $v$ in $\tree$, we now define the \textit{decoration function} $g \colon B \to \tree[s] \subseteq \tree$ by setting 
\begin{itemize}
  \item $g(i) = v \in \branchpoints (\tree[s])$ if $i$ is in a subtree of $v$ in $\tree$ which only contains leaves labelled by elements of $B \setminus A$, and
  \item $g(i) = e_{v_0v_m} \in \edge (\tree[s])$ if there exist $m \geq 2$, vertices $v_0, v_m \in \vertices (\tree[s])$ and $v_1, \ldots, v_{m-1} \in \branchpoints(\tree[t]) \setminus \branchpoints(\tree[s])$ such that for all $j \in [m]$ it holds that $v_{j-1} \prec_{\tree} v_j$, $v_0 \prec_{\tree[s]} v_m$, and that there is a $j \in [m-1]$ such that $i$ is in a subtree of $v_j$ in $\tree$ which only contains leaves labelled by elements of $B \setminus A$.
\end{itemize}
Then define
\begin{align}
  y_x = \sum_{i = 1}^n \delta_{x g(i)} = \# g^{-1}(\{x\}) = \# \left\{ i \in [n] \ \middle \vert \ g(i) = x \right\}
  \label{eq:decoratedweights}
\end{align}
for each $x \in \insertablef[s]$, where $\delta_{xy}$ is the Kronecker delta, and define $\mathbf{y} = {(y_x)}_{x \in \insertablef[s]}$.
Finally, define $\tree^\bullet = (\tree[s], \mathbf{y})$.
For each $k \leq n \in \N$, we will denote the map $\T_{[n]} \ni \tree \mapsto \tree^\bullet = (\tree[s], \mathbf{y}) \in \T_{[k]}^{\bullet n}$, described above, by $\pi_{[k]}^{\bullet n}$.
This construction yields another family of trees as well:
\begin{defi}[Collapsed Trees]
  \label{def:collapsed_trees}
  Let $A$ be a finite set, and fix some $\# A \leq n \in \N$.
  With $g$ being the decoration function, we denote the map that sends $\tree \in \T_{[n]}$ to its associated \textit{collapsed $A$-tree of mass $n$} by $\pi_{A}^{*n}$:
  \begin{align}
    \pi_A^{*n} ( \tree ) = \left( \tree[s], {g^{-1}(\{x\})}_{x \in \insertablef[s]} \right),
    \label{eq:collapsedtrees_associated}
  \end{align}
  where $\tree[s] = \rho \left( \tree, [n] \setminus A \right)$.
  We denote the space of such trees by
  \begin{align}
    \T_A^{*n} :=
  \left\{ \pi_{A}^{* n} \left( \tree \right) \bigg \vert \ \tree \in \T_{[n]} \right\}.
    \label{eq:collapsedtrees}
  \end{align}
\end{defi}
The aim of this section is similar to that of the previous one, where we studied a Markov chain on $\T_{[n]}$ by using a kernel to lift an element $\tree \in \T_{[n]}$ to $\Thatspace_{[n]}$.
Paraphrasing, we sampled the order of the subtrees within each branch point of $\tree$, then carried out both the down- and up-step, respectively, of the semi-planar $(\alpha, \gamma)$-chain on $\Thatspace_{[n]}$, and finally projected back to $\T_{[n]}$.

Constructing a Markov chain on decorated trees corresponding to the semi-planar $(\alpha, \gamma)$-chain on $\Thatspace_{[n]}$, is done by the following procedure.
Fix $\tree^\bullet = (\tree[s], \mathbf{y}) \in \T_{[k]}^{\bullet n}$.
Similar to the construction of ${\hat{\Pi}}_n$ in~\eqref{eq:MarkovKernel}, we will construct the kernel ${\hat{\Pi}}_k^{\bullet n}$, by setting
\begin{align}
  \forall \tree^\bullet \in \T_{[k]}^{\bullet n}, \ \that \in \Thatspace_{[n]} \colon \quad
  {\hat{\Pi}}_k^{\bullet n} (\tree^\bullet, \{\that\}) :=
  \P \left( \That_n = \that \ \middle \vert \ \hat{\pi}_{[k]}^{\bullet n} \left(\That_n\right) = \tree^\bullet \right),
  \label{eq:decoratedkernel}
\end{align}
where $\That_n$ is the $n$th step of the semi-planar $(\alpha, \gamma)$-growth process, and $\hat{\pi}_{[k]}^{\bullet n} := \pi_{[k]}^{\bullet n} \circ \hat{\pi}$ denotes the  projection from semi-planar $n$-trees to decorated $[k]$-trees of mass $n$, for brevity.
Define a transition kernel on $\T_{[k]}^{\bullet n}$ by 
\begin{align}
  K_k^{\bullet n} := {\hat{\Pi}}_k^{\bullet n} \hat{K}_n \hat{\pi}_{[k]}^{\bullet n},
  \label{eq:transitionkernel_decorated}
\end{align}
where $\hat{K}_n$ is the transition kernel for the semi-planar $(\alpha, \gamma)$-chain and we have used the same notation as in~\eqref{eq:nonplanartransitionmatrix} for the composition of kernels.
\begin{defi}[Decorated $(\alpha, \gamma)$-chain]\label{def:decorated_alphagamma_chain}
  Fix $n > k \in \N$ and let $K_k^{\bullet n}$ denote the transition kernel defined by~\eqref{eq:transitionkernel_decorated}.
The \textit{decorated $(\alpha, \gamma)$-chain on} $\T_{[k]}^{\bullet n}$ is a Markov chain on $\T_{[k]}^{\bullet n}$ with transition kernel $K_k^{\bullet n}$.
\end{defi}
\begin{rem}
  This is equivalent to the Definition~\ref{def:decorated_alphagamma_chain_intro}.
\end{rem}
The goal of the remainder of this exposition is two-fold.
On one hand we wish to show that if we project semi-planar $(\alpha, \gamma)$-chains of Definition~\ref{def:planaralphagammachain} to the space of decorated $[k]$-trees of mass $n$, we obtain the decorated $(\alpha, \gamma)$-chain of Definition~\ref{def:decorated_alphagamma_chain}.
On the other hand we wish to describe the transition kernel of the decorated $(\alpha, \gamma)$-chain without referencing the semi-planar transition kernel.
It turns out that the latter is much easier, and so we will take on this task first. 

So consider the behaviour of the semi-planar $(\alpha, \gamma)$-chain projected to decorated trees, i.e.\ for $k < n$ consider observing ${\left( \hat{\pi}_{[k]}^{\bullet n} \left( \That_n(m) \right) \right)}_{m \in \N_0}$, and recall that the semi-planar $(\alpha, \gamma)$-chain is characterized by composing two kernels in a down- and an up-step.
The focus in this section is to specify similar steps on the space of decorated trees, so that doing a down- or up-step, respectively, in the semi-planar $(\alpha, \gamma)$-chain and then projecting to a decorated tree, is the same as projecting to a decorated tree and then performing a down- or up-step, respectively.
It turns out that the down-step is significantly more complicated than the up-step, and so we start with doing the latter.
\begin{defi}[$(\alpha, \gamma)$-growth process for decorated trees]
\label{def:decorated_alphagamma_growth}
  Fix $k \in \N$, fix $\tree[s] \in \T_{[k]}$, and let $T_k^{\bullet k}$ be the unique element of $\T_{[k]}^{\bullet k}$ with tree shape $\tree[s]$.
  For any $k \leq n \in \N$ construct $T_k^{\bullet (n+1)}$ conditional on $T_k^{\bullet n} = (\tree[s], \mathbf{y}) \in \T_{[k]}^{\bullet n}$ by, for each $x \in \insertablef[s]$, setting
  \begin{align*}
    &\P \left( T_k^{\bullet (n+1)} = (\tree[s], \mathbf{y} + \mathbf{1}_x) \ \middle \vert \ T_k^{\bullet n} = (\tree[s], \mathbf{y}) \right) \\
    =\ 
    &\begin{cases}
      \frac{y_x - \alpha}{n - \alpha} & \text{if $x \in \edge(\tree[s])$ is external,} \\
      \frac{y_x + \gamma}{n - \alpha} & \text{if $x \in \edge(\tree[s])$ is internal,} \\
      \frac{y_x + (c - 1) \alpha - \gamma}{n - \alpha} & \text{if $x \in \branchpoints(\tree[s])$ with $c$ children,}
      \end{cases}
  \end{align*}
  for all decorated trees $(\tree[s], \mathbf{y}) \in \T_{[k]}^{\bullet n}$ where $\mathbf{1}_x = {\left( \delta_{xx'} \right)}_{x' \in \insertablef[s]}$.
We will call ${\left( T_k^{\bullet n} \right)}_{n \geq k}$ \textit{the decorated $(\alpha, \gamma)$-growth process started from} $T_k^{\bullet k} = \tree[s]$.
\end{defi}
We immediately note that the above is simply a P\'{o}lya urn scheme with initial weights labelled by the elements of $\insertablef[s]$.
Specifically, the initial weights will be $1-\alpha$ for an external edge, $\gamma$ for an internal edge, and $(c-1)\alpha - \gamma$ for a branch point with $c$ children in $\tree[s]$.
This ensures that if ${ \left( T_n \right) }_\nin$ is the $(\alpha, \gamma)$-growth process then
\begin{align*}
  \P \left( \pi_{[k]}^{\bullet n} (T_n) = \cdot \ \middle \vert \ T_k = \tree[s] \right)
  = \P \left( T_k^{\bullet n} = \cdot \right)
\end{align*}
for all $n \geq k$.
Additionally it holds that
\begin{align}
  \P \left( \hat{\pi}_{[k]}^{\bullet (n+1)} \left( \That_{n+1} \right) \in \cdot \ \middle \vert \ \That_n = \that \right)
  = \P \left( T_k^{\bullet (n+1)} \in \cdot \ \middle \vert \ T_k^{\bullet n} = \hat{\pi}_{[k]}^{\bullet n} (\that) \right)
  \label{eq:decoupling_upstep}
\end{align}
for all $\that \in \Thatspace_{[n]}$.
Consider again observing ${\left( \hat{\pi}_{[k]}^{\bullet n} \left( \That_n(m) \right) \right)}_{m \in \N_0}$ and let $\That_n^\downarrow(m)$ denote the semi-planar chain after the down-step.
Then \eqref{eq:decoupling_upstep} ensures that we end up with the same distribution on the space of decorated $[k]$-trees of mass $n$ by either
\begin{enumerate}
  \item projecting $\That_n^\downarrow(m)$ to the space of decorated $[k]$-trees of mass $n-1$, and then performing the up-step from the $(\alpha, \gamma)$-growth process for decorated trees, or
  \item performing the up-step from the semi-planar $(\alpha, \gamma)$-growth process, and then projecting the resulting tree to the space of decorated $[k]$-trees of mass $n$.
\end{enumerate}
What is more technical is how to construct a down-step on the space of decorated trees that is consistent with the down-step defined in the semi-planar $(\alpha, \gamma)$-chain.
For intuition, let us consider a semi-planar $(\alpha, \gamma)$-chain, ${\left( \That_n(m) \right)}_{m \in \N_0}$, and assume that, before carrying out the down-step, $\That_n(m) = \that$ and $\hat{\pi}_{[k]}^{\bullet n} ( \that )= (\tree[s], \mathbf{y}) \in \T_{[k]}^{\bullet n}$.
Now observe how there are three separate scenarios for the deletion of a leaf in the down-step:
\begin{itemize}
  \item We select a leaf with label $i > k$ in $\that$ that contributes to exactly one of the decorated masses $y_x$ for some $x \in \insertablef[s]$, in which case we reduce that mass by one, but the tree shape $\tree[s]$ and all other decorating masses stay unchanged.
  \item We initially select a leaf with label $i \in [k]$ in $\that$, search for $a$ and $b$ as defined in Definition~\ref{def:planaralphagammachain}, and define $\tildei = \max\{i, a, b\}$:
    \begin{itemize}
      \item If $\tildei \leq k$ we will swap $i$ and $\tildei$, delete $\tildei$, and relabel $(\tildei+1, \ldots, k+1)$ by $(\tildei, \ldots, k)$ using the increasing bijection.
        Depending on which $y_x$ the leaf labelled by $k+1$ contributed to in $(\tree[s], \mathbf{y})$, this can result in the tree shape changing.
        The decorating masses need to change accordingly.
      \item If $\tildei > k$ we will retain the same tree shape, irrespective of the location of $i$ in $\that$ prior to the swap.
        Indeed, if $i$ was located in a  multifurcating branch point of $\that$, our search rule guarantees that $\tildei$ is found in a subtree of the same branch point.
        If $i$ was located in a binary branch point of $\that$, $\tildei$ will either be the smallest label in the other subtree of that branch point, or be the smallest leaf in the second spinal bush on the ancestral line from $i$, but due to the deletion of the binary branch point after the swap, this will not change the tree shape of the projected tree.
        See Figure~\ref{fig:decoratedtrees_downstep} for an illustration. 
    \end{itemize}
\end{itemize}
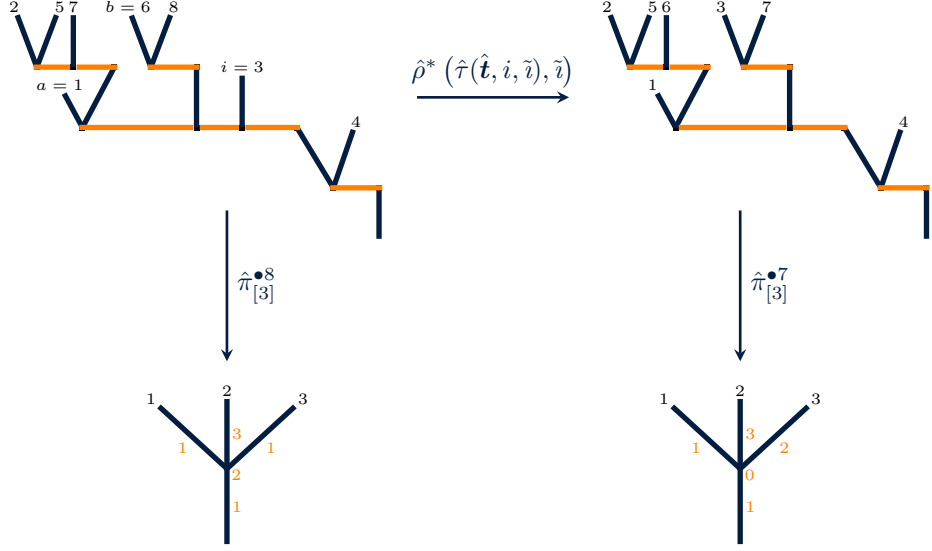
\begin{figure}[t]
  \centering
  \begin{tikzpicture}[
    bp/.style={rectangle, fill, inner sep=0pt, minimum width=2pt, minimum height=2pt},
    comment/.style={circle, draw, color=orange},
    leaf/.style={inner sep=1pt},
    weights/.style={auto, swap, orange, inner sep = 1pt},
    label/.style={text=oxfordblue},
    line/.style={-, line width = 2pt, color = oxfordblue},
    arrowline/.style={->, line width = 1pt, color = oxfordblue, >=stealth},
    scale=1
  ]
  \tikzstyle{every node}=[font=\tiny]

  %
  %

  \begin{scope}[xscale = -0.6, yscale = 0.8]

    \node (0) at (0,0) {};

    \node[bp] (bp_1) at (0,1) {};
    \node[bp] (bp_2) at (1,1) {};
    \node[leaf] (4) at (0.5,2.1) {$4$};

    \node[bp] (v_1) at (1.8,2) {};
    \node[bp] (v_2) at (3,2) {};
    \node[bp] (v_3) at (4,2) {};
    \node[bp] (v_4) at (6.5,2) {};

    \node[bp] (bp2_1) at (4,3) {};
    \node[bp] (bp2_2) at (5,3) {};
    \node[leaf] (8) at (4.5, 4) {$8$};
    \node[leaf] (6) at (5.5, 4) {$b = 6$};

    \node[leaf] (3) at (3,3) {$i = 3$};

    \node[bp] (bp3_1) at (5.8,3) {};
    \node[bp] (bp3_2) at (6.7,3) {};
    \node[bp] (bp3_3) at (7.5,3) {};
    \node[leaf] (7) at (6.7, 4) {$7$};
    \node[leaf] (5) at (7, 4) {$5$};
    \node[leaf] (2) at (8, 4) {$2$};

    \node[leaf] (1) at (7,2.7) {$a = 1$};

    \draw[line] (0) to (bp_1);
    \draw[line] (bp_2.center) to (4);

    \draw[line] (bp_2.center) to (v_1.center);
    \draw[line] (v_2.center) to (3);
    \draw[line] (bp2_2.center) to (8);
    \draw[line] (bp2_2.center) to (6);

    \draw[line] (v_3.center) to (bp2_1.center);

    \draw[line] (v_4.center) to (bp3_1.center);
    \draw[line] (bp3_2.center) to (7);
    \draw[line] (bp3_3.center) to (5);
    \draw[line] (bp3_3.center) to (2);

    \draw[line] (v_4.center) to (1);

    \draw[line, orange] (bp_1.east) to (bp_2.west);
    \draw[line, orange] (v_1.east) to (v_2) to (v_3) to (v_4.west);
    \draw[line, orange] (bp2_1.east) to (bp2_2.west);
    \draw[line, orange] (bp3_1.east) to (bp3_2) to (bp3_3.west);
  \end{scope}
  
  %
  %

  \begin{scope}[shift = {(7.2,0)}, xscale = -0.6, yscale = 0.8]

    \node (0) at (0,0) {};

    \node[bp] (bp_1) at (0,1) {};
    \node[bp] (bp_2) at (1,1) {};
    \node[leaf] (3) at (0.5,2.1) {$4$};

    \node[bp] (v_1) at (1.8,2) {};
    \node[bp] (v_2) at (3,2) {};
    \node[bp] (v_3) at (5.5,2) {};

    \node[bp] (bp2_1) at (3,3) {};
    \node[bp] (bp2_2) at (4,3) {};
    \node[leaf] (8) at (3.5, 4) {$7$};
    \node[leaf] (6) at (4.5, 4) {$3$};

    \node[bp] (bp3_1) at (4.8,3) {};
    \node[bp] (bp3_2) at (5.7,3) {};
    \node[bp] (bp3_3) at (6.5,3) {};
    \node[leaf] (7) at (5.7, 4) {$6$};
    \node[leaf] (5) at (6, 4) {$5$};
    \node[leaf] (2) at (7, 4) {$2$};

    \node[leaf] (1) at (6,2.7) {$1$};

    \draw[line] (0) to (bp_1);
    \draw[line] (bp_2.center) to (3);

    \draw[line] (bp_2.center) to (v_1.center);
    \draw[line] (bp2_2.center) to (8);
    \draw[line] (bp2_2.center) to (6);

    \draw[line] (v_2.center) to (bp2_1.center);

    \draw[line] (v_3.center) to (bp3_1.center);
    \draw[line] (bp3_2.center) to (7);
    \draw[line] (bp3_3.center) to (5);
    \draw[line] (bp3_3.center) to (2);

    \draw[line] (v_3.center) to (1);

    \draw[line, orange] (bp_1.east) to (bp_2.west);
    \draw[line, orange] (v_1.east) to (v_2) to (v_3);
    \draw[line, orange] (bp2_1.east) to (bp2_2.west);
    \draw[line, orange] (bp3_1.east) to (bp3_2) to (bp3_3.west);
  \end{scope}

  %
  %

  \begin{scope}[shift = {(-2,-4)}]
    \node (0) at (0,0) {};
    \node (v) at (0,1) {};
    \node[leaf] (1) at (-1,2) {$1$};
    \node[leaf, anchor = south] (2) at (0, 2) {$2$};
    \node[leaf] (3) at (1, 2) {$3$};

    \draw[line] (0.mid) to node[weights] {$1$} (v.mid) to node[weights, swap] {$1$} (1);
    \draw[line] (v.mid) to node[weights] {$3$} (2);
    \draw[line] (v.mid) to node[weights] {$1$} (3);
    \node[orange] at (v.east) {$2$};
  \end{scope}
  
  %
  %

  \begin{scope}[shift = {(4.75,-4)}]
    \node (0) at (0,0) {};
    \node (v) at (0,1) {};
    \node[leaf] (1) at (-1,2) {$1$};
    \node[leaf, anchor = south] (2) at (0, 2) {$2$};
    \node[leaf] (3) at (1, 2) {$3$};

    \draw[line] (0.mid) to node[weights] {$1$} (v.mid) to node[weights, swap] {$1$} (1);
    \draw[line] (v.mid) to node[weights] {$3$} (2);
    \draw[line] (v.mid) to node[weights] {$2$} (3);
    \node[orange] at (v.east) {$0$};
  \end{scope}

  \draw[arrowline] (0.5,2) to node[above] {\normalsize $\hat{\rho}^* \left( \hat{\tau}(\hat{\tree}, i, \tildei), \tildei \right)$} (2.5,2);
  \draw[arrowline] (-2,0.5) to node[auto] {\normalsize $\hat{\pi}_{[3]}^{\bullet 8}$} (-2, -1.5);
  \draw[arrowline] (4.75,0.5) to node[auto] {\normalsize $\hat{\pi}_{[3]}^{\bullet 7}$} (4.75, -1.5);
\end{tikzpicture}
  \caption{An illustration of the down-step in the semi-planar $(\alpha, \gamma)$-chain on $\Thatspace_{[8]}$.
    Starting from $\that \in \Thatspace_{[8]}$ (top left), leaf $i = 3$ is selected for deletion, whereby $a = 1$ and $b = 6$ so that $\tildei = \max \{i, a, b \} = 6$.
Hence the leaves labelled by $3$ and $6$, respectively, are swapped, and subsequently the leaf labelled by $6$ is deleted, and $(7, 8)$ is relabelled by $(6, 7)$, yielding the top right semi-planar tree.
In the bottom row the projection of the two trees onto $\T_{[3]}^{\bullet 8}$ and $\T_{[3]}^{\bullet 7}$, respectively, are depicted with the decorated masses marked in orange.
}
  \label{fig:decoratedtrees_downstep}
\end{figure}
From the above description it is clear, that the most complicated scenario of the down-step is when $\tildei \leq k$, whereas the other scenarios are easier to deal with.
For an example of one of down-steps, see Figure~\ref{fig:decoratedtrees_downstep}.

In the following definition, we will for fixed $\nin$ and $\tree \in \T_{[n]}$ with $\pi_{[k]}^n ( \tree ) = (\tree[s], \mathbf{y}) \in \T_{[k]}^{\bullet n}$ let $w_x$ denote the insertion weight associated with $x \in \insertablef[s]$ from Definition~\ref{def:growthprocess_alphagamma}.
To ease notation, we will for $(\tree[s], \mathbf{y}) \in \T_{[k]}^{\bullet n}$ set
\begin{align*}
  \tilde{y}_x =
  \begin{cases}
    y_x - 1 & \text{if $x \in \edge(\tree[s])$ is external,} \\
    y_x & \text{otherwise,}
  \end{cases}
\end{align*}
for all $x \in \insertablef[s]$.
\begin{defi}[Resampling a label]\label{def:resample}
Fix $\tree^\bullet = (\tree[s], \mathbf{y}) \in \T_{[k-1]}^{\bullet (n-1)}$.
  By \textit{resampling leaf $k$ in} $\tree^\bullet$ we mean obtaining an element $\tree_\text{re}^\bullet \in \T_{[k]}^{\bullet (n-1)}$ from $\tree^\bullet$ in the following way:
  \begin{enumerate}[label=\arabic*.]
    \item Select $x \in \insertable \left( \tree[s] \right)$ with probability $\frac{\tilde{y}_x}{n - k}$, and define $\tree[s]^\text{re} = \varphi(\tree[s], x, k)$.
    \item Define $\tree_\text{re}^\bullet = (\tree[s]^\text{re}, \mathbf{y}^\text{re})$, where $\mathbf{y}^\text{re} = {\left( y_{x^\prime}^\text{re} \right)}_{x^\prime \in \insertable ( {\tree[s]}^\text{re})}$ is constructed by setting $y_{x^\prime}^\text{re} = y_{x^\prime}$ for all $x^\prime \in \insertablef[s] \setminus \{x\}$ and splitting up $y_x$ in the following way:
      \begin{itemize}
        \item If $x \in \edge(\tree[s])$, let $v$ denote the branch point created in the previous step.
        Let $\left(Y_{k}, Y_v, Y_{v\downarrow}, Y_{v\uparrow} \right) \sim \dirmult^{\tilde{y}_x - 1}\left(1-\alpha, \alpha-\gamma, \gamma, w_x \right)$, set $y_{v\downarrow}^\text{re} = Y_{v\downarrow}, y_v^\text{re} = Y_v$, $y_k^\text{re} = Y_k + 1$, and implicitly define $y_{v\uparrow}$ by $\tilde{y}_{v\uparrow} = Y_{v\uparrow}$.    
      \item If $x \in \branchpoints(\tree[s])$, let $(Y_{k}, Y_x) \sim \dirmult^{y_x - 1} \left(1-\alpha, w_x \right)$, set $y_x^{\text{re}} = y_x - Y_x - 1$ and $y_k^{\text{re}} = Y_k + 1$.
      \end{itemize}
  \end{enumerate}
\end{defi}
We will use the above resampling mechanism to give an autonomous description of the transition kernel of the decorated $(\alpha, \gamma)$-chain from Definition~\ref{def:decorated_alphagamma_chain}.
The arguments used in the proof of the following proposition are similar to the ones used to argue the spinal decomposition for the $(\alpha, \gamma)$-growth model in~\cite{RefWorks:doc:5b4cbb5fe4b02dc0c79270af}.
In the following, recall that $q_{\alpha, \theta}$ denotes the decrement matrix of the $(\alpha, \theta)$-$\ocrp$ defined by \eqref{eq:decrementmatrix}.
\begin{prop}\label{prop:decorated_transition_kernel}
  Fix $1 \leq k < n \in \N$.
  The transition kernel of the decorated $(\alpha, \gamma)$-chain on $\T_{[k]}^{\bullet n}$, ${\left( T_k^{\bullet n}(m) \right)}_{m \in \N_0}$, can be characterized as follows:
  \begin{enumerate}[label=2.\alph*]
    \item[1.] Conditional on $T_{[k]}^{\bullet n}(m) = (\tree[s], \mathbf{y}) \in \T_{[k]}^{\bullet n}$, select $x \in \insertablef[s]$ with probability $\frac{y_x}{n}$.
    \item Unless $x \in \edge ( \tree[s] )$ is external with $y_x = 1$, reduce $y_x$ by $1$.
    \item If $x = i \in \edge (\tree[s])$ is external with $y_x = 1$, let $v \in \branchpoints(\tree[s])$ denote the parent of $x$ in $\tree[s]$, and let $c_v$ denote the number of children of $v$ in $\tree[s]$.
      \begin{itemize}
        \item If $c_v > 2$, let $N \sim \betabin^{y_v} (\alpha, (c_v-2)\alpha - \gamma)$. 
          \begin{itemize}
              \item Given $N = n_g > 0$, sample $Y_{\text{new}}$ from $q_{\alpha, \alpha}( n_g, \cdot )$, and replace $y_x$ with $Y_{\text{new}}$ and $y_v$ with $y_v - Y_{\text{new}}$.
              \item If $N = 0$, perform the down-step of the $(\alpha, \gamma)$-chain on $\T_{[k]}$ from $\tree[s]$ (step (ii) of Definition~\ref{def:alphagamma_chain}) to form  $\tree_\downarrow^\bullet \in \T_{[k-1]}^{\bullet (n-1)}$, and resample leaf $k$ in $\tree_\downarrow^\bullet$.
          \end{itemize}
        \item If $c_v=2$ do the following:
          \begin{itemize}
            \item if $y_v > 0$, let $Y_{\text{new}} \sim q_{\alpha, \alpha - \gamma} (y_v, \cdot)$, and replace $y_x$ with $Y_{\text{new}}$ and $y_v$ with $y_v - Y_{\text{new}}$.
            \item if $y_v = 0$ and $y_{e_{v\downarrow}} > 0$, define ${\tree[s]' = \varphi \left( \rho(\tree[s], i), e_{v\downarrow}, i \right)}$, and let {$N_b \sim q_{\gamma, \gamma} (y_{e_{v\downarrow}}, \cdot)$}.
              Conditional on the event $\{N_b = n_b\}$, sample {$N_t - 1 \sim \betabin^{n_b - 1}(1-\alpha, \alpha - \gamma)$}, replace $y_i$ with $N_t$, $y_v$ with $N_b - N_t$, and $y_{e_{v\downarrow}}$ with $y_{e_{v\downarrow}} - N_b$.
            \item if $y_v = y_{e_{v\downarrow}} = 0$, perform the down-step of the $(\alpha, \gamma)$-chain on $\T_{[k]}$ from $\tree[s]$ (step (ii) of Definition~\ref{def:alphagamma_chain}) to form  $\tree_\downarrow^\bullet \in \T_{[k-1]}^{\bullet (n-1)}$, and resample leaf $k$ in $\tree_\downarrow^\bullet$.
            \end{itemize}
        \end{itemize}
      \item[3.] Perform an up-step using the Markov kernel of the decorated $(\alpha, \gamma)$-growth process from Definition~\ref{def:decorated_alphagamma_growth}.
    \end{enumerate}
\end{prop}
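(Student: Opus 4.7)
The plan is to unfold the kernel $K_k^{\bullet n} = \hat{\Pi}_k^{\bullet n} \hat{K}_n \hat{\pi}_{[k]}^{\bullet n}$ one factor at a time and track the induced law on $\T_{[k]}^{\bullet n}$. The up-step is dealt with immediately by \eqref{eq:decoupling_upstep}: whether one first performs the semi-planar up-step of $\hat{K}_n$ and then projects, or first projects the down-step to a decorated $[k]$-tree of mass $n-1$ and then applies the decorated $(\alpha,\gamma)$-growth process of Definition~\ref{def:decorated_alphagamma_growth}, one obtains the same distribution. Thus the entire proof reduces to describing the composition of $\hat{\Pi}_k^{\bullet n}$, the down-step of $\hat{K}_n$, and $\hat{\pi}_{[k]}^{\bullet (n-1)}$ in decorated terms, and matching it to Steps~1 and~2 of the proposition.

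For the selection step I sample $\That \sim \hat{\Pi}_k^{\bullet n}((\tree[s],\mathbf{y}), \cdot)$ and $I \sim \Unif([n])$, and group the leaves of $\That$ according to the insertable part $x \in \insertablef[s]$ that they decorate; this marginalises to picking $x$ with probability $y_x/n$, which is Step~1 of the statement. When the drawn leaf is not $i$ itself (automatic unless $x$ is an external edge with $y_x = 1$), the local search of Definition~\ref{def:planaralphagammachain} returns $\tilde{I} > k$ within the same branch point or subtree, so the projected tree shape is preserved and projecting back simply decreases $y_x$ by one, giving the first bullet of Step~2.

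The heart of the proof is the case $x = i$ external with $y_x = 1$, so the selected leaf is forced to be $i$. Let $v$ be the parent of $i$ in $\tree[s]$. By Lemma~\ref{lemma:internalstructure} combined with a refined regenerative decomposition inside each slot of the ordered-restaurant analogy, the $y_v$ leaves hanging directly off $v$ in the semi-planar lift are arranged in subtrees distributed around the $c_v$ original subtrees according to an $(\alpha, \alpha-\gamma)$-$\ocrp$. In the local search the slot adjacent to $i$ carries weight $\alpha$ against the $v$-total $(c_v-1)\alpha - \gamma$, so the number $N$ of hanging leaves in that slot is $\betabin^{y_v}(\alpha, (c_v-2)\alpha - \gamma)$; conditional on $N = n_g > 0$, the size of the block of the slot nearest $i$ — which becomes the new $y_i$ after the swap — is $q_{\alpha,\alpha}(n_g, \cdot)$ by the first-block law of Proposition~\ref{prop:decrementmatrix}. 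The binary case $c_v = 2$ unfolds analogously but at two levels: first a split between $v$ (total weight $\alpha-\gamma$) and $e_{v\downarrow}$ (weight $\gamma$) producing the $q_{\alpha,\alpha-\gamma}$ or $q_{\gamma,\gamma}$ decrement law, then a $\betabin^{n_b - 1}(1-\alpha, \alpha-\gamma)$ split at the new branch point created on $e_{v\downarrow}$ to allocate mass between the new $y_i$ and the new internal edge.

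When $y_v = 0$ (and, in the binary case, also $y_{e_{v\downarrow}} = 0$) no hanging leaf is available, so the search necessarily returns $\tilde{I} \leq k$ and the swap-and-delete coincides with step~(ii) of Definition~\ref{def:alphagamma_chain} applied to $\tree[s]$, by Proposition~\ref{prop:nonplanar_alphagamma_chain}. This yields a tree in $\T_{[k-1]}^{\bullet(n-1)}$, and the label $k$ must be reinstated before the up-step. Since the $\hat{\Pi}_{k-1}^{\bullet(n-1)}$-distribution of the extra mass around the reduced shape is that of a generalised P\'olya urn in the sense of Proposition~\ref{prop:polyaurn}, marginalising over the semi-planar position of the former leaf $k$ via Proposition~\ref{prop:polyaurn}(iv) reduces exactly to the weights prescribed in the resampling mechanism of Definition~\ref{def:resample}. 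The main obstacle will be the careful bookkeeping of the two-level regenerative decomposition in the $c_v = 2$ case, where the interplay between the $v$-slot, the edge $e_{v\downarrow}$, and the new branch point produced by the swap is what forces the nested $q_{\gamma,\gamma}$ and $\betabin$ laws appearing in Step~2.b.
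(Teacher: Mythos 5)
Your proposal follows essentially the same route as the paper's proof: decouple the up-step via \eqref{eq:decoupling_upstep}, lift to the semi-planar tree with $\hat{\Pi}_k^{\bullet n}$, split into cases according to which decorated mass the uniformly selected leaf falls in, and identify the resulting laws via the P\'olya-urn/ordered-CRP structure of the branch points (giving the $\betabin$ and decrement-matrix laws) and the urn argument behind the resampling of leaf $k$. The only cosmetic difference is that you fold the ``$y_v>0$ but $N=0$'' sub-case into the $y_v=0$ discussion rather than treating it separately, but the same resampling argument applies there, so this is a presentational rather than a substantive gap.
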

\begin{proof}
  Before we consider the down-step itself, note that it does not make any difference if we perform the up-step for the semi-planar $(\alpha, \gamma)$-chain and then project to the decorated trees, or if we project to the decorated trees immediately after the down-step, and then perform the up-step by using the decorated $(\alpha, \gamma)$-growth process, see the brief discussion following Definition~\ref{def:decorated_alphagamma_growth}.
  This ensures, that we in the following only have to argue that the down-step outlined in Step 1 and 2 is the correct characterization. 

  Consider the decorated $(\alpha, \gamma)$-chain started at $\tree^\bullet = (\tree[s], \mathbf{y}) \in \T_{[k]}^{\bullet n}$.
  Now
  \begin{enumerate}
    \item sample $\That$ from $\P \left( \That_n \in \cdot \ \middle \vert \ \hat{\pi}_{[k]}^{\bullet n} \left( \That_n \right) = \tree^\bullet \right)$,
    \item in the down-step of the semi-planar $(\alpha, \gamma)$-chain from $\That$, condition on $I = i$ and $\tilde{I} = \tildei$, delete $\tildei$, and relabel $\tildei + 1, \ldots, n$ by $\tildei, \ldots, n-1$ using the increasing bijection, obtaining $\That^\downarrow$.
  \end{enumerate}
  Furthermore let $T_{[k]}^{\bullet n}{(m)}^\downarrow$ denote $T_{[k]}^{\bullet n} (m)$ after the down-step.
  Our aim is to prove that
  \begin{align}
    \P \left( T_{[k]}^{\bullet n}{(m)}^\downarrow \in \cdot \ \middle \vert \ T_{[k]}^{\bullet n}{(m)} = \tree^\bullet \right)
    = \P \left( \hat{\pi}_{[k]}^{\bullet (n-1)} \left( \That^\downarrow \right) \in \cdot \right) 
    \label{eq:decorated_downstep_distribution}
  \end{align}
  Fundamentally, we now split up into cases based on $\tree^\bullet$ and $i$ where we recall the notation $B_x = g^{-1}(\{x\})$ for $x \in \insertablef[s]$ where $g$ is the decoration function from $\hat{T}$ to $\tree[s]$.
  Since $\hat{T}$ is random, the sets ${\left( B_x \right)}_{x \in \insertablef[s]}$ form a random partition of $[n]$.
  However, we note that in all cases Step 1 is consistent with how we select leaf $I$ in the semi-planar $(\alpha, \gamma)$-chain, since $I \sim \Unif ([n])$ immediately implies
  \begin{align*}
    \P \left( I \in B_x = g^{-1}(\{x\}) \right) = \frac{y_x}{n}
  \end{align*}
  for all $x \in \insertablef[s]$.
  
  In the following we will let $a$, $b$ and $\tildei = \max \{i, a, b\}$ be defined as in Definition~\ref{def:planaralphagammachain}, and will let $(S^\downarrow, Y^\downarrow) := \hat{\pi}_{[k]}^{\bullet (n-1)} \left( \That_n^\downarrow \right)$ be the decorated $[k]$-tree of mass $n-1$ obtained by projecting the semi-planar $(\alpha, \gamma)$-chain after the down-step.
  
  \textbf{Case A1: $i \in B_x$ for external $x \in \edge (\tree[s])$, $y_x > 1$.}
  Note how in this case $i \in B_x$ implies that $\tildei = \max \{ i, a, b \} \in B_x$ as well, implying that $\tildei > k$.
  Hence the tree shape does not change during the down-step, and $(S^\downarrow, Y^\downarrow)$ will have the property that $S^\downarrow = \tree[s]$, $Y_x^\downarrow = y_x - 1$, and $Y_{x'}^\downarrow = y_{x'}$ for all $\insertablef[s] \ni x' \neq x$.
  This corresponds to Step 2.a where $x$ is an external edge.

  \textbf{Case A2: $i \in B_x$ for internal $x \in \edge (\tree[s])$.}
  Noting that $i > k$, the definition of $a$ and $b$ implies that $\tildei \in B_x$ as well.
  Hence this case is similar to Case A1, and corresponds to Step 2.a where $x$ is an internal edge.

  \textbf{Case A3: $i \in B_x$ for $x \in \branchpoints (\tree[s])$.}
  This is completely analogous to Case A2, and corresponds to Step 2.a where $x$ is a branch point. \\

  In all of the following cases, $v \in \branchpoints (\tree[s])$ refers to the parent branch point of $i$ in $\tree[s]$, $c_v$ will refer to the number of children of $v$ in $\tree[s]$, and we construct $\that[s] = \hat{\rho} \left( \That, [n] \setminus [k] \right)$ which implies that $\hat{\pi} (\that[s]) = \tree[s]$.

  \textbf{Case B1: $i \in B_x$ for external} $x \in \edge (\tree[s]), y_x = 1, y_v > 0$, $\tildei > k$.
  Firstly, we note that the restrictions on $i$ and $y_x$ implies that $i \leq k$.
  Now consider the semi-planar structure in the branch point $v$, and note how $y_x = 1$ and $y_v > 0$ implies that $\max \{i, a, b\} = \max \{i, b\}$.
Hence this case covers the situations where (1) $c_v > 2$ and a leaf label larger than $k$ is found to the left (or right, if $i$ is placed leftmost in $v$) of $i$ in $\That$, and (2) $c_v = 2$, in which case we are guaranteed to find a label larger than $k$ to the right of $i$ in $\That$.
In order to specify the decorated tree after the down-step, we only need to characterize the number of leaves found in the subtree of $v$ containing $\tildei$ in $\That$.

From the decoration, it is clear that $y_v$ leaves have been inserted to form subtrees of $v$, different from the ones already in $\that[s]$.
For $c_v > 2$, with $i$ not located leftmost in $v$, these subtrees will have grown in the following way.
Initially there will be weight $\alpha$ to the immediate left of $i$, and an accumulated weight of $(c_v-2)\alpha - \gamma$ in the rest of the branch point.
Thus, the numbers of insertions happening to the immediate left of $i$ versus everywhere else in $v$, respectively, will follow a P\'{o}lya urn scheme with these initial weights.
Now, say that $n_g > 0$ leaves have been inserted in the “gap” between $i$ and the left neighbour of $i$ in $\that[s]$.
These $n_g$ leaves will form subtrees of $v$, and the location and sizes of these subtrees will be governed by an $(\alpha, \alpha)$-$\ocrp$.
Specifically, $b$ will be located in the first subtree to the left of $i$ in $\That$, so denoting the size of this subtree by $Y_\text{new}$, we observe that $Y_\text{new}$ is distributed as the size of the rightmost table in an $(\alpha, \alpha)$-$\ocrp$ with $n_g$ customers.
Hence swapping $i$ and $\tildei$, deleting $\tildei$, relabelling $\{\tildei + 1, \ldots, n\}$ by $\{\tildei, \ldots, n-1\}$ using the increasing bijection, and projecting to decorated $[k]$-trees of mass $n-1$, implies that we obtain $S^\downarrow = \tree[s]$, $Y_i^\downarrow = Y_{\text{new}}$, $Y_v^\downarrow = y_v - Y_{\text{new}}$ and $Y_x^\downarrow = y_x$ for all $x \in \insertable \left({\tree[s]}_\downarrow \right) \setminus \{v\}$.

The argument above is exactly the same if $i$ is placed leftmost in $v$ in $\That$, as we then search for $b$ to the right of leaf $i$ instead of to the left, but still encounter an $(\alpha, \alpha)$-$\ocrp$. 

If $c = 2$, the argument is the same, except that we are guaranteed to find something to the right of $i$ in $v$ in $\That$.
Contrary to the above, the location and sizes of the subtrees  of $v$ in $\That$ with smallest leaf label larger than $k$ is governed by an $(\alpha,\alpha-\gamma)$-$\ocrp$, but the remainder of the argument carries over.

\textbf{Case B2: $i \in B_x$ for external} $x \in \edge (\tree[s]), y_x = 1, y_v > 0, \tildei \leq k$.
Following the same reasoning as above, this case only contains the event where $n_g = 0$ leaves with labels larger than $k$ have been inserted in the gap between $i$ and the left (or right if $i$ is placed leftmost in $v$) neighbour of $i$ in $v$, implying that we have $\tildei = \max \{i, b\} \leq k$.
  By swapping $i$ and $\tildei$ and subsequently deleting $\tildei$, we end up removing one of the leaves that defines the tree shape $\tree[s]$.
  Consider how $\That$ was grown from any semi-planar representative $\that[s]$ of $\tree[s]$ using the semi-planar $(\alpha, \gamma)$-growth process.
The sets ${\left( B_x \right)}_{x \in \insertablef[s]}$ form a random partition of $[n]$ induced by a P\'{o}lya urn scheme, and it follows from Proposition~\ref{prop:polyaurn} that the probability of seeing $k+1 \in B_x$ is exactly $\frac{\tilde{y}_x}{n-k}$ for any $x \in \insertablef[s]$.
In $\That$, swap $i$ and $\tildei$, delete $\tildei$ and relabel $\tildei + 1, \ldots, n$ by $\tildei, \ldots, n-1$ using the increasing bijection, and consider the tree shape and decoration masses of the projected tree (these operations below are well-defined as no edges or vertices of $\tree[s]$ apart from the leaf edge $\tildei$ are deleted).
Say that $k+1 \in B_{X_0}$ for a random $X_0 \in \insertablef[s]$, the tree shape after projecting to decorated trees will thus be $\varphi \left( \rho ( \tau ( \tree[s], i, \tildei ), \tildei), X_0, k \right)$, i.e.
  \begin{enumerate}
    \item if $X_0 = v \in \branchpoints (\tree[s])  $ the new tree shape will have $k$ as a child of $v$, and
    \item if $X_0 = e \in \edge (\tree[s])$, $k$ will split the edge $e$ up into a new branch point and three edges.
  \end{enumerate}
Further considerations of the correspondence between urn schemes and inserting the leaves $B_{X_0} \setminus \{k+1\}$ into $\varphi(\tree[s], X_0, k+1) = \hat{\pi} \left( \hat{\rho} (\That, [n] \setminus [k+1]) \right)$, or more precisely into the insertable parts of the projected tree specified above, shows that resampling $\tildei$ exactly corresponds to deleting $\tildei$, relabelling using the increasing bijection, and subsequently projecting to decorated $[k]$-trees of mass $n-1$.

  \textbf{Case B3: $i \in B_x$ for external} $x \in \edge (\tree[s])$, $y_x = 1$, $y_v = 0$, $\tildei > k$.
  This case is where we have a binary branch point, and there is mass on the edge $e_{v\downarrow}$ in $\tree[s]$, but is otherwise similar to Case B1, with the notable difference that the leaf insertions into $e_{v\downarrow} \in \edge (\tree[s])$ are governed by a $(\gamma, \gamma)$-$\ocrp$.
  In this case, $\tildei = b$ and $b$ will be the smallest leaf in the second spinal bush on the ancestral line from leaf $i$ in $\That$.
  Letting $N_b$ denote the random number of leaves in this spinal bush, by Proposition~\ref{prop:decrementmatrix} we have that $N_b \sim q_{\gamma,\gamma}(e_{v\downarrow}, \cdot)$.
  Further considerations of the insertions, as in Case B2, show that conditional on $N_b = n_b$, the number of leaves in each of the subtrees of of the spinal bush will be governed by a Chinese Restaurant process with parameters $(\alpha, \alpha-\gamma)$ with $n_b - 1$ customers.
  This is also an immediate consequence of the spinal decomposition of an $(\alpha, \gamma)$-tree (see Lemma 11 of~\cite{RefWorks:doc:5b4cbb5fe4b02dc0c79270af}). 
  
\textbf{Case B4: $i \in B_x$ for external} $x \in \edge (\tree[s])$, $y_x = 1$, $y_v = 0$, $\tildei \leq k$.
  This case is where we have a binary branch point, and there is no mass on $e_{v\downarrow}$ in $\tree[s]$, and so $\tildei < k$ and the argument presented in Case B2 can be used verbatim.
\end{proof}
With an autonomous description of the transition kernel for the decorated $(\alpha, \gamma)$-chain on $\T_{[k]}^{\bullet n}$ we turn to the more complicated task of proving that the projection of the semi-planar $(\alpha, \gamma)$-chain on $\Thatspace_{[n]}$ onto $\T_{[k]}^{\bullet n}$, ${\left( \hat{\pi}_{[k]}^{\bullet n} \left( \That_n(m) \right) \right)}_{m \in \N_0}$, is a Markov chain on $\T_{[k]}^{\bullet n}$ with the transition kernel characterized in Proposition~\ref{prop:decorated_transition_kernel}.

\section{Sampling semi-planar $(\alpha, \gamma)$-trees}
Similar to how we characterized $\hat{\Pi}$ in~\eqref{eq:MarkovKernel}, we now wish to characterize $\hat{\Pi}_k^{\bullet n}$.
In order to do this, let us define the concept of internal structures which will allow us to efficiently sample non-planar or semi-planar $[n]$-trees from a collapsed or decorated tree of mass $n$.

  So fix $\tree \in \T_{[n]}$ and let $\pi_{[k]}^{\bullet n} ( \tree ) = \tree^\bullet = \left( \tree[s], \mathbf{y} \right)$ for some $\tree[s] \in \T_{[k]}$, and let $g$ be the decorating function.
There is a partition of all edges and vertices of $\tree$ indexed by the elements of $\insertablef[s]$.
Define for each $x \in \insertablef[s]$ the set of all vertices of $\tree$ that are not vertices of $\tree[s]$, and from which $x$ is the “first” edge or vertex of $\tree[s]$, respectively, that is reached on the ancestral line.
More precisely, for each $v \in \branchpoints (\tree[s])$ define
  \begin{align*}
    \mathring{V}_{\tree[s], \tree}^v
    = \left\{ v' \in \vertices (\tree) \ \middle \vert \  
      \begin{matrix}
        v' = v \ \text{or}\ \exists j \in g^{-1}(\{v\})\ \exists v_0, \ldots, v_m \in \vertices (\tree) \colon \\
        v_0 = v, v_m = j, v' = v_i \ \text{for some}\ i \in [m], \\
        \text{and}\ v_{i-1} \prec_{\tree} v_i \ \text{for all}\ i \in [m]
      \end{matrix}
  \right\},
\end{align*}
  and
  \begin{align*}
    \mathring{E}_{\tree[s],\tree}^v
    = \left\{ e' \in \edge(\tree) \ \middle \vert \ \exists u, u' \in \mathring{V}_{\tree[s], \tree}^v \colon e' = e_{uu'} \right\},
  \end{align*}
  whilst for each $e \in \edge (\tree[s])$ with, say, $e = e_{uv}$, define
\fxnote{Is Figure 5 enough to illustrate these sets?}
  \begin{align*}
    \mathring{V}_{\tree[s], \tree}^e
    = \left\{ v' \in \vertices(\tree) \ \middle \vert \
      \begin{matrix}
        \exists j \in g^{-1}(\{e\})\ \exists v_0, \ldots, v_m \in \vertices (\tree) \colon \\
        v_0 = u, v_m = j, v' = v_i \ \text{for some}\ i \in [m], \\
        \text{and}\ v_{i-1} \prec_{\tree} v_i \ \text{for all}\ i \in [m]
      \end{matrix}
    \right\}.
  \end{align*}
  and
  \begin{align*}
    \mathring{E}_{\tree[s],\tree}^e
    = \left\{ e' \in \edge(\tree) \ \middle \vert \
      \begin{matrix}
        e' = e_{uv} \in \edge (\tree) \ \text{if}\ \mathring{V}_{\tree[s], \tree}^e = \emptyset, \text{or} \\
      \exists u', v' \in \mathring{V}_{\tree[s], \tree}^e \colon e' = e_{u'v'} \ \text{or}\ e' = e_{uu'} \ \text{or}\ e' = e_{v'v}
    \end{matrix}
  \right\}.
  \end{align*}
  We note that $\mathring{V}_{\tree[s], \tree}^x$ either is empty, contains only $\{x\}$ if $x \in \branchpoints (\tree[s])$,  or will contain at least one leaf for each $x \in \insertablef[s]$, and in the latter case we continue to label these leaves by their corresponding label in $\tree$.
  Additionally, the only situation where $\mathring{V}_{\tree[s], \tree}^x$ is empty, but $\mathring{E}_{\tree[s], \tree}^x$ is not, is when $x \in \edge(\tree[s])$ is internal and $g^{-1}(\{x\}) = \emptyset$, in which case $\mathring{E}_{\tree[s], \tree}^x = \{x\}$.
  Likewise, whenever $x \in \branchpoints (\tree)$ and $g^{-1}(\{ x \}) = \emptyset$, $\mathring{V}_{\tree[s], \tree}^x = \{x\}$ and $\mathring{E}_{\tree[s], \tree}^x = \emptyset$.

The above sets constitute a partition of $\tree$, i.e.\ a partition of both the vertex and edge sets of $\tree$, labelled by the elements of $\tree[s]$, but it is only almost true that $(\mathring{V}_{\tree[s], \tree}^x, \mathring{E}_{\tree[s], \tree}^x)$ constitutes a tree for each $x \in \insertablef[s]$.
  To ease stating later results, we amend this by adding some missing parts:
  \begin{defi}[Internal Structure]
    Fix $\tree \in \T_{[n]}$ and {$\pi_{[k]}^{\bullet n} (\tree) = (\tree[s], \mathbf{y})$}.
    For each $x \in \insertablef[s]$, make the following alterations to $\mathring{V}_{\tree[s], \tree}^x$ and $\mathring{E}_{\tree[s], \tree}^x$ defined above:
      \begin{itemize}
        \item If $x = e_{uv} \in \edge(\tree[s])$ is external such that $u \prec_{\tree[s]} v$ then 
          \begin{enumerate}
            \item add $\emptyset$ to $\mathring{V}_{\tree[s], \tree}^x$, and
            \item replace $e_{uu'}$ with $e_{\emptyset u'}$ in $\mathring{E}_{\tree[s], \tree}^x$, with $u' \in \mathring{V}_{\tree[s], \tree}^x$ uniquely characterised by $u \prec_{\tree} u'$.
          \end{enumerate}
        \item If $x = e_{uv} \in \edge(\tree[s])$ is internal such that $u \prec_{\tree[s]} v$ then
          \begin{enumerate}
            \item add $\emptyset$ and $1$ to $\mathring{V}_{\tree[s], \tree}^x$, and
            \item if $\mathring{V}_{\tree[s], \tree}^x = \emptyset$ replace the edge $e_{uv}$ with $e_{\emptyset 1}$, and otherwise replace the two edges $e_{uu'}$ and $e_{v'v}$ with $e_{\emptyset u'}$ and $e_{v' 1}$ in $\mathring{E}_{\tree[s], \tree}^x$, where $u', v' \in \mathring{V}_{\tree[s], \tree}^x$ uniquely characterized by $u \prec_{\tree} u'$ and $v' \prec_{\tree} v$.
          \end{enumerate}
        \item If $x = v \in \branchpoints (\tree[s])$ with $c$ children in $\tree[s]$ then
          \begin{enumerate}
            \item add $\emptyset$ and $1, \ldots, c$ to $\mathring{V}_{\tree[s], \tree}^x$, and
            \item add the edges $e_{\emptyset v}, e_{v1}, \ldots, e_{vc}$ to $\mathring{E}_{\tree[s], \tree}^x$.
          \end{enumerate}
      \end{itemize}
    In all three cases, replace each of the leaf labels in the resulting tree with their ranks to obtain $V_{\tree[s], \tree}^x$ and $E_{\tree[s], \tree}^x$, and define the \textit{internal structure of $x$ in} $\tree$ to be the tree $\intstruct(x) := \left( V_{\tree[s], \tree}^x, E_{\tree[s], \tree}^x \right)$.
      \label{def:internalstructure}  
\end{defi}
We note that if $x \in \edge (\tree[s])$ is external then $\intstruct (x) \in \T_{[y_x]}$, if $x \in \edge (\tree[s])$ is internal then $\intstruct (x) \in \T_{[y_x + 1]}$, whilst if $x \in \branchpoints(\tree[s])$ with $c$ children then $\intstruct (x) \in \T_{[c + {y_x}]}$.
Additionally, note how all internal structures above could have been defined equivalently from any semi-planar tree, $\that$, such that $\hat{\pi} (\that) = \tree$, and even turned into semi-planar trees, simply by letting each vertex in the internal structure inherit the permutation from $\that$ as all ranks of subtrees are preserved (note how this is well-defined by the addition of the leaves $\{1, \ldots, c\}$ to $V_{\tree[s], \tree}^v$ before relabelling).
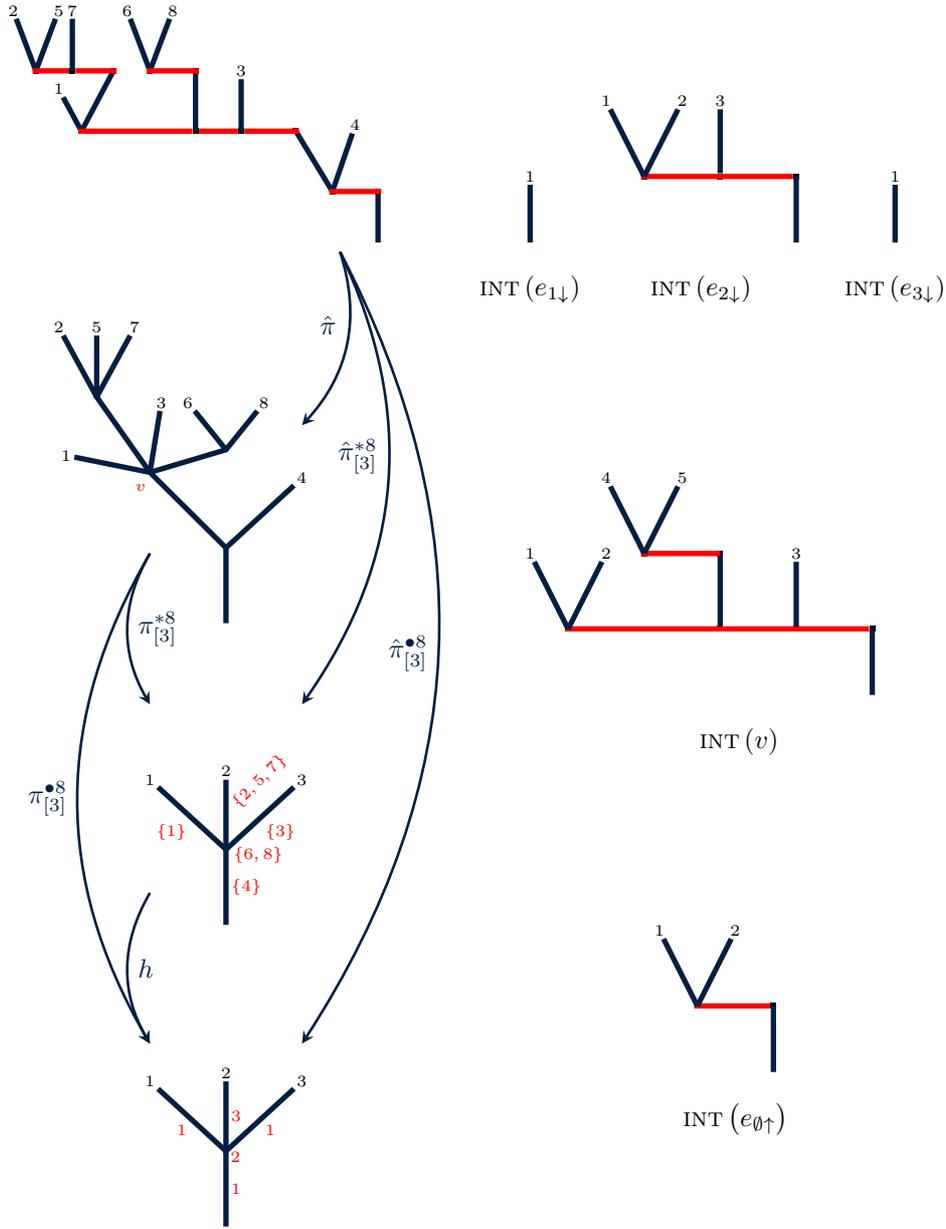
\begin{figure}[t!]
  \centering
  \begin{tikzpicture}[
    bp/.style={rectangle, fill, inner sep=0pt, minimum width=2pt, minimum height=2pt},
    comment/.style={circle, draw, color=red},
    leaf/.style={inner sep=1pt},
    weights/.style={auto, swap, red, inner sep = 1pt},
    label/.style={text=oxfordblue},
    line/.style={-, line width = 2pt, color = oxfordblue},
    arrowline/.style={->, line width = 1pt, color = oxfordblue, >=stealth},
    scale=1
  ]
  \tikzstyle{every node}=[font=\tiny]

  %
  %

  \begin{scope}[xscale = -0.6, yscale = 0.8]

    \node (0) at (0,0) {};

    \node[bp] (bp_1) at (0,1) {};
    \node[bp] (bp_2) at (1,1) {};
    \node[leaf] (4) at (0.5,2.1) {$4$};

    \node[bp] (v_1) at (1.8,2) {};
    \node[bp] (v_2) at (3,2) {};
    \node[bp] (v_3) at (4,2) {};
    \node[bp] (v_4) at (6.5,2) {};

    \node[bp] (bp2_1) at (4,3) {};
    \node[bp] (bp2_2) at (5,3) {};
    \node[leaf] (8) at (4.5, 4) {$8$};
    \node[leaf] (6) at (5.5, 4) {$6$};

    \node[leaf] (3) at (3,3) {$3$};

    \node[bp] (bp3_1) at (5.8,3) {};
    \node[bp] (bp3_2) at (6.7,3) {};
    \node[bp] (bp3_3) at (7.5,3) {};
    \node[leaf] (7) at (6.7, 4) {$7$};
    \node[leaf] (5) at (7, 4) {$5$};
    \node[leaf] (2) at (8, 4) {$2$};

    \node[leaf] (1) at (7,2.7) {$1$};

    \draw[line] (0) to (bp_1);
    \draw[line] (bp_2.center) to (4);

    \draw[line] (bp_2.center) to (v_1.center);
    \draw[line] (v_2.center) to (3);
    \draw[line] (bp2_2.center) to (8);
    \draw[line] (bp2_2.center) to (6);

    \draw[line] (v_3.center) to (bp2_1.center);

    \draw[line] (v_4.center) to (bp3_1.center);
    \draw[line] (bp3_2.center) to (7);
    \draw[line] (bp3_3.center) to (5);
    \draw[line] (bp3_3.center) to (2);

    \draw[line] (v_4.center) to (1);

    \draw[line, red] (bp_1.east) to (bp_2.west);
    \draw[line, red] (v_1.east) to (v_2) to (v_3) to (v_4.west);
    \draw[line, red] (bp2_1.east) to (bp2_2.west);
    \draw[line, red] (bp3_1.east) to (bp3_2) to (bp3_3.west);
  \end{scope}
  
  %
  %

  \begin{scope}[shift = {(-2,-5)}, xscale = -1]
    \node (0) at (0,0) {};
    \node (bp) at (0,1) {};
    \node[leaf] (4) at (-1,2) {$4$};
    \node (v) at (1,2) {};

    \node (bp2) at (0,2.3) {};
    \node[leaf] (8) at (-0.5, 3) {$8$};
    \node[leaf] (6) at (0.5, 3) {$6$};

    \node[leaf] (3) at (0.85,3) {$3$};

    \node (bp3) at (1.7,3) {};
    \node[leaf] (7) at (1.2, 4) {$7$};
    \node[leaf] (5) at (1.7, 4) {$5$};
    \node[leaf] (2) at (2.2, 4) {$2$};

    \node[leaf] (1) at (2.1,2.3) {$1$};

    \draw[line] (0.mid) to (bp.mid) to (4);
    \draw[line] (bp.mid) to (v.mid) to (bp2.mid) to (8);
    \draw[line] (bp2.mid) to (6);
    \draw[line] (v.mid) to (3);
    \draw[line] (v.mid) to (bp3.mid) to (7);
    \draw[line] (bp3.mid) to (5);
    \draw[line] (bp3.mid) to (2);
    \draw[line] (v.mid) to (1);

    \node[red] at (v.south west) {$v$};
  \end{scope}
  %
  %
  \begin{scope}[shift = {(-2,-9)}]
    \node (0) at (0,0) {};
    \node (v) at (0,1) {};
    \node[leaf] (1) at (-1,2) {$1$};
    \node[leaf, anchor = south] (2) at (0, 2) {$2$};
    \node[leaf] (3) at (1, 2) {$3$};

    \draw[line] (0.mid) to node[weights] {$\{4\}$} (v.mid) to node[weights, swap] {$\{1\}$} (1);
    \draw[line] (v.mid) to node[weights, rotate=45, shift={(0.1,0)}] {$\{2,5,7\}$} (2);
    \draw[line] (v.mid) to node[weights] {$\{3\}$} (3);
    \node[red, shift={(0.3,0)}] at (v.east) {$\{6,8\}$};
  \end{scope}
  \begin{scope}[shift = {(-2,-13)}]
    \node (0) at (0,0) {};
    \node (v) at (0,1) {};
    \node[leaf] (1) at (-1,2) {$1$};
    \node[leaf, anchor = south] (2) at (0, 2) {$2$};
    \node[leaf] (3) at (1, 2) {$3$};

    \draw[line] (0.mid) to node[weights] {$1$} (v.mid) to node[weights, swap] {$1$} (1);
    \draw[line] (v.mid) to node[weights] {$3$} (2);
    \draw[line] (v.mid) to node[weights] {$1$} (3);
    \node[red] at (v.east) {$2$};
  \end{scope}
  %
  %
  \begin{scope}[shift ={(2,0)}]

    \node (0) at (0,0) {};

    \node[leaf] (1) at (0,1) {$1$};

    \draw[line] (0) to (1.south);
    
    \node at (0,-0.5) {\normalsize $\intstruct \left( e_{1\downarrow} \right)$};
  \end{scope}
  \begin{scope}[shift={(5.5,0)}]
    \node (0) at (0,0) {};

    \node[bp] (bp_l3) at (0,1) {};
    \node[bp] (bp_l2) at (-1,1) {};
    \node[bp] (bp_l1) at (-2,1) {};
    
    \node[leaf] (1) at (-2.5,2) {$1$};
    \node[leaf] (2) at (-1.5,2) {$2$};
    \node[leaf] (3) at (-1,2) {$3$};
    
    \draw[line] (0) to (bp_l3.center);
    \draw[line, red] (bp_l3.west) to (bp_l2.center) to (bp_l1.west);
    \draw[line] (bp_l1.center) to (1);
    \draw[line] (bp_l1.center) to (2);
    \draw[line] (bp_l2.north) to (3);

    \node at (-1.25,-0.5) {\normalsize $\intstruct \left( e_{2\downarrow} \right)$};
  \end{scope}
  \begin{scope}[shift ={(6.8,0)}]

    \node (0) at (0,0) {};

    \node[leaf] (1) at (0,1) {$1$};

    \draw[line] (0) to (1);
    
    \node at (0,-0.5) {\normalsize $\intstruct \left( e_{3\downarrow} \right)$};
  \end{scope}
  \begin{scope}[shift={(6.5,-6)}]
    \node (0) at (0,0) {};

    \node[bp] (bp_l4) at (0,1) {};
    \node[bp] (bp_l3) at (-1,1) {};
    \node[bp] (bp_l2) at (-2,1) {};
    \node[bp] (bp_l1) at (-4,1) {};
    \node[leaf] (1) at (-4.5,2) {$1$};
    \node[leaf] (2) at (-3.5,2) {$2$};
    \node[leaf] (3) at (-1,2) {$3$};

    \node[bp] (bp2_l1) at (-3,2) {};
    \node[bp] (bp2_l2) at (-2,2) {};
    \node[leaf] (4) at (-3.5,3) {$4$};
    \node[leaf] (5) at (-2.5,3) {$5$};
    
    \draw[line] (0) to (bp_l4.center);
    \draw[line, red] (bp_l4.west) to (bp_l3.center) to (bp_l2.center) to (bp_l1.west);
    \draw[line] (bp_l1.center) to (1);
    \draw[line] (bp_l1.center) to (2);
    \draw[line] (bp_l2.north) to (bp2_l2.center);
    \draw[line] (bp_l3.north) to (3);
    \draw[line, red] (bp2_l2.center) to (bp2_l1.west);
    \draw[line] (bp2_l1.center) to (4);
    \draw[line] (bp2_l1.center) to (5);

    \node at (-1.75,-0.5) {\normalsize $\intstruct \left( v \right)$};
  \end{scope}
  \begin{scope}[shift={(5.2,-11)}]
    \node (0) at (0,0) {};

    \node[bp] (bp_l2) at (0,1) {};
    \node[bp] (bp_l1) at (-1,1) {};
    
    \node[leaf] (1) at (-1.5,2) {$1$};
    \node[leaf] (2) at (-0.5,2) {$2$};
    
    \draw[line] (0) to (bp_l2.center);
    \draw[line, red] (bp_l2.west) to (bp_l1.west);
    \draw[line] (bp_l1.center) to (1);
    \draw[line] (bp_l1.center) to (2);

    \node at (-0.5,-0.5) {\normalsize $\intstruct \left( e_{\emptyset \uparrow} \right)$};
  \end{scope}
  %
  %
  %

  \draw[arrowline] (-3,-4) to [bend right] node[auto] {\normalsize $\pi_{[3]}^{* 8}$} (-3, -6);
  \draw[arrowline] (-3,-8.5) to [bend right] node[auto] {\normalsize $h$} (-3, -10.5);
  \draw[arrowline] (-3,-4) to [bend right] node[auto, swap] {\normalsize $\pi_{[3]}^{\bullet 8}$} (-3, -10.5);

  \draw[arrowline] (-0.5,0) to [bend left] node[auto, swap] {\normalsize $\hat{\pi}$} (-1, -2.3);
  \draw[arrowline] (-0.5,0) to [bend left] node[auto, swap] {\normalsize $\hat{\pi}_{[3]}^{* 8}$} (-1, -6);
  \draw[arrowline] (-0.5,0) to [bend left] node[auto, swap] {\normalsize $\hat{\pi}_{[3]}^{\bullet 8}$} (-1, -10.5);
\end{tikzpicture}
  \caption{An illustration of the internal structures of an element of $\Thatspace_{[8]}$.}
  \label{fig:decoratedtrees_internalstructure}
\end{figure}
In most cases, it will be completely clear what $\tree$ and $\tree[s]$ are in the above setup, but whenever that is not the case, we will write $\intstruct_k^{\tree}(x)$ to refer to the internal structure of $x \in \insertablef[s]$ in $\tree \in \T_{[n]}$, where $\tree[s] = \rho \left( \tree, [n] \setminus [k] \right)$.

Before moving on to characterizing the (random) internal structure of a (random) decorated tree, we will need the following growth procedures, which are only slightly different from the $(\alpha, \gamma)$-growth process, and arise from the internal structure of internal edges and branch points, respectively.
\begin{defi}[Internal $(\alpha, \gamma)$-growth process]\label{def:internalalphagammagrowth}
  Fix $0 < \gamma \leq \alpha \leq 1$.
  Let $T_0^\gamma$ be the unique element of $\T_{[1]}$.
  For any $n \in \N_0$ construct $T_{n+1}^\gamma$ conditional on $T_n^\gamma = \tree \in \T_{[n + 1]}$, by
  \begin{align*}
    w_x
    = 
    \begin{cases}
      1 - \alpha & \text{if $x \in \edge \left( \tree[s] \right)$ is external and $x \neq e_{1\downarrow}$} \\
      \gamma & \text{if $x \in \edge \left( \tree[s] \right)$ is internal or $x = e_{1\downarrow}$} \\
      \left( c_x - 1 \right) \alpha - \gamma & \text{if $x \in \branchpoints (\tree[s])$ with $c_x$ children} \\
    \end{cases}
  \end{align*}
  and then defining the conditional distribution of $T_{n+1}^\gamma$ by
  \begin{align*}
    \P \left( T_{n+1}^\gamma = \varphi \left( \tree[s], x, n+1 \right) \ \big \vert \ T_n^\gamma = \tree[s] \right)
    = \frac{w_x}{n + \gamma},
  \end{align*}
  for each $x \in \insertablef[t]$.
  The sequence ${\left( T_n^\gamma \right)}_{n \in \N_0}$ is referred to as the \textit{internal $(\alpha, \gamma)$-growth process}. 
\end{defi}
\begin{defi}[$c$-order branch point $(\alpha, \gamma)$-growth process]\label{def:branchpointalphagammagrowth}
  Fix $0 \leq \gamma < \alpha \leq 1$ and $c \geq 2$.
  Let $T_0^{c-\text{bp}}$ be the unique element of $\T_{[c]}$ with only one branch point.
  For any $n \in \N_0$ construct $T_{n+1}^{c-\text{bp}}$ conditional on $T_n^{c-\text{bp}} = \tree \in \Thatspace_{[c + n]}$, by setting
  \begin{align*}
    w_x
    = 
    \begin{cases}
      1 - \alpha & \text{if $x \in \edge \left( \tree[s] \right) \setminus \{e_{1\downarrow}, \ldots, e_{c\downarrow}\}$ is external} \\
      \gamma & \text{if $x \in \edge \left( \tree[s] \right)$ is internal and $x \neq e_{\emptyset \uparrow}$} \\
      \left( c_x - 1 \right) \alpha - \gamma & \text{if $x \in \branchpoints (\tree[s])$ with $c_x$ children} \\
      0 & \text{if}\ x \in \{e_{\emptyset \uparrow}, e_{1\downarrow}, \ldots, e_{c\downarrow}\}
    \end{cases}
  \end{align*}
  and then defining the conditional distribution of $T_{n+1}^{c-\text{bp}}$ by
  \begin{align*}
    \P \left( T_{n+1}^{c-\text{bp}} = \varphi \left( \tree[s], x, n+1 \right) \ \big \vert \ T_n^{c-\text{bp}} = \tree[s] \right)
    = \frac{w_x}{n + (c-1)\alpha - \gamma},
  \end{align*}
  for each $x \in \insertablef[t]$.
  The sequence ${\left( T_n^{c-\text{bp}} \right)}_{n \in \N_0}$ is referred to as the \textit{$c$-order branch point $(\alpha, \gamma)$-growth process}. 
\end{defi}
As the growth processes defined above are very similar to the $(\alpha, \gamma)$-growth process described in Section~\ref{sec:semiplanar_alphagamma}, a wide range of results for the $(\alpha, \gamma)$-growth process can can be modified to fit the internal growth processes of Definitions~\ref{def:internalalphagammagrowth} and~\ref{def:branchpointalphagammagrowth}.
It should also be clear that we can define a semi-planar version of the above growth procedures, just as we did for the $(\alpha, \gamma)$-growth process.
For the $c$-order branch point process, this entails sampling an initial distribution to provide the order of the initial $c$ leaves in the first branch point, as described by~\eqref{eq:internalstructure}.
We will refer to these processes as the \textit{semi-planar internal $(\alpha, \gamma)$-growth process} and the \textit{semi-planar $c$-order branch point $(\alpha, \gamma)$-growth process}.

From the spinal decomposition of the $(\alpha, \gamma)$-growth process~\cite{RefWorks:doc:5b4cbb5fe4b02dc0c79270af}, it is clear that there is an equally intimate link between (un)ordered Chinese Restaurant Processes and the internal $(\alpha, \gamma)$-process.
The only difference is that the ordered Chinese Restaurant involved in the spinal decomposition outlined in Lemma 11 of~\cite{RefWorks:doc:5b4cbb5fe4b02dc0c79270af} is a $(\gamma, \gamma)$-$\ocrp$ rather than a $(\gamma, 1-\alpha)$-$\ocrp$.
This $(\gamma, \gamma)$-$\ocrp$ is the same as the one appearing in Proposition~\ref{prop:decorated_transition_kernel}.
From similar considerations we get the following characterization of the first split in the $c$-order branch point $(\alpha, \gamma)$-growth process:
\begin{lemma}%
\label{lemma:k_bp_alphagammaprocess}
Fix $0 < \gamma < \alpha \leq 1$ and $c \geq 2$.
Let ${\left( \That_{n}^{c-\text{bp}} \right)}_{n \in \N_0}$ denote the semi-planar $c$-order branch point $(\alpha, \gamma)$-process.
For each $\nin$ and $l \in [c-1]$ let $Y_l^n$ denote the number of leaves with labels larger than $c$ that have been inserted in the $l$'th gap (enumerated from left to right) in the branch point closest to the root in $\That_n^{c-\text{bp}}$.
In addition, let ${\That[S]}_l^n = \left( {\That[S]}_{l,1}^n, \ldots, {\That[S]}_{l,M_l}^n \right)$, enumerated from left to right, denote the subtrees in the $l$'th gap of that same branch point, let $X_l^n$ denote the associated ordered partition relabelled using the increasing bijection $\{c + 1, \ldots, c + n\} \mapsto \{1, \ldots, n\}$, and let ${\That[S]}_{l,j}^{n, \text{re}}$ denote ${\That[S]}_{l,j}^{n}$ relabelled by $\{1, \ldots, \# {\That[S]}_{l,j}^n \}$ using the increasing bijection for each $j \in [M_l], l \in [c - 1]$. 
  Then, $(Y_1^n,\ldots, Y_{c-1}^n) \sim \dirmult^n(\alpha, \ldots, \alpha, \alpha - \gamma)$, and conditionally given $\left( Y_1^n, \ldots, Y_{c-1}^n \right) = \left( y_1, \ldots, y_{c-1} \right)$, ${\left( X_l^n \right)}_{l \in [c-1]}$ are independent and
  \begin{align*}
    \forall 1 \leq l < c-1 \colon \ X_l^n \sim \ocrp^{y_l} \left( \alpha, \alpha \right)
    \quad \text{and} \quad
    X_{c-1}^n \sim \ocrp^{y_{c-1}} \left( \alpha, \alpha - \gamma \right).
  \end{align*}
  Further conditioning on $\left( \# {\That[S]}_{l,1}^n, \ldots, \# {\That[S]}_{l,M_l}^n\right) = \left( n_{l,1}, \ldots, n_{l,M_l} \right)$ yields that \\
  ${\That[S]}_{l,1}^{n, \text{re}}, \ldots, {\That[S]}_{l,M_l}^{n, \text{re}}$ are independent with ${\That[S]}_{l,j}^{n, \text{re}} \deq \That_{n_{l,j}}$ for each $j \in [M_l], l \in [c-1]$.
\end{lemma}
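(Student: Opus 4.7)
The plan is to decompose each of the $n$ insertions according to which of the $c-1$ gaps of the initial branch point $v$ it lands in, and to show that this decomposition is governed by a P\'olya urn. Concretely, I would associate with each gap $l$ a running weight $W_l^n$ equal to the total insertion weight of all positions (including points inside subtrees already planted in that gap) lying between the two original subtrees flanking the gap. The key identity to establish inductively is $W_l^n = \alpha + Y_l^n$ for $l < c-1$ and $W_{c-1}^n = (\alpha - \gamma) + Y_{c-1}^n$; this reduces to checking that each insertion inside gap $l$ increments $W_l^n$ by exactly $1$, whether the insertion creates a new single-leaf subtree (for $l<c-1$, replacing one position of weight $\alpha$ by two of weight $\alpha$ and one external edge of weight $1-\alpha$; for $l=c-1$, replacing the rightmost position of weight $\alpha-\gamma$ by positions of weights $\alpha$ and $\alpha-\gamma$ and a leaf edge of weight $1-\alpha$) or adds a leaf inside an existing subtree. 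The latter case uses the elementary identity that the total insertion weight inside a subtree of size $s$ rooted at $v$ equals $s - \alpha$, which is maintained under leaf additions governed by the $(\alpha, \gamma)$-growth rule. Granted this, the Dirichlet-multinomial claim for $(Y_1^n, \ldots, Y_{c-1}^n)$ is immediate from Proposition~\ref{prop:polyaurn}.

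Conditional on $(Y_1^n, \ldots, Y_{c-1}^n) = (y_1, \ldots, y_{c-1})$, the within-gap processes are independent across $l$, since at each time step only the chosen gap's internal configuration is modified. Focusing on a single gap $l < c-1$, an insertion either creates a new subtree at one of the positions flanking or separating existing subtrees (each such position contributing weight $\alpha$) or lands inside an existing subtree of size $m$ (contributing total weight $m - \alpha$). This is precisely the transition rule of the $(\alpha, \alpha)$-$\ocrp$ with $y_l$ customers, where tables in order of appearance correspond to the subtrees $\widehat{S}_{l,j}^n$. For $l = c-1$ the rightmost position instead carries weight $\alpha - \gamma$, producing an $(\alpha, \alpha - \gamma)$-$\ocrp$; the argument is essentially a localised version of the proof of Lemma~\ref{lemma:internalstructure}.

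For the statement about the subtrees, once $\widehat{S}_{l,j}^n$ has been initialised by its first inserted leaf, subsequent insertions hitting it choose an insertion point according to exactly the weights of the semi-planar $(\alpha, \gamma)$-growth rule: the edge from the subtree's top branch point down to $v$ plays the role of $e_{\emptyset \uparrow}$ and becomes internal once the subtree grows past one leaf, correctly carrying weight $\gamma$. Relabelling by the increasing bijection preserves the relative order of insertions inside the subtree, so conditional on $\# \widehat{S}_{l,j}^n = n_{l,j}$ the relabelled subtree $\widehat{S}_{l,j}^{n, \text{re}}$ has the law of $\widehat{T}_{n_{l,j}}$; independence across $(l, j)$ is inherited from the P\'olya-urn and OCRP decompositions.

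The main difficulty is the weight bookkeeping underlying the P\'olya urn step: one must verify that the total weight associated with gap $l$ grows by exactly $1$ per insertion into that gap irrespective of whether a new subtree is planted or an existing one is extended, and carefully handle the boundary case $l = c-1$ with the different rightmost-gap weight. Once this identity is in hand, the remaining assertions are direct translations between the weights specified in Definition~\ref{def:branchpointalphagammagrowth} and the urn-scheme characterisations recalled in Section~\ref{sec:prelims}.
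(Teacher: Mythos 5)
Your proposal is correct and follows essentially the same route as the paper's proof: a P\'olya urn with initial weights $(\alpha,\ldots,\alpha,\alpha-\gamma)$ on the $c-1$ gaps for the Dirichlet--multinomial claim, an ordered CRP argument (as in Lemma~\ref{lemma:internalstructure}) within each gap, and the observation that each subtree grows as a semi-planar $(\alpha,\gamma)$-growth process. The only difference is that you make the weight bookkeeping (each insertion into a gap increasing its total weight by exactly $1$) explicit, which the paper subsumes under ``standard growth process arguments.''
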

\begin{proof}
  All of the above assertions follows from (by now) standard growth process arguments, as one notes that the only possible insertions in these processes are either into the branch point described above, or into a subtree of that branch point with a least label larger than $k$.
  In the semi-planar tree, consider the insertion of these leaves into the “gaps” of the branch point between the subtrees with least leaf label smaller than $k$.
  There are $c-1$ such gaps, of which $c-2$ has weight $\alpha$ and the rightmost one has weight $\alpha - \gamma$.
  Hence the numbers of leaves being inserted into the gaps follow a P\'{o}lya urn scheme with initial weights $\mathbf{w} = (\alpha, \ldots, \alpha, \alpha - \gamma) \in {[0,1]}^{c-1}$, and Proposition~\ref{prop:polyaurn} directly yields that $\left( Y_1^n, \ldots, Y_{c-1}^n \right) \sim \dirmult^n(\mathbf{w})$.
  When conditioning on which gap a new leaf is inserted into, the location within that gap is clearly governed by an $\ocrp$, the distribution of which is detailed by the same argument as in Lemma~\ref{lemma:internalstructure}.
  And lastly, when conditioning on which leaves end up in the same subtree of $v$, each of these subtrees will after relabelling be a semi-planar $(\alpha, \gamma)$-tree with the corresponding number of leaves. 
\end{proof}
\begin{cor}
  Fix $0 < \gamma < \alpha \leq 1$ and $c \geq 2$.
  Let ${\left( T_{n}^{c-\text{bp}} \right)}_{n \in \N_0}$ denote the (non-planar) $c$-order branch point $(\alpha, \gamma)$-process.
  If $S_1^n, \ldots, S_L^n$ denotes the subtrees with least label larger than $c$ in the branch point closest to the root of $T_n^{c-\text{bp}}$, enumerated in the order of least leaf label, then
  \begin{align*}
    \P \left( L = l, S_1^n = {\tree[s]}_1, \ldots, S_m^n = {\tree[s]}_l \right)
    = \
    &\P \left( T_1' = {\tree[s]}_1^\text{re}, \ldots, T_k' = {\tree[s]}_l^\text{re} \right) \\ 
    &\ \times \P \left( \left(N_1, \ldots, N_L \right) = \left( n_1, \ldots, n_l \right) \right)
  \end{align*}
  where ${\tree[s]}_{l^\prime}^\text{re}$ is ${\tree[s]}_{l^\prime}$ relabelled by $[n_{l^\prime}]$ using the increasing bijection and $T_{l^\prime}^\prime \deq T_{n_{l^\prime}}$ for each $l^\prime \in [l]$, ${\left( T_{l^\prime}^\prime \right)}_{l^\prime \in [l]}$ are independent, and $\left( N_1, \ldots, N_L \right)$ are the numbers of customers sitting at the $L$ tables of an $(\alpha, (c - 2)\alpha - \gamma)$-$\crp$ with $n$ customers.
\end{cor}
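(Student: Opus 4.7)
My plan is to deduce the corollary directly from Lemma~\ref{lemma:k_bp_alphagammaprocess} by passing from the semi-planar process to its non-planar image under $\hat{\pi}$. By construction, the non-planar $c$-order branch point $(\alpha,\gamma)$-process is obtained by applying $\hat{\pi}$ at each step to the semi-planar process, so the joint law asserted in the corollary is the push-forward of the joint law in Lemma~\ref{lemma:k_bp_alphagammaprocess} under ``forgetting the left-to-right ordering of subtrees at the root branch point'' followed by re-enumerating those subtrees by order of least leaf label rather than by gap-then-left-to-right.

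The assertion that $T_1',\ldots,T_l'$ are independent with $T_{l'}'\deq T_{n_{l'}}$ is essentially immediate: Lemma~\ref{lemma:k_bp_alphagammaprocess} yields, conditional on the subtree sizes, independence of the relabelled semi-planar subtrees with each one distributed as a semi-planar $\widehat{T}_{n_{l,j}}$, and applying $\hat{\pi}$ turns a semi-planar $(\alpha,\gamma)$-tree into a non-planar $(\alpha,\gamma)$-tree (see Theorem~\ref{thm:planarstationarity} and the comment immediately following Definition~\ref{def:planaralphagamma}). Re-indexing the subtrees by order of least leaf label only permutes the factors of the product and preserves both the conditional independence and each marginal.

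For the size distribution, the task reduces to showing that the combination of the $\dirmult^n(\alpha,\ldots,\alpha,\alpha-\gamma)$ over the $c-1$ gaps with the independent $\ocrp^{y_l}(\alpha,\alpha)$'s for $l<c-1$ and the final $\ocrp^{y_{c-1}}(\alpha,\alpha-\gamma)$ within gaps yields the claimed CRP size distribution when the resulting blocks are enumerated by least element. The approach I would take is a step-by-step comparison of one-step Markov transition probabilities on the size sequence. The projected size sequence is Markovian because, at every insertion step, the probability that the incoming customer joins a given existing subtree $S_j$ or opens a new one depends only on the current sizes $(N_1,\ldots,N_L)$ and $L$; summing the semi-planar weights over all positions within the root branch point that yield the same non-planar outcome produces one-step transition probabilities of the form ``$(N_j-\alpha)$ for joining $S_j$'' and a matching constant for creating a new subtree, both normalised by $n' + \text{const}(\alpha,\gamma,c)$ where $n'$ is the current total. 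Matching these with the corresponding transitions of the stated CRP and using that both Markov chains start from the empty configuration gives equality in law at time $n$.

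The main obstacle is the combinatorial book-keeping in that summation: one must carefully add the gap-level $\dirmult$ contributions and the within-gap $\ocrp$ ``new table'' weights and recognise the telescoping simplification to the CRP transition kernel. A more calculation-heavy alternative is to write out $\P(L=l,N_1=n_1,\ldots,N_l=n_l)$ in closed form from Lemma~\ref{lemma:k_bp_alphagammaprocess} using~\eqref{eq:dirmult} and~\eqref{eq:ocrp_tableorder}, then collect the resulting products and identify them directly with the closed-form pmf for the CRP size vector. Either way, the stated factorization of the joint distribution finally follows because the ``which subtree'' choices driving the size sequence and the internal $(\alpha,\gamma)$-growth inside each subtree are governed by disjoint, independent insertion weights.
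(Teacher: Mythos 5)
Your overall strategy --- push Lemma~\ref{lemma:k_bp_alphagammaprocess} forward under $\hat{\pi}$, obtain the conditional independence and the $(\alpha,\gamma)$-marginals of the relabelled subtrees essentially for free, and identify the law of the size vector separately --- is exactly how the corollary is meant to be read: the paper supplies no separate proof, treating it as an immediate consequence of the lemma, and your re-enumeration and projection arguments for the first part are fine. Your one-step transition argument for the size sequence is also the right mechanism, and is in fact simpler than you make it: you do not need the gap/oCRP decomposition at all, because already in the non-planar process the probability that the next leaf joins the existing subtree $S_j$ is $(N_j-\alpha)/(n+(c-1)\alpha-\gamma)$, while the probability that it opens a new subtree is the full branch-point weight $((c+L-1)\alpha-\gamma)/(n+(c-1)\alpha-\gamma)$, so the size process is autonomously Markov with transitions read off directly from Definition~\ref{def:branchpointalphagammagrowth}.

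The one genuine issue is that you assert these transitions ``match the stated CRP'' without carrying out the match. Writing the new-subtree weight as $L\alpha+\theta$ forces $\theta=(c-1)\alpha-\gamma$, not $(c-2)\alpha-\gamma$: the seating rule you derive is that of an $(\alpha,(c-1)\alpha-\gamma)$-$\crp$. A sanity check at $c=2$ confirms this: there is a single gap of weight $\alpha-\gamma$, Lemma~\ref{lemma:k_bp_alphagammaprocess} says the ordered sizes follow an $(\alpha,\alpha-\gamma)$-$\ocrp$, whose unordered block sizes form an $(\alpha,\alpha-\gamma)$-$\crp$, i.e.\ $\theta=(c-1)\alpha-\gamma$, whereas the printed parameter $(c-2)\alpha-\gamma=-\gamma$ gives a different law (e.g.\ a different probability that all $n$ new leaves sit in one subtree). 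So once the bookkeeping is done honestly your argument proves the corollary with parameter $(c-1)\alpha-\gamma$ and thereby exposes what appears to be an off-by-one in the statement as printed; as written, your claim that the computation matches the stated $\crp$ would fail. Carry the summation through and record the parameter you actually obtain rather than asserting the match.
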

We will now prove some useful versions of Lemma~\ref{lemma:condind} and Theorem~\ref{thm:planarstationarity} for the semi-planar $(\alpha, \gamma)$-growth process with a special tree as a starting point $\that_0 \in \T_{[n_0]}$ with associated weights $\mathbf{w}_0 = {\left( w_{x, l}^0 \right)}_{x \in \insertable (\that_0), 0 \leq l \leq c_x - 1}$, where we use the convention that $c_x = 1$ for $x \in \edge(\that_0)$ and $1 \leq l \leq c_x - 1$ if $x \in \branchpoints (\that)$ with $c_x$ children.
Crucially, we allow these weights to be different from the weight specification of the growth process.
The result that we will now outline have wider implications for label swapping in more general structures that induce partitions and come about from a growth procedure, such as ordered Chinese Restaurant Processes.
However, we will not investigate this further in this paper.

In the following we need to be careful about `where' the modified weights appear after the insertion of a leaf according to the growth rule.
Hence for any $\that \in \Thatspace_{[n]}$ consider how $\that^\prime = \hat{\varphi}\left( \that, n+1, x, l \right)$ is formed by carrying out one step of the semi-planar $(\alpha, \gamma)$-growth rule, where $x \in \insertable (\that)$ and $0 \leq l \leq c_x - 1$.
We will organize the weights of $\that^\prime$ such that
\begin{align}
  \label{eq:organized_weight_schemes}
  w_{x^\prime,l^\prime}^\prime
  =
  \begin{cases}
    \alpha & x^\prime = x \in \branchpoints (\that), l^\prime = l \\
    w_{x, l - 1} & x^\prime = x \in \branchpoints (\that), l \leq l^\prime \leq c_{x^\prime} - 1 \\
    \alpha - \gamma & x^\prime = (n+1)\downarrow \in \branchpoints (\that^\prime), x \in \edge (\that), l^\prime = 1 \\
    \gamma & x = e_{uv} \in \edge (\that), x^\prime = e_{u \left( n+1 \right) \downarrow}, l^\prime = 0 \\
    w_{x, 0} & x = e_{uv} \in \edge (\that), x^\prime = e_{\left( n+1 \right) \downarrow v}, l^\prime = 0 \\
    1-\alpha & x^\prime = n+1 \in \edge (\that^\prime), l^\prime = 0 \\
    w_{x, l} & \text{otherwise.}
  \end{cases}
\end{align}
for every $x^\prime \in \insertable (\that^\prime)$ and $0 \leq l^\prime \leq c_{x^\prime} - 1$.
This is illustrated in Figure~\ref{fig:weight_ordering}.
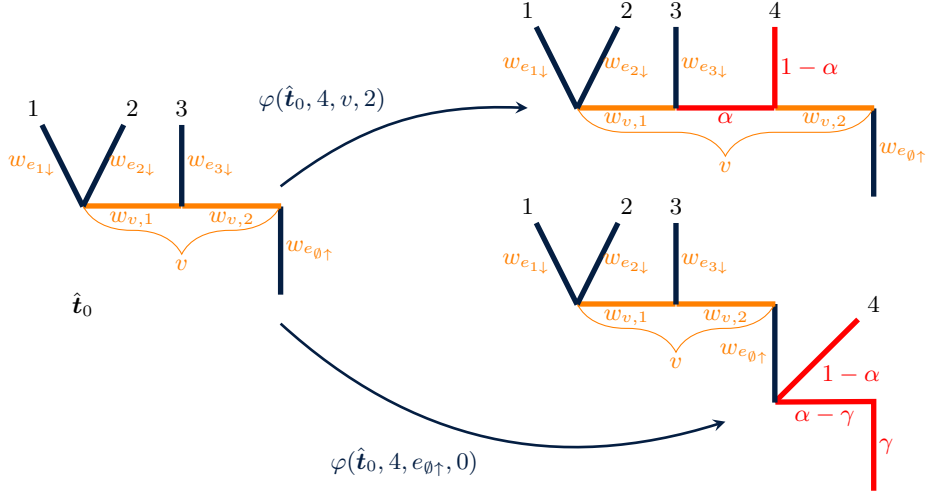
\begin{figure}[t]\label{fig:weight_ordering}
  \centering
  \begin{tikzpicture}[%
    bp/.style={rectangle, draw, inner sep=0pt, minimum width=0pt, minimum height=0pt},
    comment/.style={circle, draw, color=red},
    leaf/.style={inner sep=1pt},
    weights/.style={auto, swap, orange, inner sep = 1pt},
    label/.style={text=oxfordblue},
    line/.style={-, line width = 2pt, color = oxfordblue},
    arrowline/.style={->, line width = 1pt, color = oxfordblue, >=stealth},
  scale=1.3]
  \tikzstyle{every node}=[font=\small]

  \begin{scope}[xscale = -1]
    \node (0) at (0,0) {};
    \node[bp] (bp_start) at (0,1) {};
    \node[bp] (bp_mid) at (1, 1) {};
    \node[bp] (bp_end) at (2, 1) {};
    \node (1) at (2.5, 2) {$1$};
    \node (2) at (1.5, 2) {$2$};
    \node (3) at (1, 2) {$3$};

    \draw[line] (0) to node[weights] {$w_{e_{\emptyset \uparrow}}$} (bp_start);
    \draw[line, color = orange] (bp_start.east) to node[weights, swap] {$w_{v, 2}$} (bp_mid) to node[weights, swap] {$w_{v, 1}$} (bp_end.east);
    \draw[line] (bp_mid.south) to node[weights] {$w_{e_{3 \downarrow}}$} (3);
    \draw[line] (bp_end.south) to node[weights, swap, anchor = east] {$w_{e_{1 \downarrow}}$} (1);
    \draw[line] (bp_end.south) to node[weights, anchor = west] {$w_{e_{2 \downarrow}}$} (2);

    \draw [decorate,decoration={brace,amplitude=18pt}, orange] (bp_start.south west) to (bp_end.south east);
    \node [orange] at (1, 0.4) {$v$};
    \node at (2, 0) {$\that_0$};
  \end{scope}
  \begin{scope}[shift = {(6,1)}, xscale = -1]
    \node (0) at (0,0) {};
    \node[bp] (bp_start) at (0,1) {};
    \node[bp] (bp_mid1) at (1, 1) {};
    \node[bp] (bp_mid2) at (2, 1) {};
    \node[bp] (bp_end) at (3, 1) {};
    \node (1) at (3.5, 2) {$1$};
    \node (2) at (2.5, 2) {$2$};
    \node (3) at (2, 2) {$3$};
    \node (4) at (1, 2) {$4$};

    \draw[line] (0) to node[weights] {$w_{e_{\emptyset \uparrow}}$} (bp_start);
    \draw[line, color = orange] (bp_start.east) to node[weights, swap] {$w_{v, 2}$} (bp_mid1);
    \draw[line, color = red] (bp_mid1) to node[weights, swap, red] {$\alpha$} (bp_mid2);
    \draw[line, color = orange] (bp_mid2) to node[weights, swap] {$w_{v, 1}$} (bp_end.east);
    \draw[line] (bp_mid2.south) to node[weights] {$w_{e_{3 \downarrow}}$} (3);
    \draw[line, color = red] (bp_mid1.south) to node[weights, color = red] {$1-\alpha$} (4);
    \draw[line] (bp_end.south) to node[weights, swap, anchor = east] {$w_{e_{1 \downarrow}}$} (1);
    \draw[line] (bp_end.south) to node[weights, anchor = west] {$w_{e_{2 \downarrow}}$} (2);

    \draw [decorate,decoration={brace,amplitude=18pt}, orange] (bp_start.south west) to (bp_end.south east);
    \node [orange] at (1.5, 0.4) {$v$};
  \end{scope}
  \begin{scope}[shift = {(5,-1)}, xscale = -1]
    \node[bp] (0) at (0,0) {};
    \node[bp] (bp_start) at (0,1) {};
    \node[bp] (bp_mid) at (1, 1) {};
    \node[bp] (bp_end) at (2, 1) {};
    \node (1) at (2.5, 2) {$1$};
    \node (2) at (1.5, 2) {$2$};
    \node (3) at (1, 2) {$3$};
    
    \node (4) at (-1, 1) {$4$};
    \node[bp] (bp_extra) at (-1, 0) {};
    \node (new_root) at (-1, -1) {};
    \draw[line, color = red] (new_root) to node[weights, color = red] {$\gamma$} (bp_extra.north) to node[weights, color = red, swap] {$\alpha - \gamma$} (0.east);
    \draw[line, color = red] (0.center) to node[weights, color = red] {$1 - \alpha$} (4);

    \draw[line] (0.south) to node[weights, swap] {$w_{e_{\emptyset \uparrow}}$} (bp_start.center);
    \draw[line, color = orange] (bp_start.east) to node[weights, swap] {$w_{v, 2}$} (bp_mid) to node[weights, swap] {$w_{v, 1}$} (bp_end.east);
    \draw[line] (bp_mid.south) to node[weights] {$w_{e_{3 \downarrow}}$} (3);
    \draw[line] (bp_end.south) to node[weights, swap, anchor = east] {$w_{e_{1 \downarrow}}$} (1);
    \draw[line] (bp_end.south) to node[weights, anchor = west] {$w_{e_{2 \downarrow}}$} (2);

    \draw [decorate,decoration={brace,amplitude=18pt}, orange] (bp_start.south west) to (bp_end.south east);
    \node [orange] at (1, 0.4) {$v$};
  \end{scope}
  \draw[arrowline] (0, -0.2) to [bend right] node[anchor = north east] {$\varphi(\that_0, 4, e_{\emptyset \uparrow}, 0)$} (4.5, -1.2);
  \draw[arrowline] (0, 1.2) to [bend left = 20] node[anchor = south east] {$\varphi(\that_0, 4, v, 2)$} (2.5, 2);
\end{tikzpicture}    
  \caption{Illustration of the structuring of weights following a leaf insertion into a tree with pre-specified weights that might not coincide with the usual weights of the $(\alpha, \gamma)$-growth process, as described by~\eqref{eq:organized_weight_schemes}.
  The weights appearing as a result of the leaf insertion are marked in red.}
\end{figure}
The \textit{semi-planar $(\alpha, \gamma)$-growth process started from $\that_0$} is simply the same growth process as in Definition~\ref{def:planaralphagamma}, started from $\that_0 \in \Thatspace_{[n_0]}$, i.e.\ where
\begin{align}
  \P \left( \That_1 = \varphi \left( \that_0, x, l, n_0 + 1 \right) \right)
  = \frac{w_{x, l}^0}{\sum_{x, l} w_{x, l}^0}
  \label{eq:first_step_generalstart}
\end{align}
and we iteratively use the weight scheme defined in~\eqref{eq:organized_weight_schemes} to form $T_{n+1}$ from $T_n$ for every $\nin$.

We note that the above specification of the weights is consistent with the original definition of the semi-planar $(\alpha, \gamma)$-growth process, if the weights $\mathbf{w}_0$ above are the same as those specified in Definition~\ref{def:planaralphagamma}.
The importance of organizing the weights as in~\eqref{eq:organized_weight_schemes} is that we can handle explicitly starting a semi-planar $(\alpha, \gamma)$-growth process from a tree with associated weights different from the ones coming out of the semi-planar $(\alpha, \gamma)$-growth process.
This is the case for the semi-planar internal $(\alpha, \gamma)$-growth process as well as the semi-planar $c$-order branch points $(\alpha, \gamma)$-growth process.
We have the following result, where we recall that $f$ is the transformation function from Definition~\ref{def:planaralphagammachain}.
\begin{prop}
  Fix $n_0 \in \N$ and $\that_0 \in \T_{[n_0]}$ with associated non-negative weights $\mathbf{w}_0 = {\left( w_{x, l}^0 \right)}_{x \in \insertable (\that_0), 0 \leq l \leq c_x - 1}$ such that $\sum_{x, l} w_{x,l}^0 > 0$.
  Let ${\left( \That_m \right)}_{1 \leq m \leq n}$ be the first $n$ steps of the semi-planar $(\alpha, \gamma)$-growth process started from $\that_0$.
  Fix $n_0 + 1 \leq i \leq \tildei \leq n_0 + n$ and define $E_{i, \tildei} = \left\{ \tildei = \max \{i, a, b \} \ \text{in} \ \That_n \right\}$, with $a$ and $b$ from Definition~\ref{def:planaralphagammachain}.
  Then $E_{i, \tildei} \independent \That_{\tildei - 1}$ and, conditional on the event $E_{i, \tildei}$, $\hat{\rho}^* \left( \hat{\tau}^* \left( \That_n, i, \tildei \right), \tildei \right)$ and $T_{n-1}$ have the same distribution.
Furthermore, if $I \sim \Unif \left( [n_0 + n] \setminus [n_0] \right)$, then
  \begin{align}
    \P \left( \hat{\rho}^* \left( f\left( \That_n, I \right) \right) \in \cdot \right) = \P \left( \That_{n-1} \in \cdot \right)
    \label{eq:generalresult_downstep_downstepdistribution}
  \end{align}
  for any $n \in \N$.
  \label{prop:generalresult_downstep_stationary_lemma_theorem}
\end{prop}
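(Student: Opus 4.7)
The plan is to mirror the proofs of Lemma~\ref{lemma:condind} and Theorem~\ref{thm:planarstationarity} line by line, the only change being the presence of the general starting tree $\that_0$ and weight scheme $\mathbf{w}_0$. For the first two assertions, I would carry out the same case analysis on the position of leaf $i$ in $\That_{\tildei-1}$: whether $i < \tildei$ or $i = \tildei$, whether the parent branch point of $i$ is binary or multifurcating, and, in the multifurcating case, whether $i$ is leftmost amongst the least-labeled leaves of the subtrees of that branch point. In each case I would characterize $E_{i,\tildei}$ as the intersection of an insertion event for $\tildei$ at a specified position in $\That_{\tildei-1}$ with the non-occurrence of insertions of $\tildei+1,\ldots, n_0 + n$ into a specific collection of ``forbidden'' positions at the subsequent growth steps, exactly as in Figure~\ref{fig:ForbiddenEdges_1}.

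The crucial observation that ensures the argument survives the generalization is that the restriction $i > n_0$ means leaf $i$ was inserted by the growth rule. By the weight-organization convention~(\ref{eq:organized_weight_schemes}), the local neighbourhood of $i$ inherits the standard $(\alpha,\gamma)$-weights: $1-\alpha$ on its leaf edge, $\alpha-\gamma$ on the rightmost slot of its parent branch point, $\alpha$ on any subsequently created slot in that branch point, and, if $i$ was inserted into an edge, $\gamma$ on its parent edge. These are precisely the weights appearing in the ``forbidden-weight sums to $1$'' calculation in the proof of Lemma~\ref{lemma:condind}, so that identity continues to hold in every sub-case here. Writing $W_m := W_0 + m - 1$ for the total insertion weight when constructing $\That_m$ from $\That_{m-1}$, the probability of $E_{i,\tildei}$ given $\That_{\tildei-1}$ then telescopes to a constant depending only on $i$, $\tildei$, $n$ and $W_0$, giving
\[
  \P \left( \{ \That_{\tildei-1} = \that \} \cap E_{i,\tildei} \right) = c_{i,\tildei} \cdot \P \left( \That_{\tildei-1} = \that \right)
\]
for every $\that$ in the state space, and hence $\That_{\tildei-1} \independent E_{i,\tildei}$. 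For the second assertion, I would then argue as in Lemma~\ref{lemma:condind} that under the conditional law of $E_{i,\tildei}$ one can reconstruct $\hat{\rho}^* \left( \hat{\tau}^* \left( \That_n, i, \tildei \right), \tildei \right)$ from $\That_{\tildei-1}$ by continuing the semi-planar growth process for the labels $\tildei+1, \ldots, n_0 + n$ while omitting the insertion that would have produced $\tildei$ and relabelling by the increasing bijection; the forbidden-weight identity renormalizes each remaining conditional insertion probability to the standard one, yielding exactly the law of $\That_{n-1}$.

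For~(\ref{eq:generalresult_downstep_downstepdistribution}) the argument is verbatim from the proof of Theorem~\ref{thm:planarstationarity}: for each $\that$ in the state space of $\That_{n-1}$, decompose on the events $\{I = i\} \cap E_{i,\tildei}$, use the independence of $I$ from $\That_n$, apply the two assertions just proven, and collapse the sum using $\sum_{\tildei \geq i} \P(E_{i,\tildei}) = 1$, which is valid for each fixed $i$ because $\tildei = \max\{i, a, b\}$ is a deterministic function of $\That_n$ and $i$. The main obstacle is the bookkeeping in verifying the forbidden-weight identity across all sub-cases in the presence of the arbitrary starting configuration $(\that_0, \mathbf{w}_0)$; however, since the local weights around any post-$\that_0$ leaf are the standard $(\alpha,\gamma)$-weights irrespective of $(\that_0, \mathbf{w}_0)$, the computations from Lemma~\ref{lemma:condind} transfer directly and no new phenomena appear.
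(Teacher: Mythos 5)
Your proposal is correct and follows essentially the same route as the paper: the paper's proof is precisely the observation that Lemma~\ref{lemma:condind} (and the computation from Theorem~\ref{thm:planarstationarity} for the final identity) applies almost verbatim, because the constraints $n_0 < i \leq \tildei$ together with the weight organization in~\eqref{eq:organized_weight_schemes} guarantee that none of the insertable parts entering the characterization of $E_{i,\tildei}$ carry the modified weights $\mathbf{w}_0$, so the forbidden-weight-sums-to-one identity survives. Your write-up in fact spells out the telescoping and the reconstruction step in more detail than the paper does.
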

\begin{proof}
  The proof of Lemma~\ref{lemma:condind} can used almost verbatim here, as we note that the restrictions on $i$ and $\tildei$ ensure none of the insertable parts of the trees $\That_{\tildei - 1}, \ldots, \That_n$ that contribute to the characterization of $E_{i, \tildei}$ have $\mathbf{w}_0$ as associated weights.
  This is a consequence of the way we have organized the weights in the growth process, combined with the definition of $a$ and $b$ from Definition~\ref{def:planaralphagammachain}.
\end{proof}
Note how Proposition~\ref{prop:generalresult_downstep_stationary_lemma_theorem} generalises Lemma~\ref{lemma:condind}.
This generalisation is useful, as have have the following corollaries, which are easily checked to satisfy the assumptions of Proposition~\ref{prop:generalresult_downstep_stationary_lemma_theorem}.
\begin{cor}%
\label{cor:internal_downupchain_stationary_lemma_theorem}
  Let ${\left( \That_m^\gamma \right)}_{m \in [n]}$ be the first $n$ steps of the semi-planar internal $(\alpha, \gamma)$-growth process.
Fix $2 \leq i \leq \tildei \leq n$ and, with $a$ and $b$ as in Definition~\ref{def:planaralphagammachain}, define $E_{i, \tildei}^\gamma = \left\{ \tildei = \max \{i, a, b\}\ \text{in}\ \That_n^\gamma \right\}$.
  Then $E_{i, \tildei}^\gamma \independent \That_{i-1}^\gamma$ and, conditional on the event $E_{i, \tildei}^\gamma$, $\hat{\rho} \left( \hat{\tau} \left( \That_n^\gamma, i, \tildei \right), \tildei \right)$ and $\That_{n-1}^\gamma$ have the same distribution.
  Furthermore, if $I \sim \Unif \left( \{2, \ldots, n + 1\} \right)$, then
  \begin{align}
    \P \left( \hat{\rho}^* \left( f\left( \That_n^\gamma, I \right) \right) \in \cdot \right) = \P \left( \That_{n-1}^\gamma \in \cdot \right)
    \label{eq:internal_downupchain_downstepdistribution}
  \end{align}
  for any $n \in \N$.
\end{cor}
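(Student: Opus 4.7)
The plan is to recognise the semi-planar internal $(\alpha,\gamma)$-growth process as the specialisation of the semi-planar $(\alpha,\gamma)$-growth process started from a non-standard tree with non-standard initial weights that is precisely covered by Proposition~\ref{prop:generalresult_downstep_stationary_lemma_theorem}. Once this identification is made, both assertions follow directly, and the only real content is checking that the weight-propagation scheme~\eqref{eq:organized_weight_schemes} applied to the modified initial weights reproduces the weight scheme of Definition~\ref{def:internalalphagammagrowth} at every step.

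More concretely, I would take $n_0 = 1$ and $\that_0$ to be the unique element of $\Thatspace_{[1]}$, and set the initial weight $w_{e_{1\downarrow},0}^0 = \gamma$ (instead of $1 - \alpha$). The first insertion is then governed by~\eqref{eq:first_step_generalstart}, and subsequent insertions by~\eqref{eq:organized_weight_schemes}. I would then verify by induction on $n$ that after $n$ insertions the weight $w_{e_{1\downarrow},0}$ remains $\gamma$; the key case is when a leaf is inserted on the edge $e_{1\downarrow}$, in which case the fourth and fifth clauses of~\eqref{eq:organized_weight_schemes} give the newly created upper edge the weight $\gamma$ (standard $\gamma$ for an internal edge) and the new lower edge the weight $w_{x,0} = \gamma$, so the edge incident to leaf $1$ continues to carry weight $\gamma$. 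All other edges and branch points acquire weights matching the standard $(\alpha,\gamma)$-scheme, as specified in Definition~\ref{def:internalalphagammagrowth}.

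With this identification in place, Proposition~\ref{prop:generalresult_downstep_stationary_lemma_theorem} applied with $n_0 = 1$ immediately yields the independence $E_{i,\tildei}^\gamma \independent \That_{\tildei - 1}^\gamma$ and the equality in distribution of $\hat{\rho}^*\bigl(\hat{\tau}^*(\That_n^\gamma, i, \tildei), \tildei\bigr)$ and $\That_{n-1}^\gamma$ conditional on $E_{i,\tildei}^\gamma$, provided $2 \leq i \leq \tildei$, which is precisely the condition $n_0 + 1 \leq i \leq \tildei$ in the proposition. The stationarity statement~\eqref{eq:internal_downupchain_downstepdistribution} then follows by exactly the same decomposition as in the proof of Theorem~\ref{thm:planarstationarity}: conditioning on $I = i$ and summing over $\tildei \geq i$,
\begin{align*}
\P\bigl(\hat{\rho}^*(f(\That_n^\gamma, I)) = \that\bigr)
&= \sum_{2 \leq i \leq \tildei} \P\bigl(\hat{\rho}^*(\hat{\tau}^*(\That_n^\gamma, i, \tildei), \tildei) = \that \mid E_{i,\tildei}^\gamma \cap \{I=i\}\bigr) \P(E_{i,\tildei}^\gamma \cap \{I = i\}) \\
&= \P(\That_{n-1}^\gamma = \that),
\end{align*}
using the independence of $I$ from the growth process.

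The only genuine obstacle is the bookkeeping step verifying that the weights emerging from~\eqref{eq:organized_weight_schemes} in the modified process coincide with those of Definition~\ref{def:internalalphagammagrowth} throughout the evolution; this is a routine but slightly tedious case analysis. Once that is settled, the corollary is a direct instance of the general proposition and requires no further probabilistic input.
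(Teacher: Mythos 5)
Your proposal is correct and matches the paper's intent exactly: the paper gives no separate proof of this corollary, stating only that it is ``easily checked to satisfy the assumptions of Proposition~\ref{prop:generalresult_downstep_stationary_lemma_theorem}'', and your identification ($n_0=1$, $\that_0$ the one-leaf tree with $w^0_{e_{1\downarrow},0}=\gamma$, plus the induction showing~\eqref{eq:organized_weight_schemes} keeps weight $\gamma$ on the edge incident to leaf $1$ and standard weights elsewhere) is precisely that check. The final display is not even needed separately, since the ``Furthermore'' clause of Proposition~\ref{prop:generalresult_downstep_stationary_lemma_theorem} already delivers~\eqref{eq:internal_downupchain_downstepdistribution} once the identification is made.
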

\begin{cor}%
\label{cor:korder_downupchain_stationary_lemma_theorem} 
  Fix $c \geq2$.
  Let ${\left( \That_m^{c-\text{bp}} \right)}_{m \in [n]}$ be the first $n$ steps of the semi-planar $c$-order branch point $(\alpha, \gamma)$-growth process started from $\that[s] \in \Thatspace_{[c]}$ with $\# \branchpoints (\that[s]) = 1$.
  Fix $c < i \leq \tildei \leq c + n$ and, with $a$ and $b$ as in Definition~\ref{def:planaralphagammachain}, define $E_{i, \tildei}^{c-\text{bp}} = \left\{ \tildei = \max \{i, a, b\}\ \text{in}\ \That_n^{c-\text{bp}} \right\}$.
  Then $E_{i, \tildei}^{c-\text{bp}} \independent \That_{i-1}^{c-\text{bp}}$ and, conditional on the event $E_{i, \tildei}^{c-\text{bp}}$, $\hat{\rho} \left( \hat{\tau} \left( \That_n^{c-\text{bp}}, i, \tildei \right), \tildei \right)$ and $\That_{n-1}^{c-\text{bp}}$ have the same distribution.
  Furthermore, if $I \sim \Unif \left( \{ c + 1, \ldots, c + n\} \right)$, then
  \begin{align}
    \P \left( \hat{\rho}^* \left( f\left( \That_n^{c-\text{bp}}, I \right) \right) \in \cdot \right) = \P \left( \That_{n-1}^{c-\text{bp}} \in \cdot \right)
    \label{eq:korder_downupchain_downstepdistribution}
  \end{align}
  for any $n \in \N$.
\end{cor}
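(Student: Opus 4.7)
The plan is to deduce Corollary~\ref{cor:korder_downupchain_stationary_lemma_theorem} directly from Proposition~\ref{prop:generalresult_downstep_stationary_lemma_theorem} by an appropriate identification of starting trees and weights. Specifically, I would set $n_0 = c$ and $\that_0 = \that[s]$, and equip $\that_0$ with the non-standard weight collection $\mathbf{w}_0 = {(w_{x,l}^0)}_{x \in \insertable (\that_0), 0 \leq l \leq c_x - 1}$ prescribed in Definition~\ref{def:branchpointalphagammagrowth}. The positivity assumption $\sum_{x,l} w_{x,l}^0 > 0$ of the proposition is met because the unique branch point of $\that[s]$ carries $c - 1 \geq 1$ gap positions with weights $\alpha$ or $\alpha - \gamma$, all strictly positive (using $0 < \gamma < \alpha$).

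Next I would verify that the semi-planar $c$-order branch point $(\alpha, \gamma)$-growth process ${(\That_n^{c-\text{bp}})}_{n \in \N_0}$ is exactly the semi-planar $(\alpha, \gamma)$-growth process started from $\that_0$ with these weights, under the propagation rule~\eqref{eq:organized_weight_schemes}. The point is that the zero weights carried by $e_{\emptyset \uparrow}$ and by $e_{1\downarrow}, \ldots, e_{c\downarrow}$ are preserved forever (no insertion can occur at a zero-weight location, so~\eqref{eq:organized_weight_schemes} never triggers a weight modification there), while every edge and branch point created during the growth inherits the standard $(\alpha, \gamma)$ weights. This matches the first insertion rule~\eqref{eq:first_step_generalstart} and the subsequent growth rule of Definition~\ref{def:branchpointalphagammagrowth} verbatim.

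With this identification in place, the range restriction $c < i \leq \tildei \leq c + n$ in the corollary is precisely $n_0 + 1 \leq i \leq \tildei \leq n_0 + n$ of the proposition, and the event $E_{i,\tildei}^{c-\text{bp}}$ coincides with $E_{i,\tildei}$ of the proposition. All three claims — the independence $E_{i,\tildei}^{c-\text{bp}} \independent \That_{i-1}^{c-\text{bp}}$, the conditional equality in distribution $\hat{\rho}^*(\hat{\tau}^*(\That_n^{c-\text{bp}}, i, \tildei), \tildei) \deq \That_{n-1}^{c-\text{bp}}$ conditional on $E_{i,\tildei}^{c-\text{bp}}$, and the overall identity~\eqref{eq:korder_downupchain_downstepdistribution} — then follow immediately from the corresponding parts of Proposition~\ref{prop:generalresult_downstep_stationary_lemma_theorem}. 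The only point requiring a brief sanity check (and the one I anticipate being the main obstacle, though still routine) is confirming that the ``forbidden edges'' appearing in the characterization of $E_{i,\tildei}^{c-\text{bp}}$ — namely the leaf edge $i$, the adjacent gaps of its parent branch point, and possibly the parent edge of that branch point — always carry standard $(\alpha, \gamma)$ weights summing to $1$. This holds because $i > c$ forces the parent branch point of $i$ either to be a newly created branch point (all surrounding weights standard) or to be the initial branch point (whose gap weights $\alpha$ and $\alpha - \gamma$ are also the standard ones); in particular, neither $e_{\emptyset \uparrow}$ nor any $e_{j\downarrow}$ with $j \in [c]$ ever appears in the local search used to define $\tildei$, so the zero weights play no role.
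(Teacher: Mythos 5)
Your proposal is correct and follows exactly the route the paper intends: the paper derives this corollary by asserting that the $c$-order branch point growth process is ``easily checked to satisfy the assumptions of Proposition~\ref{prop:generalresult_downstep_stationary_lemma_theorem}'', and your argument is precisely that check, including the two points that actually need verifying (the zero weights on $e_{\emptyset\uparrow}, e_{1\downarrow}, \ldots, e_{c\downarrow}$ persist under the propagation rule~\eqref{eq:organized_weight_schemes} because no insertion ever occurs there, and the forbidden parts characterizing $E_{i,\tildei}^{c\text{-bp}}$ carry only freshly created standard weights summing to $1$ since $c < i$). No gaps.
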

%
%
%
%
%
With the above machinery in place we can now characterize the decorated trees in the following way:
\begin{lemma}[Spatial Markov property]\label{lemma:spatialMarkovProperty}
  Fix $n \in \N$.
  Let ${\left( T_m \right)}_{m \in [n]}$, ${\left( T_m^\gamma \right)}_{m \in [n]}$, and ${\left( T_m^{c-\text{bp}} \right)}_{m \in [n]}$ denote the first $n$ steps of the $(\alpha, \gamma)$-, internal $(\alpha, \gamma)$-, and $c$-order branch point $(\alpha, \gamma)$-growth processes, respectively.
  Fix $\tree \in \T_{[n]}$, and define $\tree^* = \pi_{[k]}^{*n} (\tree) = (\tree[s], {(g^{-1}(\{x\})}_{x \in \insertablef[s]}) \in \T_{[k]}^{*n}$ and $\tree^\bullet = \pi_{[k]}^{\bullet n} (\tree) = (\tree[s], \mathbf{y}) \in \T_{[k]}^{\bullet n}$.
Then
\begin{enumerate}
  \item\label{lemma:spatial1} ${\left( \intstruct (x) \right)}_{x \in \insertablef[s]}$ are conditionally independent given $\pi_{[k]}^{\bullet n} \left( T_n \right) = \tree^\bullet$,
  \item\label{lemma:spatial1a} ${\left( \intstruct (x) \right)}_{x \in \insertablef[s]}$ are conditionally independent given $\pi_{[k]}^{* n} \left( T_n \right) = \tree^*$,
  \item\label{lemma:spatial2} ${\left( \intstruct (x) \right)}_{x \in \insertablef[s]}$ and ${\left( g^{-1}(\{x\}) \right)}_{x \in \insertablef[s]}$ are conditionally independent given $\pi_k^{\bullet n} \left( T_n \right) = \tree^\bullet$,
    \item\label{lemma:spatial3} $\P \left( \intstruct (x) = \cdot \ \middle \vert \ \pi_k^{\bullet n} \left( T_n \right) = \tree^\bullet \right) = \P(T_{y_x}^\gamma = \cdot )$ for each internal $x \in \edge (\tree[s])$,
    \item\label{lemma:spatial4} $\P \left( \intstruct (x) = \cdot \ \middle \vert \ \pi_k^{\bullet n} \left( T_n \right) = \tree^\bullet \right) = \P(T_{y_x} = \cdot )$ for each external $x \in \edge (\tree[s])$, and
    \item\label{lemma:spatial5} $\P \left( \intstruct (x) = \cdot \ \middle \vert \ \pi_k^{\bullet n} \left( T_n \right) = \tree^\bullet \right) = \P(T_{y_x}^{c-\text{bp}} = \cdot )$ for each $x \in \branchpoints (\tree[s])$ with $c$ children.
  \end{enumerate}
  \label{lemma:InternalStructure}
\end{lemma}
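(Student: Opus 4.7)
The plan is to construct $T_n$ inductively from $T_k$ by applying the $(\alpha, \gamma)$-growth rule to insert leaves $k+1, \ldots, n$ in turn, and to exploit the fact that this induces a generalised P\'olya urn on the insertable parts of $T_k$. Throughout the construction the decoration function $g$ extends naturally to partition the insertable parts of the running tree into groups indexed by $\insertablef[s]$: each insertable part of the growing tree belongs to the group of the unique $x \in \insertablef[s]$ reached as the first shape-element on its ancestral line. My first step would be to verify a crucial accounting identity: each leaf insertion into the group of $x$ increases the total weight of that group by exactly $+1$ and leaves the total weight of every other group unchanged. This reduces to a short case check over the three types of insertion (external edge, internal edge, branch point), since in each case the replaced weight is rebuilt into the sum of the weights of the newly created parts plus $1$; combined with the fact that the total weight of the tree at step $m$ is $m - \alpha$, this identifies the between-group dynamics of leaves $k+1, \ldots, n$ as a generalised P\'olya urn on $\insertablef[s]$ with initial weights ${(w_x)}_{x \in \insertablef[s]}$ from~\eqref{eq:alphagamma_weights}.

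With this urn in hand, Proposition~\ref{prop:polyaurn} gives that $\mathbf{y}$ has a $\dirmult^{n-k}$ distribution with parameters ${(w_x)}_{x \in \insertablef[s]}$, but more importantly it yields the conditional structure we need: the sequence of groups visited by the new leaves and the within-group dynamics decouple in the following sense. At each step $m$, if the group of $x$ currently has internal parts $p_1, \ldots, p_\ell$ carrying weights $v_1, \ldots, v_\ell$, then $v_1 + \cdots + v_\ell = w_x + y_x^{(m)}$, and the probability of inserting into $p_i$ conditional on hitting group $x$ is $v_i/(w_x + y_x^{(m)})$, independent of the state of the other groups. Hence, conditional on $\mathbf{y}$, the internal structures $\intstruct(x)$ evolve independently across $x$, giving (i). Refining this conditioning to $\tree^*$ only adds a uniformly random assignment of labels to each group, consistent with the sizes $y_x$, which is conditionally independent of the within-group insertion orders; this yields (ii) and (iii).

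For the distributional identities (iv), (v), (vi), I would trace the within-group dynamics: each time a leaf is inserted into the group of $x$, an insertable part is chosen with probability proportional to its $(\alpha, \gamma)$-weight. For an external edge $x$, the group starts as a single leaf-edge of weight $1-\alpha$, so the relabelled within-group growth is precisely the $(\alpha, \gamma)$-growth process of Definition~\ref{def:growthprocess_alphagamma}, giving (v). For an internal edge $x$, the group starts as a single edge of weight $\gamma$, and because the distinguished root-edge of $\intstruct(x)$ carries weight $\gamma$ throughout (any insertion below it would belong to a different group), the dynamics matches Definition~\ref{def:internalalphagammagrowth} and gives (iv). For a branch point $x$ with $c$ children, the group starts as a single branch point of total weight $(c-1)\alpha - \gamma$, with the stubs $e_{\emptyset\uparrow}, e_{1\downarrow}, \ldots, e_{c\downarrow}$ frozen at weight $0$ since any insertion below them would belong to a different group; this matches Definition~\ref{def:branchpointalphagammagrowth} and gives (vi).

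The main obstacle I anticipate is the careful bookkeeping around these ``stubs'' in the internal-edge and branch-point cases: the internal structures of Definition~\ref{def:internalstructure} are defined with extra boundary vertices added for convenience, and one needs to verify that these inherit exactly the weights required by the specialised growth processes, and in particular that the urn's initial weight $w_x$ equals the total weight of the associated stub tree. Once this bookkeeping is sorted out, the matching between the within-group dynamics and the three specialised growth processes follows by induction on the number of insertions into that group, and the lemma falls out of the P\'olya urn decomposition.
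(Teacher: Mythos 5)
Your proposal is correct and takes essentially the same route as the paper: condition on $T_k = \tree[s]$, observe that the insertions of leaves $k+1,\ldots,n$ decompose into a between-group generalised P\'olya urn on $\insertablef[s]$ (with the weight-accounting identity you state) plus within-group dynamics that, given the counts, are independent across groups and match the $(\alpha,\gamma)$-, internal, and $c$-order branch point growth processes respectively. The paper's own proof is considerably terser — it simply appeals to the independence of leaf insertions and reads off the initial weights — so your explicit urn bookkeeping is a faithful expansion of the same argument rather than a different one.
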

\begin{proof}
  Firstly note that the conditional independence in~\ref{lemma:spatial1a} follows directly from the independence of the insertions of new leaves in the $(\alpha, \gamma)$-growth process.
  This observation directly implies~\ref{lemma:spatial1}.
  For~\ref{lemma:spatial2}, fix $x \in \insertablef[s]$, and note that the probability of $\intstruct (x)$ being equal to a specific tree is independent of the labels, by virtue of the growth process.
  Next,~\ref{lemma:spatial3} and~\ref{lemma:spatial4} follows by noting how $T_n$ is produced from $T_k = \tree[s]$, since the latter is a part of what we condition on.
  For any $x \in \edge(\tree[s])$, note how $\intstruct_k^{T_k} (x)$ will be the unique element of $\T_{[1]}$, where the edge has associated weight $\gamma$ if $x$ is an internal edge and $1-\alpha$ if $x$ is a leaf edge, respectively.
  Now observe how each $\intstruct (x)$ grows according to either the $(\alpha, \gamma)$- or the internal $(\alpha, \gamma)$-growth process, according to the initial weight of the leaf. 
  For (vi), note how for fixed $x \in \branchpoints(\tree[s])$ with $y_x > 0$ we can construct $\intstruct (x)$ using the $c$-order branch point $(\alpha, \gamma)$-growth process.
  The insertion of a leaf into the branch point, $x$, corresponds to opening up a new table in a $\crp$, and for every such insertion the weight at $x$ increases by $\alpha$.
  By construction of $\intstruct (x)$ all of these subtrees will be located at the branch point closest to the root in $\intstruct (x)$, and each of the subtrees evolves as a $(\alpha, \gamma)$-growth process.
  Hence we obtain the above characterization.
\end{proof}
In addition to the above characterization we have the result below, which will play a role in the relabelling of a decorated $[k]$-tree later on, where we recall that $\tilde{y}_x$, as in Definition~\ref{def:resample}, is equal to $y_x - 1$ for external $x \in \edge(\tree[s])$ and equal to $y_x$ otherwise.
To ease notation, we will for $(\tree[s], \mathbf{y}) \in \T_{[k]}^{\bullet n}$ set
\begin{align*}
  \tilde{y}_x =
  \begin{cases}
    y_x - 1 & \text{if $x \in \edge(\tree[s])$ is external,} \\
    y_x & \text{otherwise,}
  \end{cases}
\end{align*}
for all $x \in \insertablef[s]$.
\begin{lemma}
  Let ${\left( T_m \right)}_{1 \leq m \leq n}$ denote the first $n$ elements of the $(\alpha, \gamma)$-growth process.
  Fix $\tree \in \T_{[n]}$ and define $\pi_{[k]}^{\bullet n} ( \tree ) = (\tree[s], \mathbf{y}) \in \T_{[k]}^{\bullet n}$.
  Then
  \begin{align*}
    \P \left( T_{k+1} = \varphi (\tree[s], x, k+1) \ \middle \vert \ \pi_{[k]}^{\bullet n} \left( T_n \right) = (\tree[s], y) \right)
    = \frac{\tilde{y}_x}{n-k}
  \end{align*}
  for each $x \in \insertablef[s]$.
  \label{lemma:locationof_k_plus1}
\end{lemma}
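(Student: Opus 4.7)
The plan is to identify the counts $(\tilde{y}_x)_{x \in \insertablef[s]}$ as the state of a generalized P\'olya urn and then invoke Proposition~\ref{prop:polyaurn}\ref{prop:polyaurn_conditioning}. Specifically, conditional on $T_k = \tree[s]$, grow $T_{k+1}, T_{k+2}, \ldots, T_n$ by the $(\alpha, \gamma)$-growth process, and for each $j \in [n-k]$ record the label $X_j \in \insertablef[s]$ of the first insertable part of $\tree[s]$ encountered on the ancestral line from leaf $k+j$ in $T_{k+j}$. Equivalently, $X_j = x$ iff leaf $k+j$ is inserted somewhere inside the ``sub-structure of $x$'' (the union of $\{x\}$ and all parts of $T_{k+j-1}$ created within $x$ by the previous insertions); for $j=1$ this is simply insertion into $x$ itself, producing $\varphi(\tree[s], x, k+1)$.

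First I would verify that $(X_j)_{j \in [n-k]}$ is a generalized P\'olya urn with initial weights $\mathbf{w} = (w_x)_{x \in \insertablef[s]}$ given by~\eqref{eq:alphagamma_weights}. This amounts to the accounting identity that inserting a leaf into the sub-structure of $x$ always increases the \emph{total} $(\alpha, \gamma)$-weight of that sub-structure by exactly $1$: for an external edge $(1-\alpha) \to \gamma + (1-\alpha) + (1-\alpha) + (\alpha - \gamma) = 2-\alpha$; for an internal edge $\gamma \to \gamma + \gamma + (1 - \alpha) + (\alpha - \gamma) = 1 + \gamma$; and for a branch point with $c$ children $(c-1)\alpha - \gamma \to c\alpha - \gamma + (1-\alpha) = (c-1)\alpha - \gamma + 1$. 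Summing initial weights over $\insertablef[s]$ gives the well-known $k - \alpha$, so the conditional probability of $X_{j+1} = x$ given $(X_1, \ldots, X_j)$ equals $(w_x + N_x^j)/(k - \alpha + j)$, where $N_x^j = \#\{j^\prime \leq j : X_{j^\prime} = x\}$. This is exactly~\eqref{eq:polyaurn}.

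Next I would identify $N_x^{n-k}$ with $\tilde{y}_x$: by definition of the decoration function $g$, a leaf $i > k$ satisfies $g(i) = x$ iff its insertion landed inside the sub-structure of $x$, and the original leaf labelled by $i \in [k]$ accounts for the additional $+1$ in the external-edge case, so $\tilde{y}_x = N_x^{n-k}$ exactly. In particular, the conditioning event $\{\pi_{[k]}^{\bullet n}(T_n) = (\tree[s], \mathbf{y})\}$ (together with $T_k = \tree[s]$, which is implied) coincides with $\{\mathbf{N}^{n-k} = (\tilde{y}_x)_x\}$ on the urn side.

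Finally, Proposition~\ref{prop:polyaurn}\ref{prop:polyaurn_conditioning} yields
\[
\P\!\left(X_1 = x \ \middle\vert\ \mathbf{N}^{n-k} = (\tilde{y}_x)_x\right) = \frac{\tilde{y}_x}{n-k},
\]
and since $\{X_1 = x\} = \{T_{k+1} = \varphi(\tree[s], x, k+1)\}$, the lemma follows. I do not anticipate any genuine obstacle; the only care needed is in the weight-accounting step, which is purely mechanical.
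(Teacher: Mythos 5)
Your proposal is correct and takes essentially the same route as the paper: the paper's proof likewise identifies the counts of leaves $k+1,\ldots,n$ falling into each $x \in \insertablef[s]$ as a generalized P\'olya urn with the $(\alpha,\gamma)$-weights of $\tree[s]$ as initial weights, and then reads off the answer from Proposition~\ref{prop:polyaurn}\ref{prop:polyaurn_conditioning} as the colour of the first ball drawn. The only difference is that you spell out the weight-accounting and the identification $\tilde{y}_x = N_x^{n-k}$, which the paper leaves implicit.
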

\begin{proof}
  Note how $T_n$ is constructed from ${T_k = \tree[s]}$ in the $(\alpha, \gamma)$-growth process, if we are solely interested in the number of leaves being inserted into each $x \in \insertablef[s]$.
  Then this is simply a generalized P\'{o}lya urn scheme with weights corresponding to the weight associated with $x$ in $\tree[s]$.
  In this analogy we are interested in the colour of the first ball drawn from this urn scheme where we have a total of $n-k$ draws.
  According to Proposition~\ref{prop:polyaurn}~\ref{prop:polyaurn_conditioning} this is exactly $\frac{\tilde{y}_x}{n-k}$ for each $x \in \insertablef[s]$.
\end{proof}
The above proposition states that, if we follow an $(\alpha, \gamma)$-growth process and condition on a specific decorated tree, the probability that we originally inserted leaf $k+1$ in a specific location of the tree shape associated with the decorated tree, only depends on the total number of insertions we made into that part of the tree shape.
\section{Consistency in stationarity}
Having obtained an autonomous description of the decorated $(\alpha, \gamma)$-chain, we now turn our focus to proving that the projection of the $(\alpha, \gamma)$-chain on $\Thatspace_{[n]}$ onto the space of decorated $[k]$-trees of mass $n$, ${\left( \hat{\pi}_{[k]}^{\bullet n} \left( \That_n (m) \right) \right)}_{m \in \N_0}$, is indeed the decorated $(\alpha, \gamma)$-chain on $\T_{[k]}^{\bullet n}$.
For brevity we have used the notation that $\hat{\pi}_{[k]}^{\bullet n} := \pi_{[k]}^{\bullet n} \circ \hat{\pi}$.
As noted in~\cite{RefWorks:doc:5b4cbc93e4b07f5746e47014}, this is an instance of a more general question:
under what conditions is a surjective function of a Markov chain Markovian in its on right?
To be more precise, let ${\left( X_m \right)}_{m \in \N_0}$ be a Markov chain on some finite state space $\XX$, and let $\lambda \colon \XX \to \YY$ be a function.
Then the question is, if $Y_m = \lambda \left( X_m \right)$, $m \in \N_0$, is a Markov chain.
We will use the following two well-known criteria.
\begin{prop}%
  \label{prop:intertwining_kemenysnell}
  Let ${\left( X_m \right)}_{m \in \N_0}$ be a Markov chain on a finite state space $\XX$ with transition kernel $K$.
  Assume that $\lambda \colon \XX \to \YY$ is a surjective function.
  \begin{enumerate}
    \item\label{prop:kemenysnell_criterion} If for each $x_1, x_2 \in \XX$ such that $\lambda(x_1) = \lambda(x_2)$ it holds that
      \begin{align}
        K \left( x_1, \lambda^{-1}(\{y\}) \right)
        =
        K \left( x_2, \lambda^{-1}(\{y\}) \right)
        \qquad \text{for any $y \in \YY$},
        \label{eq:kemeneysnell}
      \end{align}
      define the transition kernel, $Q$, by $Q(y_1, \{ y_2 \}) = K(x, \lambda^{-1}(\{y_2\}))$ for any $x \in \lambda^{-1}(\{y_1\})$.
    \item\label{prop:intertwining_criterion} Let $q$ be a stationary distribution of ${\left( X_m \right)}_{m \in \N_0}$, and further assume that $X_0 \sim \Lambda(Y_0, \cdot)$ for some $\YY$-valued random variable $Y_0$, where $\Lambda(y, \cdot) = q \left( \cdot \ \middle \vert \ \lambda = y \right)$.
      If for each $y \in \YY$ it holds that
      \begin{align}
        \P \left( X_1 = x \ \middle \vert \ \lambda(X_0) = y_0, \lambda(X_1) = y \right)
        =
        \Lambda \left( y, \{x\} \right)
        \label{eq:intertwiningcriterion}
      \end{align}
      for all $x \in \XX$ with $\lambda(x) = y$, define $Q = \Lambda K \lambda$ using the same notation as in~\eqref{eq:nonplanartransitionmatrix}.
  \end{enumerate}
  In either case ${\left( \lambda(X_m) \right)}_{m \in \N_0}$ is a Markov chain with transition kernel, $Q$, and initial distribution given by $\lambda(X_0)$.
\end{prop}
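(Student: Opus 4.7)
The plan is to treat the two criteria separately, as each admits a short direct argument once the right conditioning is set up.

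\textbf{Kemeny--Snell (part~\ref{prop:kemenysnell_criterion}).} I would verify the Markov property for ${\left( Y_m \right)}_{m \in \N_0} := {\left( \lambda(X_m) \right)}_{m \in \N_0}$ by checking, for arbitrary $m$ and arbitrary history $y_0, \ldots, y_m, y \in \YY$ of positive probability, that
\[
  \P \left( Y_{m+1} = y \ \middle \vert \ Y_0 = y_0, \ldots, Y_m = y_m \right) = Q(y_m, \{y\}).
\]
Conditioning on $X_m$ and using the tower property gives this probability as a convex combination over $x \in \lambda^{-1}(\{y_m\})$ of $\P(X_{m+1} \in \lambda^{-1}(\{y\}) \mid X_m = x) = K(x, \lambda^{-1}(\{y\}))$. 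By hypothesis~\eqref{eq:kemeneysnell} this quantity does not depend on $x$, so the convex combination collapses to $K(x, \lambda^{-1}(\{y\})) = Q(y_m, \{y\})$ for any $x \in \lambda^{-1}(\{y_m\})$, as required.

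\textbf{Intertwining (part~\ref{prop:intertwining_criterion}).} Here the main obstacle is to promote the one-step hypothesis~\eqref{eq:intertwiningcriterion} to a full conditional statement along the history of $Y$. Concretely, I would prove by induction on $m$ that
\[
  \P \left( X_m = x \ \middle \vert \ Y_0 = y_0, \ldots, Y_m = y_m \right) = \Lambda(y_m, \{x\})
  \qquad \text{for all } x \in \lambda^{-1}(\{y_m\}).
\]
The base case $m=0$ is the assumption $X_0 \sim \Lambda(Y_0, \cdot)$. For the induction step, I would condition on $X_{m-1}$, use the Markov property of $(X_m)$ together with the inductive hypothesis to write the joint distribution of $(X_{m-1}, X_m, Y_0, \ldots, Y_m)$ in terms of $\Lambda$ and $K$, and then apply~\eqref{eq:intertwiningcriterion} at the last time step.

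Once this is established, the Markov property of ${\left( Y_m \right)}_{m \in \N_0}$ with kernel $\Lambda K \lambda$ follows from a single computation: conditioning on $X_m$ and using the above displayed identity,
\[
  \P \left( Y_{m+1} = y \ \middle \vert \ Y_0 = y_0, \ldots, Y_m = y_m \right)
  = \sum_{x \in \lambda^{-1}(\{y_m\})} \Lambda(y_m, \{x\}) K(x, \lambda^{-1}(\{y\}))
  = (\Lambda K \lambda)(y_m, \{y\}),
\]
where the last equality uses the matrix interpretation of $\lambda$ as $(1_{\lambda(x) = y})_{x \in \XX, y \in \YY}$. The hardest part is the inductive step verifying the conditional distribution of $X_m$; once that is in place, everything else is bookkeeping. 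The assumption that $q$ is stationary enters through the fact that $\Lambda(y, \cdot)$ is well-defined as $q(\cdot \mid \lambda = y)$, which in turn makes~\eqref{eq:intertwiningcriterion} meaningful.
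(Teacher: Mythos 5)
The paper does not prove this proposition at all: it states both criteria and cites them to the literature (Kemeny--Snell for part (i), Rogers--Pitman for part (ii)), so any proof you give is necessarily a ``different route''. Your argument is correct and is essentially the standard proof in both cases. For part (i) the only point you gloss over is that $\P\left(X_{m+1}\in\lambda^{-1}(\{y\})\mid X_m=x,\,Y_0=y_0,\dots,Y_{m-1}=y_{m-1}\right)=K\left(x,\lambda^{-1}(\{y\})\right)$, which holds because the $Y_j$ are deterministic functions of the $X_j$ and so the extra conditioning is absorbed by the Markov property of $X$; with that, the convex combination collapses exactly as you say. For part (ii) your induction is the right engine: the inductive hypothesis plus the Markov property give $\P\left(X_m=x\mid Y_0=y_0,\dots,Y_m=y_m\right)=(\Lambda K)(y_{m-1},\{x\})/(\Lambda K)\left(y_{m-1},\lambda^{-1}(\{y_m\})\right)$, and \eqref{eq:intertwiningcriterion} (read with $y_{m-1}$ in the role of $y_0$, which is legitimate since conditioning on $\lambda(X_0)=y_0$ reduces it to a statement about the chain started from $\Lambda(y_0,\cdot)$, and one needs it for every attainable $y_0$) identifies this ratio with $\Lambda(y_m,\{x\})$; the final display then yields the kernel $\Lambda K\lambda$. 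One small correction of emphasis: stationarity of $q$ is not what makes the proof work --- any $q$ charging every fibre $\lambda^{-1}(\{y\})$ would define a usable $\Lambda$ --- it is rather what makes \eqref{eq:intertwiningcriterion} verifiable in the paper's applications, where $\Lambda$ is the conditional law of the growth process given its projection.
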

In the above,~\ref{prop:kemenysnell_criterion} is referred to as the \textit{Kemeny-Snell criterion}~\cite{MR0115196}, whilst~\ref{prop:intertwining_criterion} is known as the \textit{intertwining criterion}~\cite{MR624684}.
The goal for this section is to combine the two criteria to prove that ${\left( \pi_{[k]}^{\bullet n} \left( T_n(m) \right) \right)}_{m \in \N_0}$ is a Markov chain with transition kernel described in Proposition~\ref{prop:decorated_transition_kernel}.
Illustrated in Figure~\ref{fig:commutativediagram_complete}, we seek to do this by splitting this up, so rather than projecting all the way to decorated trees we will use collapsed trees as an intermediary.
In doing this we note that $\pi_{[k]}^{\bullet n} = h \circ \pi_{[k]}^{* n}$, where $h$ denotes the natural projection from $\T_{[k]}^{* n}$ to $\T_{[k]}^{\bullet n}$, i.e.\ $\tree^* = \left( \tree[s], {\left( B_x \right)}_{x \in \insertablef[s]} \right) \stackrel{h}{\mapsto} \left( \tree[s], {\left( \# B_x \right)}_{x \in \insertablef[s]} \right) = \tree^\bullet$.

We therefore construct an auxiliary Markov chain on $\T_{[k]}^{* n}$ analogously to the way we constructed the decorated $(\alpha, \gamma)$-chain on $\T_{[k]}^{\bullet n}$ in Definition~\ref{def:decorated_alphagamma_chain}.
Define a Markov kernel from $\T_{[k]}^{* n}$ to $\Thatspace_{[n]}$ by
\begin{align}
  \forall \tree^* \in \T_{[k]}^{* n}, \ \that \in \Thatspace_{[n]} \colon \quad
  {\hat{\Pi}}_k^{* n} (\tree^*, \{\that\}) :=
  \P \left( \That_n = \that \ \middle \vert \ \hat{\pi}_{[k]}^{* n} \left(\That_n\right) = \tree^* \right),
  \label{eq:collapsedkernel}
\end{align}
where $\That_n$ is the $n$th step of the semi-planar $(\alpha, \gamma)$-growth process, and $\hat{\pi}_{[k]}^{* n} := \pi_{[k]}^{* n} \circ \hat{\pi}$ denotes the  projection from semi-planar $n$-trees to collapsed $[k]$-trees of mass $n$, for brevity.
Now define the \textit{collapsed $(\alpha, \gamma)$-chain on $\T_{[k]}^{* n}$} be a Markov chain on $\T_{[k]}^{* n}$ with transition kernel 
\begin{align}
  K_k^{* n} := {\hat{\Pi}}_k^{* n} \hat{K}_n \hat{\pi}_{[k]}^{* n},
  \label{eq:transitionkernel_collapsed}
\end{align}
Lastly, we define a Markov kernel from $\T_{[k]}^{\bullet n}$ to $\T_{[k]}^{* n}$ by
%
\begin{align}
  \forall \tree^\bullet \in \T_{[k]}^{* n}, \ \tree^* \in \T_{[k]}^{* n} \colon \quad
  H (\tree^*, \{\that\}) :=
  \P \left( \pi_{[k]}^{* n} \left( \That_n \right) = \tree^* \ \middle \vert \ \hat{\pi}_{[k]}^{\bullet n} \left(\That_n\right) = \tree^\bullet \right).
  \label{eq:kernel_collapsed_decorated}
\end{align}
The goal is thus to show that both the upper and lower part of the diagram in Figure~\ref{fig:commutativediagram_complete} commutes, respectively.
We will deploy the Kemeny-Snell criterion to show that the lower diagram commutes, whilst the intertwining criterion will be used for the upper part of the diagram.
\begin{figure}[t]
  \centering
  \begin{tikzcd}[row sep = large, column sep = large, shape = asymmetrical rectangle]
    \That_n(m) \arrow[r, "\hat{K}_n"] & \That_n(m+1) \arrow[d, bend right, "\hat{\pi}_{[k]}^{* n}"'] \arrow[dd, bend left = 65, "\hat{\pi}_{[k]}^{\bullet n}"]\\
    T_{k}^{* n}(m) \arrow[u, "\hat{\Pi}_k^{* n}"] \arrow[r, red, "K_k^{* n}" red] & T_{k}^{* n}(m+1) \arrow[d, bend right, "h"'] \arrow[u, bend right, "\hat{\Pi}_k^{* n}"'] \\ 
    T_{k}^{\bullet n}(m) \arrow[u, "H"] \arrow[uu, bend left = 60, "\hat{\Pi}_{k}^{\bullet n}"] \arrow[r, red, "K_k^{\bullet n}" red] & T_{k}^{\bullet n}(m+1) \arrow[u, bend right, "H"']
  \end{tikzcd}
  \caption{Commutative diagram illustrating the construction of $K_k^{\bullet n}$ described in Proposition~\ref{prop:decorated_transition_kernel} and the proof tactic to show that the projection of the semi-planar $(\alpha, \gamma)$-chain is indeed a Markov chain with that same transition kernel.}
  \label{fig:commutativediagram_complete}
\end{figure}
We will start by proving that the projection from collapsed trees to decorated trees satisfies the Kemeny-Snell criterion.
\begin{lemma}\label{lemma:kemeny-snell}
  Let ${\left( T_n(m) \right)}_{m \in \N_0}$ denote the $(\alpha, \gamma)$-chain.
  Then 
  \begin{align*}
    &\P \left( \pi_{[k]}^{* n} \left( T_n(1) \right) \in h^{-1}( \{ \tree^\bullet \}) \ \middle \vert \ \pi_{[k]}^{* n} \left( T_n(0) \right) = \tree_1^* \right) \nonumber \\
    = \quad &\P \left( \pi_{[k]}^{* n} \left( T_n(1) \right) \in h^{-1}( \{ \tree^\bullet \}) \ \middle \vert \ \pi_{[k]}^{* n} \left( T_n(0) \right) = \tree_2^* \right)
  \end{align*}
  for all $\tree^\bullet \in \T_{[k]}^{\bullet n}$, whenever $h (\tree_1^*) = h (\tree_2^*)$.
\end{lemma}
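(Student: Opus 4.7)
The plan is to verify the Kemeny--Snell condition directly. Writing $\tree_i^* = (\tree[s], (B_x^i))$ for $i=1,2$, the two starting points agree on the tree shape $\tree[s]$ and on all the block sizes $y_x = |B_x^i|$, and differ only in how $\{k+1,\ldots,n\}$ is partitioned into the blocks $B_x$. By the definition $K_k^{* n} = \hat{\Pi}_k^{* n} \hat{K}_n \hat{\pi}_{[k]}^{* n}$, the desired equality amounts to showing that the quantity $\mathbb{E}\bigl[ \hat{K}_n\bigl(\That,\, \bigl(\hat{\pi}_{[k]}^{\bullet n}\bigr)^{-1}(\{\tree^\bullet\})\bigr) \bigr]$, with $\That \sim \hat{\Pi}_k^{* n}(\tree^*, \cdot)$, depends on $\tree^*$ only through $\tree^\bullet = h(\tree^*)$.

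The central input is the spatial Markov property (Lemma~\ref{lemma:InternalStructure}): under $\hat{\Pi}_k^{* n}(\tree^*, \cdot)$, the internal structures $(\intstruct(x))_{x \in \insertablef[s]}$ are conditionally independent, and each has a law which, after relabelling its leaves to $[|B_x|]$ via the increasing bijection, is determined entirely by $y_x$. Consequently, the law of the lifted $\That$ factors across the insertable parts of $\tree[s]$ in a way whose dependence on $\tree^*$ is purely in the labels inside each internal structure---information that is lost upon applying $\hat{\pi}_{[k]}^{\bullet n}$. Moreover, the uniform down-step selection $I \sim \Unif([n])$ decomposes as ``choose $x \in \insertablef[s]$ with probability $y_x/n$, then choose $I$ uniformly within $B_x$''.

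Given this factorisation, I would rerun the case analysis from the proof of Proposition~\ref{prop:decorated_transition_kernel}. In the non-boundary cases, where $I$ is strictly interior to the internal structure at $x$, Corollaries~\ref{cor:internal_downupchain_stationary_lemma_theorem} and~\ref{cor:korder_downupchain_stationary_lemma_theorem} imply that swap--delete--relabel returns a stationary internal structure of size $y_x-1$, leaving $\tree[s]$ unchanged and decrementing $y_x$ by $1$. In the boundary cases the tree shape may change, but the ensuing redistribution of masses is controlled by the $\betabin$ and decrement-matrix laws appearing in Proposition~\ref{prop:decorated_transition_kernel}, all of which read off only the sizes $(y_x)$; the subsequent up-step, via Definition~\ref{def:decorated_alphagamma_growth}, is again a function of $(y_x)$ alone. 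The main obstacle is the honest verification of the hardest boundary case, where an external edge and the adjacent binary branch-point mass both vanish and one must invoke the resampling mechanism of Definition~\ref{def:resample}---but that verification is in essence already the argument proving Proposition~\ref{prop:decorated_transition_kernel}, so the present lemma follows by rerunning that same calculation starting from the conditional law on collapsed rather than decorated trees.
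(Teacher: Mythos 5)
Your overall strategy is a legitimate one in principle (verify the Kemeny--Snell identity by computing $K_k^{*n}(\tree^*, h^{-1}(\{\tree^\bullet\}))$ and showing it depends on $\tree^*$ only through $h(\tree^*)$), and your treatment of the non-boundary cases is sound: there the selected and deleted leaves lie in the same block, the spatial Markov property localises everything to a single internal structure, and the relevant laws genuinely depend only on the block sizes. But the proposal has a genuine gap exactly where you flag ``the main obstacle'' and then wave it away. In the shape-changing cases (Cases B2 and B4 of Proposition~\ref{prop:decorated_transition_kernel}), a leaf $\tildei \leq k$ is deleted and the labels are shifted down, so the leaf originally labelled $k+1$ becomes the new leaf $k$ of the projected $[k]$-tree; the new tree shape is $\varphi(\cdot, x_0, k)$ where $x_0$ is the insertable part whose block contains $k+1$. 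Conditionally on the \emph{decorated} tree, the location of $k+1$ is random with law $\tilde{y}_x/(n-k)$ (by exchangeability of the underlying urn, Proposition~\ref{prop:polyaurn} and Lemma~\ref{lemma:locationof_k_plus1}), and this is what the resampling mechanism of Definition~\ref{def:resample} encodes. Conditionally on the \emph{collapsed} tree, however, the block containing $k+1$ is deterministic, and two collapsed trees $\tree_1^*, \tree_2^*$ with $h(\tree_1^*) = h(\tree_2^*)$ may place $k+1$ in different blocks. So your assertion that the boundary-case redistribution ``reads off only the sizes $(y_x)$'' is precisely the claim that needs proof, and it cannot be obtained by ``rerunning'' the proof of Proposition~\ref{prop:decorated_transition_kernel}, because that proof is carried out under the decorated conditioning where $k+1$'s location is resampled; under the collapsed conditioning it is not. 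This is not a presentational issue: it is the one place where the collapsed tree carries strictly more information than the decorated tree in a way that feeds directly into the transition.

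For contrast, the paper's proof is not a kernel computation at all but a short symmetry argument: since $h(\tree_1^*) = h(\tree_2^*)$, there is a unique bijection $\tau$ of $[n]$, increasing on each block, with $\tau(B_x^1) = B_x^2$ for every $x \in \insertablef[s]$; one checks that $\tau$ maps the fibre $(\pi_{[k]}^{*n})^{-1}(\tree_1^*)$ onto $(\pi_{[k]}^{*n})^{-1}(\tree_2^*)$ preserving the growth-process probabilities (exchangeability of the urn plus the spatial Markov property), and then argues that the one-step dynamics commute with $\tau$ at the level of the decorated projection, because the chain only ever interrogates ranks within blocks and block membership, both of which $\tau$ preserves. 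That argument is far shorter than what you propose, and it buys a coupling rather than a computation. But note that it must confront the very same point you skipped: $\tau$ need not fix the label $k+1$, so the claim that the decorated projections after one step agree is delicate precisely in Cases B2/B4, where the identity of the block containing $k+1$ determines the new tree shape. Any complete write-up, whether along your route or the paper's, has to say explicitly why the contributions of the shape-changing events match up across the fibre of $h$; simply citing the decorated-kernel computation does not do this.
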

\begin{proof}
  Say that $\tree_1^* = \left( {\tree[s]_1}, {\left( B_x^1 \right)}_{x \in \insertable \left({\tree[s]}_1\right)} \right)$ and $\tree_2^* = \left( {\tree[s]_2}, {\left( B_x^2 \right)}_{x \in \insertable \left({\tree[s]}_2\right)} \right)$.
  As $h (\tree_1^*) = h (\tree_2^*)$ we easily see that ${\tree[s]}_1 = {\tree[s]}_2 =: \tree[s]$ and $\# B_x^1 = \# B_x^2$, ensuring that $\P \left( I \in B_x^1 \right) = \P \left( I \in B_x^2 \right)$, for every $x \in \insertablef[s]$.

Now observe that there is a unique bijection $\tau \colon [n] \to [n]$ which is increasing on $B_x^1$ for each $x \in \insertable \left( {\tree[s]}_1 \right)$ such that $B_x^2 = \tau \left( B_x^1 \right)$ for every $x \in \insertable \left( \tree[s] \right)$, and use this bijection to obtain $\tau(\tree)$ from $\tree \in \T_{[n]}$.
  Then it holds that
  \begin{align*}
    \left\{ \tau (\tree) \ \middle \vert \ \pi_{[k]}^{* n} \left( \tree \right) = \tree_1^* \right\}
    = {\left( \pi_{[k]}^{* n} \right)}^{-1} \left( \tree_2^* \right)
  \end{align*}
  and that $\P \left( T_n = \tree \right) = \P \left( T_n = \tau (\tree) \right)$ for each $\tree \in \T_{[n]}$ that satisfies $\pi_{[k]}^{* n} (\tree) = \tree_1^*$.
  Crucially, the rank of each element of any set $B_x$ is preserved under this relabelling.
  Note how the internal structure, $\intstruct (x)$, is completely preserved under $\tau$ by Lemma~\ref{lemma:spatialMarkovProperty}~\ref{lemma:spatial2}, for each $x \in \insertable \left( \tree[s] \right)$, and that the distribution of each $\intstruct (x)$ is described by Lemma~\ref{lemma:spatialMarkovProperty}\ref{lemma:spatial3}-\ref{lemma:spatial5}.
  This immediately implies that $\pi_{[k]}^{\bullet n} \left( T_n(1) \right) = \pi_{[k]}^{\bullet n} \left( T_n^\tau(1) \right)$ (not just in distribution!), where $T_n(\cdot)$ and $T_n^\tau(\cdot)$ are both (non-planar) $(\alpha, \gamma)$-chains but $T_n(0) = \tree$ and $T_n^\tau(0) = \tau (\tree)$.
  This proves that the Kemeny-Snell criterion is satisfied.
\end{proof}
With the bottom diagram of Figure~\ref{fig:commutativediagram_complete} commuting, we only need to prove that the upper part of the same diagram commutes as well.
This is a slightly more intricate argument, where we will aim to show that the intertwining criterion is satisfied with ${\left( X_m \right)}_{m \in \N_0} = {\left( \That_n(m) \right)}_{m \in \N_0}$ and $\lambda = \hat{\pi}_{[k]}^{*n} = \pi_{[k]}^{*n} \circ \hat{\pi} \colon \Thatspace \to \T_{[k]}^{*n}$.
\begin{lemma}\label{lemma:intertwining}
  Let ${\left( \That_n(m) \right)}_{m \in \N_0}$ be the semi-planar $(\alpha, \gamma)$-chain with $\That_n(0) \sim \hat{\Pi}_{k}^{* n} (\tree_0^*, \cdot)$ for some $\tree^{* n} \in \T_{[k]}^{* n}$.
  Then 
  \begin{align*}
    \P \left( \That_n(1) = \that \ \middle \vert \ \hat{\pi}_{[k]}^{* n} \left( \That_n(0) \right) = \tree_0^*, \hat{\pi}_{[k]}^{* n} \left( \That_n(1) \right) = \tree_1^* \right)
    = \hat{\Pi}_k^{* n} (\tree_1^*, \{ \that \})
  \end{align*}
  for all $\that \in \Thatspace_{[n]}$, whenever $\tree_0^*, \tree_1^* \in \T_{[k]}^{*n}$ are such that
\begin{align*}
  \P \left( \hat{\pi}_{[k]}^{* n} \left( \That_n(0) \right) = \tree_0^*, \hat{\pi}_{[k]}^{* n} \left( \That_n(1) \right) = \tree_1^* \right) > 0.
\end{align*}
\end{lemma}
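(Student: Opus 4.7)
The plan is to verify the intertwining condition by leveraging the spatial Markov property (Lemma~\ref{lemma:InternalStructure}), which gives an alternative description of the kernel $\hat{\Pi}_k^{*n}(\tree^*,\cdot)$: sample the internal structures $(\intstruct(x))_{x\in\insertable(\tree[s])}$ independently, with $\intstruct(x)$ distributed as the appropriate step of the $(\alpha,\gamma)$-, internal $(\alpha,\gamma)$-, or $c$-order branch point $(\alpha,\gamma)$-growth process depending on whether $x$ is an external edge, internal edge, or branch point. The intertwining claim thus reduces to showing that, conditional on $\hat{\pi}_{[k]}^{*n}(\That_n(0))=\tree_0^*$ and $\hat{\pi}_{[k]}^{*n}(\That_n(1))=\tree_1^*$, the internal structures of $\That_n(1)$ at the insertable parts of the shape $\tree[s]_1$ are conditionally independent with exactly those marginal laws, depending only on $\tree_1^*$ and not on $\tree_0^*$.

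To carry this out I would decompose the transition into the down-step and the up-step separately, noting that the up-step is governed by the semi-planar $(\alpha,\gamma)$-growth rule which, by Lemma~\ref{lemma:InternalStructure} combined with an inductive one-step argument, preserves the product-of-independent-internal-structures form. The down-step requires a case analysis driven by Proposition~\ref{prop:decorated_transition_kernel}. In the generic cases where the selected leaf $I$ satisfies $I > k$ (equivalently, $I$ lies in some $B_x$ without being one of the smallest labels defining $\tree[s]_0$), the swap-and-delete operation modifies only the single internal structure at $x$; here I would apply Corollaries~\ref{cor:internal_downupchain_stationary_lemma_theorem} and~\ref{cor:korder_downupchain_stationary_lemma_theorem} to conclude that the post-down-step internal structure at $x$ is, conditional on its new size, again distributed according to the relevant growth process, while all other internal structures are left untouched and hence retain their conditional independence.

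The delicate case is when the deleted label $\tildei \leq k$, so that a leaf edge of $\tree[s]_0$ disappears, the shape changes, and a re-labelling is required. Here I would follow the resampling argument in Cases B2 and B4 of the proof of Proposition~\ref{prop:decorated_transition_kernel}: use the urn-type distribution of which insertable part of the reduced shape receives label $k$ (Lemma~\ref{lemma:locationof_k_plus1}) together with the spatial Markov property to show that, after swap, deletion, and re-labelling, the internal structures of the resulting collapsed tree of mass $n-1$ are again independent with the prescribed laws. Composing with the up-step, and observing that conditioning further on $\hat{\pi}_{[k]}^{*n}(\That_n(1))=\tree_1^*$ only constrains the total sizes of the internal structures (not their internal configurations), then yields the desired factorization of the conditional distribution of $\That_n(1)$ as $\hat{\Pi}_k^{*n}(\tree_1^*,\cdot)$.

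The main obstacle will be the bookkeeping in this last case: tracking how the swap $\hat{\tau}^*$, the deletion $\hat{\rho}^*$, the bijective re-labelling, and the subsequent re-insertion of leaf $n$ interact with the partition $(B_x)_{x\in\insertable(\tree[s]_0)}$ and its counterpart for $\tree[s]_1$. The clean way through is to condition jointly on the identities of $I$ and $\tildei$ (not merely their ranks), exploit exchangeability within each $B_x$ coming from the urn interpretation of the spatial Markov property, and check that the induced conditional distribution of each new internal structure depends only on data available from $\tree_1^*$. Once this factorization is established, Proposition~\ref{prop:intertwining_kemenysnell}\ref{prop:intertwining_criterion} completes the argument.
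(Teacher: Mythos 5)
Your overall strategy coincides with the paper's: reduce the intertwining condition to a statement about the down-step, use the spatial Markov property to describe $\hat{\Pi}_k^{*n}$ as a product of conditionally independent internal structures, and verify case by case that conditioning on $I$ and $\tilde{I}$ (and, where needed, on additional label-set information) returns internal structures with the laws of the appropriate growth processes, settling the generic cases via Corollaries~\ref{cor:internal_downupchain_stationary_lemma_theorem} and~\ref{cor:korder_downupchain_stationary_lemma_theorem} and the shape-changing case via the urn/resampling argument of Lemma~\ref{lemma:locationof_k_plus1}.

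There is, however, a genuine gap: your two cases do not exhaust the sample space. You treat (a) $I>k$, where you assert that only one internal structure is modified, and (b) $\tilde{I}\le k$, where the shape changes. Since $I\le\tilde{I}$, the event $\{I\le k<\tilde{I}\}$ is covered by neither. This is precisely the situation where the selected leaf $i$ is one of the $k$ labels spanning $\tree[s]_0$ (so $i$ sits on an external edge $x$ with $B_x=\{i\}$), but the local search finds $\tilde{I}=\tildei>k$ in the parent branch point $v$ or in the second spinal bush. The tree shape is preserved, yet the claim that a single internal structure is affected fails here: the swap-and-delete detaches the entire subtree containing $\tildei$ from $\intstruct(v)$ (respectively from $\intstruct(e_{v\downarrow})$ in the binary subcase) and installs it, relabelled, as the new internal structure of the leaf edge $i$. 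One must therefore condition additionally on the label set $C_{\tildei}$ of that subtree and check two things at once: that the relabelled subtree is distributed as $\That_{\# C_{\tildei}}$, and that what remains of the internal structure at $v$ (or at $e_{v\downarrow}$) is still distributed as the corresponding $c$-order branch point (or internal) $(\alpha,\gamma)$-growth process on the reduced label set, with conditional independence intact. This is Cases B1 and B3 of the paper's proof, and it does not follow from the corollaries you invoke for $I>k$, since those concern swaps and deletions taking place entirely within one internal structure. A secondary, minor imprecision: for collapsed trees, conditioning on $\hat{\pi}_{[k]}^{*n}(\That_n(1))=\tree_1^*$ fixes the label sets $(B_x)$ themselves, not merely their cardinalities; the factorization argument still goes through, but the exchangeability you appeal to should be phrased at the level of ranks within each $B_x$.
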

\noindent
Essentially, the following proof will follow the structure of Proposition~\ref{prop:decorated_transition_kernel}, meaning that we will split up the proof into cases based on the location and value of $I$ in $\hat{\pi}_{[k]}^{*n} \left( \That_n \right)$:
\begin{proof}
  Consider the collapsed tree $\tree_0^* = \left( \tree[s], {\left( B_x \right)}_{x \in \insertable (\tree[s])} \right) \in \T_{[k]}^{*n}$, the event that the projection of the inital state of the semi-planar $(\alpha, \gamma)$-chain onto $\T_{[k]}^{* n}$ yields $\tree_0^*$, $\left\{\hat{\pi}_{[k]}^{* n} \left( \That_n(0) \right) = \tree_0^* \right\}$, and the internal structure as characterized by the spatial Markov property in Lemma~\ref{lemma:spatialMarkovProperty}.
  In light of this result it is sufficient to show that the internal structure is preserved under the down-step if we further condition on the selected leaf $I$, the deleted leaf $\tilde{I}$, and in some cases additional information about label sets, ensuring that we uniquely characterize the collapsed tree after the down-step.
  Hence, we 
  \begin{enumerate}
    \item sample $\That$ from $\P \left( \That_n \in \cdot \ \middle \vert \ \hat{\pi}_{[k]}^{* n} \left( \That_n \right) = \tree_0^* \right)$,
    \item condition on $I = i$ and $\tilde{I} = \tildei$ (and in some cases additional information), delete $\tildei$, and relabel $\tildei + 1, \ldots, n$ by $\tildei, \ldots, n-1$ using the increasing bijection, obtaining $\That^\downarrow$, and
    \item project to $\T_{[k]}^{*(n-1)}$ to obtain $T^*$, and study the internal structures in $\That^\downarrow$.
  \end{enumerate}
  Even though the statement of the lemma formally involves the up-step, we will as an intermediary result show that
  \begin{align}
    &\P \left( \That_n^\downarrow = \that \ \middle \vert \ \hat{\pi}_{[k]}^{* n} \left( \That_n(0) \right) = \tree_0^*, \hat{\pi}_{[k]}^{* (n-1)} \left( \That^\downarrow \right) = \tree_1^{*\downarrow} \right) \nonumber \\
    = \quad &\P \left( \That_{n-1} = \that \ \middle \vert \ \hat{\pi}_{[k]}^{* (n-1)} \left( \That^\downarrow \right) = \tree_1^{*\downarrow} \right)
    \label{eq:intertwining_downsteppart_statement}
  \end{align}
  where $\tree_1^{* \downarrow}$ denotes $\tree_1^*$ with label $n$ removed.
  By definition of the down-up chain the statement of the lemma easily follows from~\eqref{eq:intertwining_downsteppart_statement}, since the up-step is simply defined by adding $\{ n \}$ to one of the $B_x$'s according to the same probabilities as in the decorated $(\alpha, \gamma)$-growth process, Definition~\ref{def:decorated_alphagamma_growth}.
  Now, from the spatial Markov Property (Lemma~\ref{lemma:spatialMarkovProperty}) we need mainly to focus on the internal structures of $\tree_0^*$ in $\That$ that are affected by the down-step.
  
  We now split up into cases based on $\tree_0^*$, $i$ and $\tildei$: \\

  \textbf{Case A1: $i \in B_x$ for external $x \in \edge (\tree[s])$, $y_x > 1$.}
  Note how in this case $I \in B_x$ implies that $\tilde{I} = \max \{ I, a, b \} \in B_x$ as well, where $a$ and $b$ are defined as in the semi-planar $(\alpha, \gamma)$-chain.
  This implies that the tree shape does not change during the down-step.  
  Hence conditioning on $I = i$ and $\tilde{I} = \tildei$ in $\tree_0^*$, is exactly the same as conditioning on the corresponding random variables $I' = \rank_{B_x}(i)$ and $\tilde{I}' = \rank_{B_x}(\tildei)$ in $\intstruct (x)$.
  Now, recall that {$\intstruct (x) \deq \That_{\# B_x}$}, and so Lemma~\ref{lemma:condind} ensures that, under this conditioning $\intstruct (x)$ after the down move is distributed as $\That_{\# B_x - 1}$.
  Since this is the only internal structure affected in the down-step all the other internal structures will still have the same distribution, and they will all still be conditionally independent as in Lemma~\ref{lemma:spatialMarkovProperty}.

  \textbf{Case A2: $i \in B_x$ for internal $x \in \edge (\tree[s])$.}
  This is very similar to Case A1, except that we have $I > k$ in this case.
  As above, note that conditioning on $I = i$ and $\tilde{I} = \tildei$, both being elements of $B_x$, is equivalent to conditioning on the corresponding random variables $I' = \rank_{B_x}(i)$ and $\tilde{I}' = \rank_{B_x}(\tildei)$ in $\intstruct (x)$.
  Note that $1 < I' \leq \tilde{I}' \leq \# B_x$, and recall that $\intstruct (x) \deq \That_{\# B_x}^\gamma$.
  Combining these observations with Corollary~\ref{cor:internal_downupchain_stationary_lemma_theorem} yields that $\intstruct (x)$ after the down-step has the same distribution as $\That_{\# B_x - 1}^\gamma$.

  \textbf{Case A3: $i \in B_x$ for $x \in \branchpoints (\tree[s])$.}
  This is the most complicated scenario due to the complexity of the internal structure of a branch point.
  But similarly to Cases A1 and A2, we do not change the tree shape since $I > k$ ensures that $\tilde{I} \in B_x$ as well.
  Hence we only need to argue that this internal structure has the correct (conditional) distribution after the down step, as in the previous cases.
  Similar to these cases, conditioning on $I = i$ and $\tilde{I} = \tildei$ in $\that_0^*$ is equivalent to conditioning on the corresponding random variables $I' = c + \rank_{B_x}(i)$ and $\tilde{I}' = c + \rank_{B_x}(\tildei)$ in $\intstruct(x)$, where $c$ is the number of children of $x$ in $\tree[s]$.
  
  From Lemma~\ref{lemma:spatialMarkovProperty}\ref{lemma:spatial5} we obtain that $\intstruct(x)$ has the same distribution as the $\# B_x$'th step of a $c$-order branch point $(\alpha, \gamma)$-growth process started from \\ $\rho (\intstruct (x), [c + \# B_x] \setminus [c])$.
  Hence Corollary~\ref{cor:korder_downupchain_stationary_lemma_theorem} immediately yields the results. \\

  Since no other ranked internal structures are altered in the above situation, and since we have now also conditioned on $\tildei$, we can easily relabel all the other label sets $B_{x'}$ for $x' \neq x$ if applicable. \\

  In all of the following cases, $v$ refers to the parent branch point of $i$ in $\tree[s]$.

\textbf{Case B1: $i \in B_x$ for external} $x \in \edge (\tree[s]), y_x = 1, y_v > 0, \tildei > k$.
  Firstly, we note that the restrictions on $i$ and $y_x$ imply that $i \leq k$.
  Now consider the semi-planar structure in the branch point $v$, and note how $y_x = 1$ and $y_v > 0$ imply that $\max \{i, a, b\} = \max \{i, b\}$, where $a$ and $b$ are defined as in the semi-planar $(\alpha, \gamma)$-chain.
Hence this case covers the events where (1) $i$ is located in a multifurcating branch point of $\That$ and the local search finds a label larger than $k$ to the left (or right, if $i$ is placed leftmost in $v$) of $i$, and (2) $i$ is located in a bifurcating branch point where there is a decorated mass in the branch point.
  The two situations are completely analogous so we will only provide the argument for the multifurcating case.
  Here, two internal structures are affected: $\intstruct(x)$ and $\intstruct(v)$.
  So condition on $I = i$ and $\tilde{I} = \tildei \in B_v$.
  By Lemma~\ref{lemma:spatialMarkovProperty}, conditional on the subtree of $v$ in $\That$ containing $\tildei$ having leaves labelled by $C_{\tildei}$, we will first show that $\intstruct_k^{T^\downarrow}(x)$ has the distribution of $\That_{\# C_{\tildei}}$.
  Consider how $\intstruct(v)$ is changed.
  First, conditional on $I = i, \tilde{I} = \tildei$ and $C_{\tildei}$, we will insert the leaves labelled by the elements of $C_{\tildei} \setminus \{\tildei\}$ into the subtree of $v$ with lowest label $\tildei$, and no other leaves will go into this subtree, so that what we extract as $\intstruct (x)$ in $\That^\downarrow$ is indeed distributed like $\That_{\# C_{\tildei}}$.
  Then, to form $\intstruct(v)$ in $\That$, we would still insert the leaves labelled by the elements of $B_v \setminus C_{\tildei}$ according to the same growth procedure that formed $\intstruct(v)$ in $\That$ to form the equivalent in $\That^\downarrow$; the leaves cannot (in this case) be inserted into either of the subtrees with smallest label $i$ and $\tildei$, respectively, nor in the gap between them.
  This proves that both the relevant internal structures have the correct conditional distribution after the down-step, given $I = i, \tilde{I}$ and $C_{\tildei}$.
  
\textbf{Case B2: $i \in B_x$ for external} $x \in \edge (\tree[s]), y_x = 1, y_v > 0, \tildei \leq k$.
  Following the same reasoning as above, this case only contains the event where the local search in $\That$ does not find a label larger than $k$ in a multifurcating branch point to the left (or right) of $i$.
  Hence $\tildei \in [k]$, and by swapping $i$ and $\tildei$ and subsequently deleting $\tildei$, we end up removing one of the leaves that defines the tree shape $\tree[s]$.
  The leaves $\tildei + 1, \ldots, k$ will be relabelled by $\tildei, \ldots, k-1$ using the increasing bijection, thus forming $\tree[s]' = \rho \left( \tau ( \tree[s], i, \tildei), \tildei \right)$, and $k$ will reappear in the following way.
  \begin{enumerate}
    \item If $k+1 \in B_{v^\prime}$ for some $v^\prime \in \branchpoints (\tree[s])$, the new tree shape will have $k$ as a child of $v^\prime$, i.e.\ the new tree shape is $\varphi \left(\tree[s]' , v^\prime, k \right)$.
    \item If $k+1 \in B_{e^\prime}$ for some $e^\prime \in \edge (\tree[s])$, $k$ will split the edge up into a new branch point and three edges, i.e.\ the new tree shape is $\varphi \left(\tree[s]' , e^\prime, k \right)$.
  \end{enumerate}
  Consequently, we need to focus on three internal structures in $\That$: $\intstruct(e_{\tildei \downarrow})$, $\intstruct(v)$ (where $v$ is the parent of $i$ in $\That$), and $\intstruct (x')$ where $k+1 \in B_{x'}$.
  
  The first of these remains the same, but becomes the internal structure on the leaf edge $i$ after the down-step.

  From arguments similar to Proposition~\ref{prop:generalresult_downstep_stationary_lemma_theorem} we obtain that $\intstruct(v)$ in $\That^\downarrow$ has the same distribution as the $y_v$'th step of a semi-planar $c-1$-order branch point $(\alpha, \gamma)$-process.

  Now observe how the relabelling naturally partitions $B_{x'}$ into either two blocks (if $x'$ is a branch point) or four blocks (if $x'$ is an edge), and which of the two situations we are in is determined solely by $\tree_0^*$.
  So if $x' = e_{v^\prime w} \in \edge(\tree[s])$ condition on $B_{e_{k \downarrow}}^\prime, B_{w^\prime}^\prime, B_{e_{v^\prime w^\prime}}^\prime$, and $B_{e_{w^\prime w}}^\prime$ partitioning $B_{x^\prime}$.
  On the other hand, if $x^\prime \in \branchpoints(\tree[s])$ condition on $B_{e_{k \downarrow}}^\prime$ and $B_{x^\prime}^\prime$ partitioning $B_{x^\prime}$.
  Considering how insertions of leaves were made into $\rho \left( T_n, [n] \setminus [k+1] \right)$ to obtain $T_n$, we deploy Lemma~\ref{lemma:spatialMarkovProperty} which yields that the internal structures corresponding to the branch points and edges outlined above have the correct distributions.

\textbf{Case B3: $i \in B_x$ for external} $x \in \edge (\tree[s])$, $y_x = 1$, $y_v = 0$, $\tildei > k$.
  This case is where we have a binary branch point, and there is decorated mass on the edge below, but is otherwise similar to Case B1.
  
\textbf{Case B4: $i \in B_x$ for external} $x \in \edge (\tree[s])$, $y_x = 1$, $y_v = 0$, $\tildei \leq k$.
  This case is where we have a binary branch point, and there is no decorated mass on the edge below, but is otherwise similar to Case B2. \\

  Since the above cases are exhaustive, and that we in all cases obtain the correct conditional distributions, this proves~\eqref{eq:intertwining_downsteppart_statement}.
\end{proof}
With Lemma~\ref{lemma:kemeny-snell} and Lemma~\ref{lemma:intertwining} in place, we are now ready to prove the final theorem of this exposition, from which Corollary~\ref{cor:consistency_stationarity} also follows easily.
\begin{thm}\label{thm:decorated_chain_projection_theorem}
  Fix $n \geq 2$.
  Let ${( \That_n(m) )}_{m \in \N_0}$ denote a semi-planar $(\alpha, \gamma)$-chain.
  For $1 \leq k \leq n$, let $T_{[k]}^{\bullet n}(m) := \hat{\pi}_{[k]}^{\bullet n} \left( \That_n(m) \right)$, $m \in \N_0$, denote the projection of the chain onto decorated $[k]$-trees of mass $n$.
  If $\That_n(0) \sim \hat{\Pi}_k^{\bullet n} (\tree^\bullet, \cdot)$ for some $\tree^\bullet \in \T_{[k]}^{\bullet n}$, then ${\left( T_{[k]}^{\bullet n}( m ) \right)}_{m \in \N_0}$ is a decorated $(\alpha, \gamma)$-chain on $\T_{[k]}^{\bullet n}$ started from $\tree^\bullet$.
\end{thm}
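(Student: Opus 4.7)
The strategy is the standard two-step factorisation of a projection of a Markov chain, carried out along the commutative diagram of Figure~\ref{fig:commutativediagram_complete}: first push the semi-planar chain down to $\T_{[k]}^{*n}$ using the intertwining criterion (Proposition~\ref{prop:intertwining_kemenysnell}(ii)) together with Lemma~\ref{lemma:intertwining}, then push the resulting collapsed chain further down to $\T_{[k]}^{\bullet n}$ using the Kemeny-Snell criterion (Proposition~\ref{prop:intertwining_kemenysnell}(i)) together with Lemma~\ref{lemma:kemeny-snell}.

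First, I would decompose the initial law. Since $\hat{\pi}_{[k]}^{\bullet n}=h\circ\hat{\pi}_{[k]}^{*n}$ and all of $\hat{\Pi}_k^{\bullet n}$, $\hat{\Pi}_k^{*n}$ and $H$ are obtained by conditioning the distribution of $\That_n$, the $n$-th step of the semi-planar $(\alpha,\gamma)$-growth process, the consistency identity $\hat{\Pi}_k^{\bullet n}=H\,\hat{\Pi}_k^{*n}$ holds. Hence drawing $\That_n(0)\sim\hat{\Pi}_k^{\bullet n}(\tree^\bullet,\cdot)$ is equivalent to first drawing $T^*\sim H(\tree^\bullet,\cdot)$ and then, conditionally on $T^*=\tree^*$, drawing $\That_n(0)\sim\hat{\Pi}_k^{*n}(\tree^*,\cdot)$. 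Working throughout under this conditioning, Proposition~\ref{prop:intertwining_kemenysnell}(ii) applied with $\lambda=\hat{\pi}_{[k]}^{*n}$ and reference law that of $\That_n$ (stationarity under $\hat{K}_n$ being Theorem~\ref{thm:planarstationarity}) yields, via Lemma~\ref{lemma:intertwining}, that the projected process $T_{[k]}^{*n}(m):=\hat{\pi}_{[k]}^{*n}(\That_n(m))$ is a Markov chain on $\T_{[k]}^{*n}$ with transition kernel $K_k^{*n}=\hat{\Pi}_k^{*n}\hat{K}_n\hat{\pi}_{[k]}^{*n}$, started at $\tree^*$.

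Second, I would apply Proposition~\ref{prop:intertwining_kemenysnell}(i) to the collapsed chain $(T_{[k]}^{*n}(m))_{m\in\N_0}$ with the projection $h\colon\T_{[k]}^{*n}\to\T_{[k]}^{\bullet n}$. The required Kemeny-Snell identity~\eqref{eq:kemeneysnell} for $K_k^{*n}$ is exactly what Lemma~\ref{lemma:kemeny-snell} provides: starting the collapsed chain at $\tree_1^*$ is by definition the same as starting the semi-planar chain from $\hat{\Pi}_k^{*n}(\tree_1^*,\cdot)$, which projects under $\hat{\pi}$ to the conditional law of the $(\alpha,\gamma)$-chain given $\pi_{[k]}^{*n}=\tree_1^*$, and so the one-step probability of landing in a fibre $h^{-1}(\{\tree^\bullet\})$ is precisely the quantity whose invariance under $h$ is asserted in the lemma. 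Consequently $(h(T_{[k]}^{*n}(m)))_{m\in\N_0}=(T_{[k]}^{\bullet n}(m))_{m\in\N_0}$ is a Markov chain on $\T_{[k]}^{\bullet n}$ started at $h(\tree^*)=\tree^\bullet$.

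Finally, the transition kernel of the projected chain is obtained by composing $K_k^{*n}$ with $h$ and $H$; using $H\hat{\Pi}_k^{*n}=\hat{\Pi}_k^{\bullet n}$ and $h\,\hat{\pi}_{[k]}^{*n}=\hat{\pi}_{[k]}^{\bullet n}$ this yields $\hat{\Pi}_k^{\bullet n}\hat{K}_n\hat{\pi}_{[k]}^{\bullet n}=K_k^{\bullet n}$, the decorated $(\alpha,\gamma)$-chain kernel of Definition~\ref{def:decorated_alphagamma_chain}. The main obstacle has already been absorbed into Lemmas~\ref{lemma:intertwining} and~\ref{lemma:kemeny-snell}; the remaining subtlety is the careful bookkeeping of initial distributions so that both criteria apply simultaneously, which is precisely why the hypothesis asks for $\That_n(0)\sim\hat{\Pi}_k^{\bullet n}(\tree^\bullet,\cdot)$ rather than a deterministic semi-planar initial state. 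Theorem~\ref{thm:nonplanar_alphagamma_chain} follows as the special case $k=n-1$ combined with a further immediate projection, and Corollary~\ref{cor:consistency_stationarity} follows by observing that the stationary law of the semi-planar chain pushes forward to the appropriate $\hat{\Pi}_k^{\bullet n}(\cdot,\cdot)$-mixture on each level.
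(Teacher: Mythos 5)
Your proposal is correct and follows essentially the same route as the paper: factor the initial kernel as $\hat{\Pi}_k^{\bullet n}=H\,\hat{\Pi}_k^{*n}$, apply the intertwining criterion (Proposition~\ref{prop:intertwining_kemenysnell}) with Lemma~\ref{lemma:intertwining} to obtain the collapsed chain on $\T_{[k]}^{*n}$, then apply the Kemeny--Snell criterion with Lemma~\ref{lemma:kemeny-snell} to descend via $h$ to $\T_{[k]}^{\bullet n}$. The only quibble is in your closing aside: the paper recovers Theorem~\ref{thm:nonplanar_alphagamma_chain} as the case $k=n$, not $k=n-1$.
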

\begin{rem}
  Note that the case $k = n$ is essentially Theorem~\ref{thm:nonplanar_alphagamma_chain}.
\end{rem}
\begin{proof}
  This follows from the diagram in Figure~\ref{fig:commutativediagram_complete} commuting by Lemmas~\ref{lemma:kemeny-snell} and~\ref{lemma:intertwining}. 
  Sampling $\That_n(0)$ from $\hat{\Pi}_k^{\bullet n} (\tree^\bullet, \cdot)$ corresponds to first sampling a collapsed tree $T_k^{*n}(0)$ from $H$, defined in~\ref{eq:kernel_collapsed_decorated}, and subsequently sampling $\That_n(0)$ from $\hat{\Pi}_{k}^{* n} \left( T_k^{* n}(0), \cdot \right)$ by definition of the kernels.
  To obtain $T_{[k]}^{\bullet n}(1)$, one can now perform one step of the collapsed $(\alpha, \gamma)$-chain and then project the result to $\T_{[k]}^{\bullet n}$ using the function $h$.
  From Lemma~\ref{lemma:intertwining}, the intertwining criterion, Proposition~\ref{prop:intertwining_kemenysnell}\ref{prop:intertwining_criterion}, is satisfied for semi-planar $(\alpha, \gamma)$-chain, the function $\hat{\pi}_{[k]}^{* n}$ and the Markov kernel $\hat{\Pi}_{[k]}^{* n}$, and hence we obtain the collapsed $(\alpha, \gamma)$-chain on $\T_{[k]}^{* n}$ with initial distribution given by $T_k^{* n}(0)$.
  Finally, the Kemeny-Snell criterion, Proposition~\ref{prop:intertwining_kemenysnell}\ref{prop:kemenysnell_criterion}, and Lemma~\ref{lemma:kemeny-snell} ensures that projecting the collapsed $(\alpha, \gamma)$-chain with initial state $T_k^{* n}(0)$ to $\T_{[k]}^{\bullet n}$ using the function $h$ yields a Markov chain on $\T_{[k]}^{\bullet n}$ with transition kernel characterized in Definition~\ref{def:decorated_alphagamma_chain} started from $\tree^\bullet$.
  This proves the theorem.
\end{proof}
\begin{proof}[Proof of Theorem~\ref{thm:projection_decorated_chain}]
  This now follows immediately from Theorem~\ref{thm:decorated_chain_projection_theorem} and Theorem~\ref{thm:nonplanar_alphagamma_chain}.
\end{proof}

\bibliographystyle{abbrvnat}
\bibliography{bibliography}

\begin{thebibliography}{28}
\providecommand{\natexlab}[1]{#1}
\providecommand{\url}[1]{\texttt{#1}}
\expandafter\ifx\csname urlstyle\endcsname\relax
  \providecommand{\doi}[1]{doi: #1}\else
  \providecommand{\doi}{doi: \begingroup \urlstyle{rm}\Url}\fi

\bibitem[Aldous(1991)]{RefWorks:doc:5b72ecc3e4b0bc0f31737130}
D.~Aldous.
\newblock The continuum random tree. {I}.
\newblock \emph{Ann. Probab.}, 19\penalty0 (1):\penalty0 1--28, 1991.

\bibitem[Aldous(1985)]{MR883646}
D.~J. Aldous.
\newblock Exchangeability and related topics.
\newblock In \emph{\'{E}cole d'\'{e}t\'{e} de probabilit\'{e}s de
  {S}aint-{F}lour, {XIII}---1983}, volume 1117 of \emph{Lecture Notes in
  Math.}, pages 1--198. Springer, Berlin, 1985.

\bibitem[Aldous(1999)]{aldousweb}
D.~J. Aldous.
\newblock Wright-fisher diffusions with negative mutation rate!
\newblock \url{http://www.stat.berkeley.edu/~aldous/Research/OP/fw.html}, 1999.

\bibitem[Aldous(2000)]{RefWorks:doc:5b4cbc14e4b04428cc72cf41}
D.~J. Aldous.
\newblock Mixing time for a {M}arkov chain on cladograms.
\newblock \emph{Combin. Probab. Comput.}, 9\penalty0 (3):\penalty0 191--204,
  2000.

\bibitem[Blei et~al.(2010)Blei, Griffiths, and Jordan]{MR2606082}
D.~M. Blei, T.~L. Griffiths, and M.~I. Jordan.
\newblock The nested {C}hinese restaurant process and {B}ayesian nonparametric
  inference of topic hierarchies.
\newblock \emph{J. ACM}, 57\penalty0 (2):\penalty0 Art. 7, 30, 2010.

\bibitem[Borodin and Olshanski(2009)]{MR2480792}
A.~Borodin and G.~Olshanski.
\newblock Infinite-dimensional diffusions as limits of random walks on
  partitions.
\newblock \emph{Probab. Theory Related Fields}, 144\penalty0 (1-2):\penalty0
  281--318, 2009.

\bibitem[Chen et~al.(2009)Chen, Ford, and
  Winkel]{RefWorks:doc:5b4cbb5fe4b02dc0c79270af}
B.~Chen, D.~Ford, and M.~Winkel.
\newblock A new family of {M}arkov branching trees: the alpha-gamma model.
\newblock \emph{Electron. J. Probab.}, 14:\penalty0 no. 15, 400--430, 2009.

\bibitem[Ford(2006)]{RefWorks:doc:5b76ce32e4b0820c421f301d}
D.~J. Ford.
\newblock \emph{Probabilities on cladograms: {I}ntroduction to the alpha
  model}.
\newblock ProQuest LLC, Ann Arbor, MI, 2006.
\newblock Thesis (Ph.D.)--Stanford University.

\bibitem[Forman et~al.(2018{\natexlab{a}})Forman, Pal, Rizzolo, and
  Winkel]{RefWorks:doc:5b4cbc93e4b07f5746e47014}
N.~Forman, S.~Pal, D.~Rizzolo, and M.~Winkel.
\newblock Projections of the {A}ldous chain on binary trees: {I}ntertwining and
  consistency.
\newblock \href{https://arxiv.org/abs/1802.00862}{arXiv:1802.00862},
  2018{\natexlab{a}}.

\bibitem[Forman et~al.(2018{\natexlab{b}})Forman, Pal, Rizzolo, and
  Winkel]{RefWorks:doc:5b720899e4b0874a74e5a439}
N.~Forman, S.~Pal, D.~Rizzolo, and M.~Winkel.
\newblock Interval partition evolutions with emigration related to the {A}ldous
  diffusion.
\newblock \href{https://arxiv.org/abs/1804.01205}{arXiv:1804.01205},
  2018{\natexlab{b}}.

\bibitem[Forman et~al.(2018{\natexlab{c}})Forman, Pal, Rizzolo, and
  Winkel]{forman2018aldous}
N.~Forman, S.~Pal, D.~Rizzolo, and M.~Winkel.
\newblock Aldous diffusion {I}: a projective system of continuum $k$-tree
  evolutions.
\newblock \href{https://arxiv.org/abs/1809.07756}{arXiv:1809.07756},
  2018{\natexlab{c}}.

\bibitem[Forman et~al.(2019{\natexlab{a}})Forman, Pal, Rizzolo, and
  Winkel]{RefWorks:doc:5b720899e4b0fd36f5bb93a6}
N.~Forman, S.~Pal, D.~Rizzolo, and M.~Winkel.
\newblock Diffusions on a space of interval partitions: {P}oisson-{D}irichlet
  stationary distributions.
\newblock \href{https://arxiv.org/abs/1910.07626}{arXiv:1910.07626},
  2019{\natexlab{a}}.
\newblock To appear in Ann. Probab.

\bibitem[Forman et~al.(2019{\natexlab{b}})Forman, Pal, Rizzolo, and
  Winkel]{forman2019diffusions}
N.~Forman, S.~Pal, D.~Rizzolo, and M.~Winkel.
\newblock Diffusions on a space of interval partitions: construction from
  marked {L}évy processes.
\newblock \href{https://arxiv.org/abs/1909.02584}{arXiv:1909.02584},
  2019{\natexlab{b}}.
\newblock To appear in Electron. J. Probab.

\bibitem[Forman et~al.(2020)Forman, Pal, Rizzolo, and
  Winkel]{forman2018projections}
N.~Forman, S.~Pal, D.~Rizzolo, and M.~Winkel.
\newblock Projections of the {A}ldous chain on binary trees: {I}ntertwining and
  consistency.
\newblock \emph{Random Structures \& {A}lgorithms}, 2020.

\bibitem[Fulman(2005)]{MR2095623}
J.~Fulman.
\newblock Stein's method and {P}lancherel measure of the symmetric group.
\newblock \emph{Trans. Amer. Math. Soc.}, 357\penalty0 (2):\penalty0 555--570,
  2005.

\bibitem[Fulman(2009)]{MR2480787}
J.~Fulman.
\newblock Commutation relations and {M}arkov chains.
\newblock \emph{Probab. Theory Related Fields}, 144\penalty0 (1-2):\penalty0
  99--136, 2009.

\bibitem[Haas and Miermont(2012)]{haas2012}
B.~Haas and G.~Miermont.
\newblock Scaling limits of markov branching trees with applications to
  {G}alton–{W}atson and random unordered trees.
\newblock \emph{Ann. Probab.}, 40\penalty0 (6):\penalty0 2589--2666, 2012.

\bibitem[Kemeny and Snell(1960)]{MR0115196}
J.~G. Kemeny and J.~L. Snell.
\newblock \emph{Finite {M}arkov chains}.
\newblock The University Series in Undergraduate Mathematics. D. Van Nostrand
  Co., Inc., Princeton, N.J.-Toronto-London-New York, 1960.

\bibitem[L\"{o}hr et~al.(2018)L\"{o}hr, Mytnik, and
  Winter]{RefWorks:doc:5b7200a0e4b06af4d4ef714a}
W.~L\"{o}hr, L.~Mytnik, and A.~Winter.
\newblock The {A}ldous chain on cladograms in the diffusion limit.
\newblock \href{https://arxiv.org/abs/1805.12057}{arXiv:1805.12057}, 2018.
\newblock To appear in Ann. Probab.

\bibitem[Marchal(2008)]{RefWorks:doc:5b6c561fe4b06c0731a5c558}
P.~Marchal.
\newblock A note on the fragmentation of a stable tree.
\newblock In \emph{Fifth {C}olloquium on {M}athematics and {C}omputer
  {S}cience}, Discrete Math. Theor. Comput. Sci. Proc., AI, pages 489--499.
  Assoc. Discrete Math. Theor. Comput. Sci., Nancy, 2008.

\bibitem[Nussbaumer and Winter(2020)]{nussbaumer2020algebraic}
J.~Nussbaumer and A.~Winter.
\newblock The algebraic $\alpha$-{F}ord tree under evolution.
\newblock \href{https://arxiv.org/abs/2006.09316}{arXiv:2006.09316}, 2020.

\bibitem[Pal(2013)]{RefWorks:doc:5b4cbb92e4b0bc982fe42f3a}
S.~Pal.
\newblock Wright-{F}isher diffusion with negative mutation rates.
\newblock \emph{Ann. Probab.}, 41\penalty0 (2):\penalty0 503--526, 2013.

\bibitem[Petrov(2009)]{MR2596654}
L.~A. Petrov.
\newblock A two-parameter family of infinite-dimensional diffusions on the
  {K}ingman simplex.
\newblock \emph{Funktsional. Anal. i Prilozhen.}, 43\penalty0 (4):\penalty0
  45--66, 2009.

\bibitem[Pitman(2006)]{RefWorks:doc:5b644f76e4b0a3434e490fc8}
J.~Pitman.
\newblock \emph{Combinatorial stochastic processes}, volume 1875 of
  \emph{Lecture Notes in Mathematics}.
\newblock Springer-Verlag, Berlin, 2006.

\bibitem[Pitman and Winkel(2009)]{RefWorks:doc:5b6c5580e4b0a3935d3436d8}
J.~Pitman and M.~Winkel.
\newblock Regenerative tree growth: binary self-similar continuum random trees
  and {P}oisson-{D}irichlet compositions.
\newblock \emph{Ann. Probab.}, 37\penalty0 (5):\penalty0 1999--2041, 2009.

\bibitem[R\'{e}my(1985)]{RefWorks:doc:5b71b380e4b06c0731a629f4}
J.-L. R\'{e}my.
\newblock Un proc\'{e}d\'{e} it\'{e}ratif de d\'{e}nombrement d'arbres binaires
  et son application \`a leur g\'{e}n\'{e}ration al\'{e}atoire.
\newblock \emph{RAIRO Inform. Th\'{e}or.}, 19\penalty0 (2):\penalty0 179--195,
  1985.

\bibitem[Rogers and Pitman(1981)]{MR624684}
L.~C.~G. Rogers and J.~W. Pitman.
\newblock Markov functions.
\newblock \emph{Ann. Probab.}, 9\penalty0 (4):\penalty0 573--582, 1981.

\bibitem[Schweinsberg(2002)]{RefWorks:doc:5b4cbc43e4b0185a9132aee9}
J.~Schweinsberg.
\newblock An {$O(n^2)$} bound for the relaxation time of a {M}arkov chain on
  cladograms.
\newblock \emph{Random Structures Algorithms}, 20\penalty0 (1):\penalty0
  59--70, 2002.

\end{thebibliography}

\appendix

\end{document}